\newtheorem{theorem}{Theorem}[section]
\newtheorem{lemma}[theorem]{Lemma}
\newtheorem{proposition}[theorem]{Proposition}
\newtheorem{corollary}[theorem]{Corollary}
\theoremstyle{definition}
\newtheorem{definition}[theorem]{Definition}
\newtheorem{remark}[theorem]{Remark}
\newcommand{\suchthat}{\ifnum\currentgrouptype=16 \mathrel{}\middle|\mathrel{}\else\mid\fi}
\DeclareMathOperator*{\esssup}{ess\,sup}
\DeclarePairedDelimiter{\abs}{\lvert}{\rvert}
\DeclarePairedDelimiter{\norm}{\lVert}{\rVert}
\DeclareMathOperator{\Ran}{Ran}
\DeclareMathOperator{\Span}{Span}
\DeclareMathOperator{\diag}{diag}
\DeclareMathOperator{\rank}{rank}
\newcommand{\vertii}[1]{{\left\vert\kern-0.25ex\left\vert #1 
        \right\vert\kern-0.25ex\right\vert}}
\theoremstyle{definition}
\begin{document}

\title{Approximate and exact controllability criteria for 
linear one-dimensional hyperbolic
systems }

\author{Yacine Chitour\thanks{Universit\'e Paris-Saclay, CNRS, CentraleSup\'elec, Laboratoire des signaux et syst\`emes, 91190, Gif-sur-Yvette, France.} \and Sébastien Fueyo\thanks{School of Electrical Engineering, 
   Tel Aviv University, Ramat Aviv 69978, Israel.} \and Guilherme Mazanti\thanks{Universit\'e Paris-Saclay, CNRS, CentraleSup\'elec, Inria, Laboratoire des signaux et syst\`emes, 91190, Gif-sur-Yvette, France.} \and Mario Sigalotti\thanks{Sorbonne Universit\'e, Inria, CNRS, Laboratoire Jacques-Louis Lions (LJLL), F-75005 Paris, France.}}
\maketitle

\begin{abstract}
This paper deals with the controllability of linear one-dimensional hyperbolic systems.
Reformulating the problem in terms of linear difference equations and making use of infinite-dimensional realization theory, we obtain both necessary and 
sufficient conditions 
for approximate and exact controllability, 
expressed in the frequency domain. The results are applied to flows in 
networks.

\vspace{2em}

\noindent Keywords: linear one-dimensional hyperbolic systems, boundary controls, linear difference equations, approximate controllability, exact controllability, flows in networks, realization theory.

\end{abstract}

\section{Introduction}
\label{sec:intro}
Linear one-dimensional  hyperbolic systems are frequently used to model many systems such as traffic flows or electronic circuits (see, e.g., the monograph \cite{Suarez}). The properties of such hyperbolic systems, such as their stability,  stabilizability, and controllability, have been studied intensively in the literature \cite{bastin2016stability, CoNg, CoronNguyenoptimal, GaravelloHanPiccoli, Auriol2022Robust}. In this paper, we focus on their approximate and exact controllability. More precisely, we consider the 
linear one-dimensional hyperbolic system
\begin{subequations}
\label{eq:hyperbolic}
\begin{numcases}{} 
\label{eq_hyp}
\partial_t R(t,x)+\Lambda(x) \partial_x R(t,x)+D(x)R(t,x)=0,& $t > 0,\ \  x \in (0, 1)$, \\
\label{eq_bord}
\begin{pmatrix}
R^+(t,0) \\
R^{-}(t,1)
\end{pmatrix}=M \begin{pmatrix}
R^+(t,1) \\
R^{-}(t,0)
\end{pmatrix}+Bu(t),& $t \ge 0$,
\end{numcases}
\end{subequations}
where the $n\times n$ matrices $\Lambda(x)$ and $D(x)$ are diagonal  and, for $\Lambda(x)$, with nonzero diagonal entries whose sign is independent of $x$; $R^+$ (respectively, $R^-$) gathers the components of $R$ whose corresponding diagonal element in $\Lambda(x)$ is positive (respectively, negative); 
$u\colon \mathbb R_+ \to \mathbb{R}^m$ denotes the control law; $B 
$ is an $n\times m$ real matrix; 
and $M 
$ is an $n \times n$ real matrix 
accounting for
the boundary conditions.

Our goal is to provide controllability results for System~\eqref{eq:hyperbolic} in the space $L^q([0, 1], \mathbb R^n)$ for $q \in [1, +\infty)$.
One classical strategy consists in reformulating the controllability problem into an observability inequality, as done in \cite{EngelKramar,Miller2004,Tucsnak_Weiss,Ramdani, OliveMinimal, Hu2021Minimal, HuMinimal, CoronNguyen2021, CoronNguyenoptimal}. In particular, \cite{OliveMinimal, Hu2021Minimal, HuMinimal, CoronNguyen2021, CoronNguyenoptimal} characterize the minimal time for null or exact controllability of \eqref{eq:hyperbolic} for some particular choices of matrices $M$ and $B$, but in the more general case where $D(x)$ is not necessarily diagonal. Another strategy consists in exploiting results about  
difference equations, as done in
\cite{bastin2016stability,CoNg,Chitour2016Stability,baratchart} for stability.
Here, 
as detailed in Section~\ref{sec:transformation}, we 
adopt the second strategy, applying
the method of characteristics to transform System~\eqref{eq:hyperbolic} into a linear difference 
equation of the form
\begin{equation}
\label{eq_generic_delay}
\begin{pmatrix}
 y_1(t) \\
 \vdots\\
 y_n(t)
 \end{pmatrix}=K \begin{pmatrix}
 y_1(t-\tau_1) \\
 \vdots\\
 y_n(t-\tau_n)
 \end{pmatrix}+Bu(t),\qquad \mbox{ 
 $t \ge 0$}, 
\end{equation}
where the state variable $(y_1(t),\dots,y_n(t))$ is related to the boundary values of $R(t,\cdot)$, the matrix $K$ is obtained from $M$, $\Lambda(\cdot)$, and $D(\cdot)$, and $\tau_1, \dotsc, \tau_n$ are obtained from $\Lambda(\cdot)$. We then prove (see Proposition~\ref{prop:control-equiv}) that approximate or exact controllability of \eqref{eq:hyperbolic} in $L^q([0, 1], \mathbb R^n)$ is equivalent to the same kind of controllability of \eqref{eq_generic_delay} in the state space $\Sigma^q := \prod_{i=1}^n L^q([-\tau_i,0], \allowbreak \mathbb{R})$.

ystem~\eqref{eq_generic_delay} can be seen as a special case of the more general linear difference equation
\begin{equation}
\label{system_lin_formel2}
 x(t)=\sum\limits_{j=1}^NA_jx(t-\Lambda_j)+\widetilde{B}u(t) , \qquad t \ge 0,
\end{equation}
where the positive real numbers $\Lambda_1 , \dotsc,\Lambda_N$ are the delays, $A_1,\dotsc,A_N$ are $d\times d$ real matrices, $\widetilde{B}$ is a $d \times m$ real matrix, and 
$t \mapsto u(t)$ is a $\mathbb{R}^m$-valued control law.
The exponential stability of systems of the form \eqref{system_lin_formel2} in the uncontrolled case $\widetilde{B}=0$ has been completely characterized (cf.~the monograph \cite{Hale}),
while controllability issues are less classical. 

In the paper \cite{ChitourHautus}, we gave necessary and/or sufficient conditions for the controllability of System~\eqref{system_lin_formel2}
in the state space $L^q([-\Lambda_{\rm max},0],\mathbb{R}^d)$, $q\in [1,+\infty)$, where $\Lambda_{\rm max}= \max_{j=1,\dots,N} \Lambda_j$.
  These results are expressed in the frequency domain and   
  are of Hautus type.
The results of \cite{ChitourHautus} provide necessary and/or sufficient conditions for controllability of System~\eqref{eq_generic_delay} in $L^q([-\tau_{\max},0],\mathbb{R}^n)$, where $\tau_{\max}=\max_{j=1,\dots,m}\tau_j$. 
Controllability of System~\eqref{eq_generic_delay} in $L^q([-\tau_{\max},0],\allowbreak \mathbb{R}^n)$ implies controllability in $\Sigma^q$ (which is equivalent to controllability of \eqref{eq:hyperbolic} in $L^q([0, 1], \mathbb R^n)$), and then the results of \cite{ChitourHautus} allow one to obtain sufficient controllability conditions for \eqref{eq:hyperbolic}. However, these results only yield conservative sufficient controllability conditions and, moreover, relying only on the results of \cite{ChitourHautus} does not allow one to derive necessary conditions, since controllability of System~\eqref{eq_generic_delay} in $\Sigma^q$ does not imply controllability in $L^q([-\tau_{\max},0],\mathbb{R}^n)$, as shown by the following example.

Consider \eqref{eq_generic_delay} in the case $n = 2$, $(\tau_1, \tau_2) = (1, \tau)$ with $\tau \in (0, 1)$, and
\[
K = \begin{pmatrix}
0 & 1 \\
1 & 0 \\
\end{pmatrix}, \qquad B = \begin{pmatrix} 0 \\ 1 \end{pmatrix}, 
\]
that is,
\begin{equation}
\label{eq:intro-example}
\left\{
\begin{aligned}
y_1(t) & = y_2(t - \tau), \\
y_2(t) & = y_1(t - 1) + u(t).
\end{aligned}
\right.
\end{equation}
It follows from \cite[Theorem~3.6]{ChitourHautus} that \eqref{eq:intro-example} is never approximately controllable in $L^q([-1, 0], \mathbb R^2)$. This can also be verified by a direct argument given next. For $t \geq 0$, let $y_{[t]}$ denote the history function of \eqref{eq:intro-example}, i.e., $y_{[t]}(s) = y(t + s)$ for $s \in [-1, 0]$. Then the first equation of \eqref{eq:intro-example} shows that, for all $t \geq 1 - \tau$, $y_{[t], 1}\rvert_{[\tau-1, 0]}(\cdot + \tau) = y_{[t], 2}\rvert_{[-1, -\tau]}$, and hence \eqref{eq:intro-example} cannot be exactly nor approximately controllable in $L^q([-1, 0], \mathbb R^2)$. 

On the other hand, let us check that \eqref{eq:intro-example} is exactly controllable in $\Sigma^q$ in time $1 + \tau$. Given an initial condition $(\phi_0, \psi_0)$ and a target state $(\phi_1, \psi_1)$, both in $\Sigma^q$, if $y_1(s) = \phi_0(s)$ for $s \in [-1, 0]$ and $y_2(s) = \psi_0(s)$ for $s \in [-\tau, 0]$, by choosing $u(t) = \phi_1(t - 1) - \phi_0(t - 1)$ for $t \in [0, 1]$ and $u(t) = \psi_1(t - 1 - \tau) - \psi_0(t - 1 - \tau)$ for $t \in [1, 1 + \tau]$, it is immediate to check that $y_1(1 + \tau + s) = \phi_1(s)$ for $s \in [-1, 0]$ and $y_2(1 + \tau + s) = \psi_1(s)$ for $s \in [-\tau, 0]$, yielding exact controllability in $\Sigma^q$ in time $1+\tau$.

At the light of the previous example, the results of \cite{ChitourHautus} are not sufficient for an extensive comprehension of the controllability of System~\eqref{eq:hyperbolic} and require additional work, which is the purpose of the present paper. Our first result is that approximate (respectively, exact) controllability from the origin of System~\eqref{eq_generic_delay} is equivalent to 
its approximate (respectively, exact) controllability in time $\tau_1+\dots+\tau_n$, thus providing an upper bound for the minimal time for controllability. This result is the counterpart of \cite[Theorem~4.7]{ChitourHautus} and requires a finer analysis, exploiting the particular structure of the difference equation \eqref{eq_generic_delay} with respect to the general one given in \eqref{system_lin_formel2}.
We then base our analysis on the realization theory for infinite dimensional linear systems developed by Y.~Yamamoto \cite{yamamoto1981realizationI,yamamoto1981realizationII,Yutaka_Yamamoto}.
The key point for applying this framework 
is the identification of an
input-output system equivalent to \eqref{eq_generic_delay} that is compatible with the functional space $\Sigma^q$.
As a consequence, we are able to obtain a Hautus-type necessary and sufficient condition for the approximate controllability 
in time $\tau_1+\dots+\tau_n$ of System~\eqref{eq:hyperbolic} in $L^q([0, 1], \mathbb R^n)$, $q\in [1,+\infty)$. 
We also derive a Hautus-type necessary and sufficient condition
for the exact controllability 
in time $\tau_1+\dots+\tau_n$ of System~\eqref{eq:hyperbolic} in $L^1([0, 1], \mathbb R^n)$. Note that Hautus criteria 
for controllability or observability
have been obtained in the literature for some classes of infinite-dimensional systems, such as systems with skew-adjoint generators in \cite{Miller2004} or exponentially stable systems in \cite{Jacob2009Hautus, Russell1994General}.

In the final part of the paper, 
as a special case of 
linear one-dimensional hyperbolic system, we consider flows in networks, namely dynamical graphs where the  edges are seen as transport equations with suitable linear static conditions at the vertices (see \cite{KramarSikolya2005,EngelKramar,Gantouh2020ApproximateCO,Sikolya}). 
Note that all these works rely on  observability inequalities and not on a difference equation
approach like ours. 
Applying our controllability results for System~\eqref{eq:hyperbolic} to flows in networks, we improve the existing results in the literature in two directions. On the one hand, we identify a topological obstruction for controllability, namely the graph must be the finite union of cycle graphs for (approximate or exact) controllability to hold true. On the other hand, when the graph is a finite union of cycle graphs, we also precise the Hautus criteria for controllability, noticing that it reduces to a Kalman criterion in the case where the delays are commensurable.

The paper is organized as follows.
 In Section~\ref{sec:notation}, we gather the main notation used throughout the paper. Section~\ref{sec:results}
provides first the complete description of System~\eqref{eq:hyperbolic} and of the one-to-one correspondence with System~\eqref{eq_generic_delay}, enabling one to define a general concept of solution for System~\eqref{eq:hyperbolic}. We then introduce a representation formula for solutions of System~\eqref{eq_generic_delay}, which is used in Section~\ref{sec:upper-bound-time} to derive the upper bound $\tau_1+\cdots+\tau_n$ for the minimal time of controllability. We finally give  several definitions of controllability both for System~\eqref{eq:hyperbolic} and System~\eqref{eq_generic_delay}, which are equivalent thanks to the transformation defined previously. Section~\ref{sec:control_delay_system} is devoted to the controllability criteria, 
obtained by applying Yamamoto's realization theory.
 Finally, Section~\ref{sec:appli} focuses on the case of flows in networks, introducing their general formulation, obtaining the topological obstruction for their controllability, and applying to them the general Hautus test.

\section{Notation}
\label{sec:notation}

In this paper, $\mathbb{N}$ represents the set of nonnegative integers. The sets of real, complex, nonnegative and nonpositive real numbers are denoted by $\mathbb{R}$, $\mathbb{C}$, $\mathbb{R}_+$ and $\mathbb{R}_-$ respectively. 
For $p \in \mathbb{C}$, $\Re(p)$ and $\Im(p)$ represent the real and imaginary parts of $p$. For two integers $a, b$ with $a \leq b$, we use $\llbracket a, b\rrbracket$ to denote the interval of integers between $a$ and $b$, i.e., $\llbracket a, b\rrbracket = [a, b] \cap \mathbb Z$, and, if $a>b$, $\llbracket a, b\rrbracket$ is understood to be the empty set.
Given $\ell=(\ell_1,\dots,\ell_n) \in \mathbb{N}^n$,  the length of the 
 $n$-tuple $\ell$ is denoted by $|\ell|$ and is equal to $\ell_1+ \dots + \ell_n$.

Given two positive integers $n$ and $m$, $\mathcal{M}_{n,m}(\mathbb{K})$ is the set of $n \times 
m$ matrices with coefficients in $\mathbb{K}=$ $\mathbb{R}$ or $ \mathbb{C}$.  For $A \in 
\mathcal{M}_{n,m}(\mathbb{K})$, we denote by $A^\ast$ its conjugate transpose matrix. 
We use 
$\|\cdot\|$ to denote a norm for every finite-dimensional space (over $\mathbb{K}$).

Elements 
$x\in \mathbb{K}^n$ are considered as column vectors, hence the transposition $x^T$ of $x$ is a row vector. 
Given $x,y\in \mathbb{K}^n$, 
we denote $x\cdot y=x^T y$. 
For $x \in  \mathbb{K}^n$, we use $\diag\,(x_1,\dots,x_n)$ to denote the diagonal matrix whose diagonal entries are the components of $x$, and we extend this notation for block-diagonal matrices when $x_1, \dotsc, x_n$ are square matrices. The identity matrix in $\mathcal{M}_{n,n}(\mathbb{K})$ is denoted by $I_n$. For $M \in \mathcal{M}_{n,m}(\mathbb{K})$, $\rank M$ denotes the 
rank of $M$, i.e., the dimension of the linear space over $\mathbb{K}$ spanned by its columns. 
Given a positive integer $k$, $A \in \mathcal{M}_{n,m}(\mathbb{K})$, and $B\in  \mathcal{M}_{n,k}(\mathbb{K})$, the bracket $\left[A,B\right]$ denotes the juxtaposition of the two matrices, which hence belongs to $\mathcal{M}_{n,m+k}(\mathbb{K})$.

Given $q \in [1,+\infty]$, we endow  $L^q_{\rm loc}\left(\mathbb{R}_+,\mathbb{R}^k\right)$
with the topology induced by the  semi-norms
\[
\|\phi \|_{[0,a],\,q}:=
\begin{cases}
    \left(\int_0^a \vertii{\phi(t)}^q dt  \right)^{1/q},&\mbox{ if }q\in [1,+\infty),\\
    \esssup \{\|\phi(t)\| \mid t\in [0,a]\},&\mbox{ if }q=+\infty,
\end{cases}
\qquad \text{for } a> 0.
\]
Similarly, given
an interval $I$ of $\mathbb{R}$, $L^q(I,\mathbb{R}^k)$ is 
  endowed of the norm $\| \cdot \|_{I,q}$. 
The space $\Sigma^q=\prod_{j=1}^n L^q\left([-\tau_i,0],\mathbb{R}\right)$, with $\tau_1,\dots,\tau_n>0$,
is endowed with the product norm, denoted by $\|\cdot \|_{\Sigma^q}$.

  For a linear operator $L$, 
 we denote by $\Ran L$ its range and by $ \overline{\Ran L}$ the closure of its 
 range. Similarly, if $F$ is a matrix-valued holomorphic function,  we use 
 $F(\mathbb{C})$ and $\overline{F(\mathbb{C})}$ to denote its image and the closure of its image, respectively.

We define $M_+(\mathbb{R})$ to be the space of Radon measures on $\mathbb{R}$ whose support left-bounded, and we let $M(\mathbb{R}_{-})$ denote the subset of $M_+(\mathbb R)$ made of the Radon measures whose support is  compact and contained in $\mathbb{R}_{-}$. We denote by $*$ the convolution in $M_+(\mathbb{R})$,
which is then seen as a convolution algebra. Given
 a Radon measure
$\mu \in M_+(\mathbb{R})$, we denote by $\widehat{\mu}(p)$ the Laplace transform of $\mu$ at the frequency $p\in \mathbb{C}$, that is, 
\begin{equation}
\label{laplace_transform_radon_measure}
\widehat{\mu}(p)=\int_{-\infty}^{+\infty} d\mu(t)e^{-pt},
\end{equation}
 provided that the previous integral 
 exists.
 The Dirac distribution at $x \in \mathbb{R}$ is denoted by  $\delta_x \in M_+(\mathbb{R})$.
Notice that the Laplace transform of
$\delta_x$ is the holomorphic map 
\begin{equation}
\label{laplace_transform_dirac}
\widehat{\delta}_x(p)=e^{-p x},\qquad p \in \mathbb{C}.
\end{equation}

\section{Controllability problems for 
linear one-dimensional hyperbolic systems}
\label{sec:results}\label{sec:problems}

We consider System~\eqref{eq:hyperbolic}
where

\begin{itemize}[label={--}]

\item
$\Lambda(x) = \diag\{\lambda_1(x), \dotsc, \lambda_n(x)\}$ and there exists $\tilde n \in \{0, \dotsc, n\}$ such that
\[
\lambda_i(x) >0 > \lambda_j(x) \qquad \text{ for every } i \in  \{1, \dotsc, \tilde n\}
,\ j \in\{\tilde n + 1, \dotsc, n\},\ x \in [0, 1],
\]
and, in addition, $\lambda_i$ and $\frac{1}{\lambda_i}$ belong to $L^\infty((0, 1), \mathbb R)$ for every $i \in \{1, \dotsc, n\}$;

\item
the components of the solution $R$ are split into those corresponding to positive and negative velocities, i.e., 
\[
R= \begin{pmatrix}
 R^+ \\
 R^-
 \end{pmatrix}\quad\mbox{with}\ 
 \left\{ \begin{array}{rcl}
    R^+ &=& (R_1, \dots, R_{\tilde n})^T, \\
    R^- &=& (R_{\tilde n+1}, \dots, R_n)^T.
  \end{array} \right.
\]

\item
$D(x) = \diag \{d_1(x), \dotsc, d_n(x)\}$ and $d_i \in L^1((0, 1), \mathbb R)$ for every $i \in \{1, \dotsc, n\}$.

\end{itemize}

Notice that the choice of taking every transport equation on the interval $[0,1]$ corresponds to a choice of normalization. One could equivalently consider each transport equation on an arbitrary space interval and recover \eqref{eq:hyperbolic} by  rescaling the speeds $\lambda_1,\dots,\lambda_n$.

\subsection{Transformation into linear difference equations and well-po\-sed\-ness}\label{sec:transformation}

In this section we start by providing a definition of solution to System~\eqref{eq:hyperbolic}, directly inspired by the method of characteristics and which 
does not require any a priori regularity on the solution.
The definition below is an immediate generalization of  \cite[Definition~4.1]{Chitour2016Stability}.

\begin{definition}
\label{def:sol-EDP}
Let $T > 0$, $u\colon [0, T] \to \mathbb R^m$, and $\bar R\colon [0, 1] \to \mathbb R^n$. We say that $R \colon [0, T] \times [0, 1] \to \mathbb R^n$ is a \emph{solution} of \eqref{eq:hyperbolic} in $[0, T]$ with initial condition $\bar R$ and control $u$ if $R(0, x) = \bar R(x)$ for every $x \in [0, 1]$, \eqref{eq_bord} is satisfied for every $t \geq 0$, and, for every $i \in \llbracket 1, n\rrbracket$, $t \in [0, T]$, and $x \in [0, 1]$, we have
\begin{equation}
\label{eq:weak-sol}
R_i\left(t + \int_x^{x + h} \frac{d\xi}{\lambda_i(\xi)}, x + h\right) = e^{-\int_{x}^{x + h} \frac{d_i(\xi)}{\lambda_i(\xi)} d\xi} R_i(t, x)
\end{equation}
for every $h \in \mathbb R$ such that $t + \int_x^{x + h} \frac{d\xi}{\lambda_i(\xi)} \in [0, T]$ and $x + h \in [0, 1]$.
\end{definition}

\begin{remark}
A similar concept of solution for systems under a form more general than \eqref{eq:hyperbolic} has been given in \cite[Definition~3.1]{CoronNguyenoptimal}, where it is referred to as \emph{broad solution}. 
\end{remark}

\begin{remark}
The motivation for the notion of solution provided in Definition~\ref{def:sol-EDP} is that \eqref{eq:weak-sol} characterizes classical solutions of \eqref{eq_hyp}. More precisely, given any $T>0$, if $R \in C^1((0, T) \times (0, 1), \mathbb R^n) \cap C^0([0, T] \times [0, 1], \mathbb R^n)$ satisfies \eqref{eq:weak-sol} for every $i \in \llbracket 1, n\rrbracket$, $t \in [0, T]$, and $x \in [0, 1]$, then, by differentiating \eqref{eq:weak-sol} with respect to $h$ at $h = 0$, we deduce that \eqref{eq_hyp} is satisfied on $(0, T) \times (0, 1)$. Conversely, if $R \in C^1((0, T) \times (0, 1), \mathbb R^n) \cap C^0([0, T] \times [0, 1], \mathbb R^n)$ satisfies \eqref{eq_hyp} on $(0, T) \times (0, 1)$, an application of the method of characteristics shows that \eqref{eq:weak-sol} is satisfied for every $i \in \llbracket 1, n\rrbracket$, $t \in [0, T]$, and $x \in [0, 1]$.
\end{remark}

We now relate solutions of \eqref{eq:hyperbolic} with those of the linear difference equation \eqref{eq_generic_delay}. For that purpose, set
\[
K = M \diag\left\{e^{-\int_0^1 \frac{d_1(x)}{\abs{\lambda_1(x)}} d x}, \dotsc, e^{-\int_0^1 \frac{d_n(x)}{\abs{\lambda_n(x)}} d x} \right\},
\]
and, for $i \in \llbracket 1, n\rrbracket$, define $\psi_i\colon [0, 1] \to \mathbb R$ by
\[
\psi_i(x) = \begin{dcases*}
\int_0^x \frac{d\xi}{\lambda_i(\xi)} & if $i \in \llbracket 1, \tilde n\rrbracket$, \\
\int_x^1 \frac{d\xi}{\abs{\lambda_i(\xi)}} & if $i \in \llbracket \tilde n + 1, n \rrbracket$.
\end{dcases*}
\]
Note that $\psi\colon [0,1] \to [0,\tau_i]$ is an homeomorphism for $i\in \llbracket 1,n\rrbracket$, where one sets
\begin{equation}\label{eq:taui}
\tau_i = \int_0^1 \frac{dx}{\abs{\lambda_i(x)}},\qquad i \in \llbracket 1, n \rrbracket.
\end{equation}

We use here the definition of solution of \eqref{eq_generic_delay} from \cite[Definition~2.1]{Chitour2020Approximate} and \cite[Definition~2.1]{Mazanti2017Relative}, which, similarly to Definition~\ref{def:sol-EDP}, does not require any a priori regularity on the solution. More precisely, the family $y_i\colon [-\tau_i, T] \to \mathbb R$, $i \in \llbracket 1, n\rrbracket$, is said to be a \emph{solution of \eqref{eq_generic_delay}} in $[0, T]$ with a given control $u\colon [0, T] \to \mathbb R^m$ if \eqref{eq_generic_delay} is satisfied for every $t \in [0, T]$. In that case, the family $\bar y_{i}\colon [-\tau_i, 0) \to \mathbb R$, $i \in \llbracket 1, n\rrbracket$, where $\bar y_{i}$ is the restriction of $y_i$ to $[-\tau_i, 0)$, is said to be the \emph{initial condition} of the solution $(y_i)_{i \in \llbracket 1, n\rrbracket}$.

We thus have the following result.

\begin{proposition}
\label{prop:equiv-edp-diff}
Let $R \colon [0, T] \times [0, 1] \to \mathbb R^n$ be a solution of \eqref{eq:hyperbolic} in $[0, T]$ with initial condition $\bar R$ and control $u\colon [0, T] \to \mathbb R^m$. For $i \in \llbracket 1, \tilde n\rrbracket$, define $y_i\colon [-\tau_i, T] \to \mathbb R$ by
\begin{subequations}
\label{eq:y-from-R}
\begin{equation}
y_i(t) = \begin{dcases*}
R_i(t, 0) & if $t \geq 0$, \\
e^{\int_0^{\psi_i^{-1}(-t)} \frac{d_i(\xi)}{\lambda_i(\xi)} d\xi} \bar R_i(\psi_i^{-1}(-t)) & if $-\tau_i \leq t < 0$,
\end{dcases*}
\end{equation}
and, for $i \in \llbracket \tilde n + 1, n\rrbracket$, define $y_i\colon [-\tau_i, T] \to \mathbb R$ by
\begin{equation}
y_i(t) = \begin{dcases*}
R_i(t, 1) & if $t \geq 0$, \\
e^{\int_{\psi_i^{-1}(-t)}^1 \frac{d_i(\xi)}{\abs{\lambda_i(\xi)}} d\xi} \bar R_i(\psi_i^{-1}(-t)) & if $-\tau_i \leq t < 0$.
\end{dcases*}
\end{equation}
\end{subequations}
Then $(y_1,\dots,y_n)$ is a solution of \eqref{eq_generic_delay} in $[0, T]$ with control $u$.

Conversely, let 
the family $y_i\colon [-\tau_i, T] \to \mathbb R$, $i \in \llbracket 1, n\rrbracket$, be a solution of \eqref{eq_generic_delay} in $[0, T]$ with control $u\colon [0, T] \to \mathbb R^m$. For  $i \in \llbracket 1, \tilde n\rrbracket$, define $R_i\colon [0, T] \times [0,1]\to \mathbb R$ by
\begin{subequations}
\label{eq:R-from-y}
\begin{equation}
R_i(t, x) = e^{-\int_0^x \frac{d_i(\xi)}{\lambda_i(\xi)} d\xi} y_i(t - \psi_i(x)),
\end{equation}
and, for $i \in \llbracket \tilde n + 1, n\rrbracket$, define $R_i\colon [0, T] \times [0,1] \to \mathbb R$ by
\begin{equation}
R_i(t, x) = e^{-\int_x^1 \frac{d_i(\xi)}{\abs{\lambda_i(\xi)}} d\xi} y_i(t - \psi_i(x)).
\end{equation}
\end{subequations}
Then $R$ is a solution of \eqref{eq:hyperbolic} in $[0, T]$ 
with control $u$.
\end{proposition}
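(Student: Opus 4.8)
The plan is to establish both implications by direct substitution, the common mechanism being that the characteristic identity \eqref{eq:weak-sol} transports a boundary value of $R_i$ across the whole space interval $[0,1]$ with precisely the weight $e^{-\int_0^1 \frac{d_i(\xi)}{\abs{\lambda_i(\xi)}}d\xi}$ that is built into both the matrix $K$ and the change of unknowns \eqref{eq:y-from-R}--\eqref{eq:R-from-y}. Throughout I would use two elementary facts: the additivity $\psi_i(x+h)=\psi_i(x)+\int_x^{x+h}\frac{d\xi}{\lambda_i(\xi)}$ for $i\le\tilde n$ (with the sign-reversed analogue for $i>\tilde n$), together with the corresponding additivity of the exponents $x\mapsto\int_0^x\frac{d_i(\xi)}{\lambda_i(\xi)}d\xi$; and the fact, recorded just after the definitions, that each $\psi_i$ is a homeomorphism of $[0,1]$ onto $[0,\tau_i]$, so that $\psi_i^{-1}$ is available wherever it is used.

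For the direct implication, I would fix $t\in[0,T]$ and read \eqref{eq_generic_delay} componentwise. Since $y_i(t)=R_i(t,0)$ for $i\le\tilde n$ and $y_i(t)=R_i(t,1)$ for $i>\tilde n$, the left-hand side of \eqref{eq_generic_delay} at time $t$ is exactly the left-hand side of the boundary relation \eqref{eq_bord}. It then suffices to prove that the $j$-th entry of $\bigl(R^+(t,1),R^-(t,0)\bigr)$ equals $e^{-\int_0^1\frac{d_j}{\abs{\lambda_j}}}\,y_j(t-\tau_j)$ for every $j$: multiplying by $M$, adding $Bu(t)$, and invoking $K=M\diag\{e^{-\int_0^1\frac{d_i}{\abs{\lambda_i}}}\}$ turns \eqref{eq_bord} into \eqref{eq_generic_delay}. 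When $t\ge\tau_j$ this equality is obtained by applying \eqref{eq:weak-sol} to the $j$-th characteristic from $x=0$ to $x=1$ (for $j\le\tilde n$) or from $x=1$ to $x=0$ (for $j>\tilde n$), then using the definition of $y_j$ on $[0,T]$. When $0\le t<\tau_j$, the relevant characteristic meets $\{t=0\}$ at the interior point $x_0=\psi_j^{-1}(\tau_j-t)$; applying \eqref{eq:weak-sol} along that characteristic expresses the boundary value of $R_j$ at time $t$ through $\bar R_j(x_0)$ with weight $e^{-\int_{x_0}^1\frac{d_j}{\abs{\lambda_j}}}$ (resp.\ $e^{-\int_0^{x_0}\frac{d_j}{\abs{\lambda_j}}}$), and by additivity of the exponents this equals $e^{-\int_0^1\frac{d_j}{\abs{\lambda_j}}}$ times the value of $y_j$ on $[-\tau_j,0)$ prescribed by \eqref{eq:y-from-R}.

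For the converse, I would define $R_i$ by \eqref{eq:R-from-y} and first note that, since $\psi_i(x)\in[0,\tau_i]$ for $x\in[0,1]$, the argument $t-\psi_i(x)$ lies in $[-\tau_i,T]$, so $R$ is well defined on $[0,T]\times[0,1]$. Evaluating at $x=0$ and $x=1$, where one of the two exponential prefactors equals $1$ and $\psi_i$ equals $0$ or $\tau_i$, yields $R_i(t,0)=y_i(t)$ and $R_i(t,1)=e^{-\int_0^1\frac{d_i}{\abs{\lambda_i}}}y_i(t-\tau_i)$ for $i\le\tilde n$, and symmetrically for $i>\tilde n$, so that \eqref{eq_bord} becomes exactly \eqref{eq_generic_delay}. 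For \eqref{eq:weak-sol} I would substitute the definition of $R_i$ into both sides: by additivity of $\psi_i$ the argument of $y_i$ on the left collapses from $t+\int_x^{x+h}\frac{d\xi}{\lambda_i}-\psi_i(x+h)$ to $t-\psi_i(x)$, which is also the argument of $y_i$ coming from $R_i(t,x)$ on the right; and the exponential prefactor on the left, $e^{-\int_0^{x+h}\frac{d_i}{\lambda_i}d\xi}$, coincides with the product of the weight $e^{-\int_x^{x+h}\frac{d_i}{\lambda_i}d\xi}$ appearing in \eqref{eq:weak-sol} and the prefactor $e^{-\int_0^x\frac{d_i}{\lambda_i}d\xi}$ of $R_i(t,x)$; hence the two sides agree. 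Taking $\bar R=R(0,\cdot)$ as initial condition and performing the substitution $s=-\psi_i(x)$ also shows that \eqref{eq:y-from-R} inverts \eqref{eq:R-from-y} on the initial data, so the correspondence between the two notions of solution is a bijection.

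I expect the only genuinely delicate step to be the subcase $0\le t<\tau_j$ of the direct implication: there the $j$-th characteristic must be traced back to the initial datum rather than to a previous boundary value, and one must check that the transport weight $e^{-\int_0^1\frac{d_j}{\abs{\lambda_j}}}$ factors as $e^{-\int_{x_0}^1\frac{d_j}{\abs{\lambda_j}}}\,e^{-\int_0^{x_0}\frac{d_j}{\abs{\lambda_j}}}$ with $x_0=\psi_j^{-1}(\tau_j-t)$, in exact agreement with the exponent hard-wired into the definition of $y_j$ on $[-\tau_j,0)$. Everything else reduces to carefully composing the homeomorphisms $\psi_i$ with the additive cocycle $x\mapsto\int_0^x\frac{d_i}{\lambda_i}$ and keeping track of the relevant intervals.
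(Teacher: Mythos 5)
Your proposal is correct, and it follows the approach the paper intends: the paper omits the proof, stating it is an immediate adaptation of \cite[Proposition~4.2]{Chitour2016Stability}, and your direct verification---using \eqref{eq:weak-sol} along each characteristic, the additivity of $\psi_i$ and of the exponents, and the factor $e^{-\int_0^1 d_i/\abs{\lambda_i}}$ absorbed into $K$---is exactly that adaptation, including the correct treatment of the case $0\le t<\tau_j$ via the point $x_0=\psi_j^{-1}(\tau_j-t)$.
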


Proposition~\ref{prop:equiv-edp-diff} extends \cite[Proposition~4.2]{Chitour2016Stability}, which corresponds to the particular case $u = 0$, $\Lambda(\cdot)$ constant, and $D = 0$. The proof of the latter can be immediately adapted to the present setting, and we omit it here for conciseness.

\begin{remark}
Under additional regularity assumptions on $\Lambda$ and $D$, other works have also addressed the equivalence between weak notions of solution of \eqref{eq:hyperbolic} and \eqref{eq_generic_delay} (see, e.g., \cite{gugat-contamination}).
\end{remark}

It is known (see, e.g., \cite[Proposition~2.2]{Chitour2020Approximate} or \cite[Proposition~2.2]{Mazanti2017Relative}) that, for every $T > 0$, every family $\bar y_{i}\colon [-\tau_i, 0) \to \mathbb R$, $i \in \llbracket 1, n\rrbracket$, and every $u \colon [0, T] \to \mathbb R^m$, there exists a unique solution $y_i\colon [-\tau_i, T] \to \mathbb R$, $i \in \llbracket 1, n\rrbracket$, of \eqref{eq_generic_delay} with initial condition $(\bar y_{i})_{i \in \llbracket 1, n\rrbracket}$ and control $u$. The next result is an immediate consequence of this fact and the correspondence between solutions of \eqref{eq:hyperbolic} and those of \eqref{eq_generic_delay} from Proposition~\ref{prop:equiv-edp-diff}.

\begin{proposition}
\label{prop:exist}
Let $T > 0$, $\bar R\colon [0, 1] \to \mathbb R^n$, and $u\colon [0, T] \to \mathbb R^m$. Then \eqref{eq:hyperbolic} admits a unique solution $R\colon [0, T] \times [0, 1] \to \mathbb R^n$ in $[0, T]$ with initial condition $\bar R$ and control $u$.
\end{proposition}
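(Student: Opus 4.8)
The plan is to obtain Proposition~\ref{prop:exist} by combining the one-to-one correspondence of Proposition~\ref{prop:equiv-edp-diff} with the well-posedness of the difference equation \eqref{eq_generic_delay} recalled just above (see \cite[Proposition~2.2]{Chitour2020Approximate} or \cite[Proposition~2.2]{Mazanti2017Relative}). First I would turn the datum $\bar R$ into an initial condition for \eqref{eq_generic_delay}: for $t \in [-\tau_i, 0)$, set $\bar y_i(t)$ equal to the second branch of \eqref{eq:y-from-R}, i.e.\ $\bar y_i(t) = e^{\int_0^{\psi_i^{-1}(-t)} \frac{d_i(\xi)}{\lambda_i(\xi)} d\xi}\bar R_i(\psi_i^{-1}(-t))$ for $i \in \llbracket 1, \tilde n\rrbracket$ and the analogous formula for $i \in \llbracket \tilde n+1, n\rrbracket$; this is well defined since each $\psi_i$ is a homeomorphism from $[0,1]$ onto $[0,\tau_i]$, as noted after \eqref{eq:taui}. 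The quoted well-posedness result then yields a unique solution $(y_1,\dots,y_n)$ of \eqref{eq_generic_delay} in $[0,T]$ with initial condition $(\bar y_i)_{i\in\llbracket 1,n\rrbracket}$ and control $u$, and the converse part of Proposition~\ref{prop:equiv-edp-diff} produces from it a map $R\colon [0,T]\times[0,1]\to\mathbb R^n$ via \eqref{eq:R-from-y} satisfying \eqref{eq_bord} and the characteristic identity \eqref{eq:weak-sol}.

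For existence it then remains only to check that $R(0,\cdot)=\bar R$. Evaluating \eqref{eq:R-from-y} at $t=0$ gives, for $i\in\llbracket 1,\tilde n\rrbracket$, $R_i(0,x)=e^{-\int_0^x \frac{d_i(\xi)}{\lambda_i(\xi)}d\xi}\,y_i(-\psi_i(x))$ with $-\psi_i(x)\in[-\tau_i,0]$, hence $y_i(-\psi_i(x))=\bar y_i(-\psi_i(x))$; inserting the definition of $\bar y_i$ and using $\psi_i^{-1}(\psi_i(x))=x$ makes the exponential prefactors cancel and returns $\bar R_i(x)$. The case $i\in\llbracket\tilde n+1,n\rrbracket$ is identical. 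Thus $R$ is a solution of \eqref{eq:hyperbolic} in $[0,T]$ with initial condition $\bar R$ and control $u$.

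For uniqueness, let $\widetilde R$ be any solution with the same data. The forward part of Proposition~\ref{prop:equiv-edp-diff} associates to $\widetilde R$ a solution $(\widetilde y_i)$ of \eqref{eq_generic_delay} in $[0,T]$ with control $u$ whose initial condition, read off from \eqref{eq:y-from-R} on $[-\tau_i,0)$, depends only on $\bar R$ and therefore equals $(\bar y_i)$; by uniqueness for \eqref{eq_generic_delay}, $(\widetilde y_i)=(y_i)$. One then observes that the transformations \eqref{eq:y-from-R} and \eqref{eq:R-from-y} are mutually inverse on $[0,T]\times[0,1]$, so that rebuilding a map from $(\widetilde y_i)$ via \eqref{eq:R-from-y} returns $\widetilde R$: indeed, for $t-\psi_i(x)\ge 0$ one has $\widetilde y_i(t-\psi_i(x))=\widetilde R_i(t-\psi_i(x),0)$ and \eqref{eq:weak-sol} along the $i$-th characteristic through $(t-\psi_i(x),0)$ gives exactly $\widetilde R_i(t,x)=e^{-\int_0^x \frac{d_i(\xi)}{\lambda_i(\xi)}d\xi}\widetilde R_i(t-\psi_i(x),0)$, while for $t-\psi_i(x)<0$ one uses the second branch of \eqref{eq:y-from-R} together with the definition of $\bar y_i$ and again \eqref{eq:weak-sol}. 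Hence $\widetilde R=R$.

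The only genuinely delicate point in the whole argument is this last bookkeeping — tracking the exponential weights and the domains of $\psi_i,\psi_i^{-1}$ so that $\bar R$ is recovered \emph{exactly} and the two maps of Proposition~\ref{prop:equiv-edp-diff} are verified to be inverse to each other. Everything else is a direct invocation of results already available, and since that verification is essentially contained in the proof of Proposition~\ref{prop:equiv-edp-diff} (adapted from \cite[Proposition~4.2]{Chitour2016Stability}), in the write-up it suffices to state the consequence and refer to that correspondence, as the text anticipates.
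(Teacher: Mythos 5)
Your argument is exactly the paper's: Proposition~\ref{prop:exist} is stated there as an immediate consequence of the existence--uniqueness result for \eqref{eq_generic_delay} (from \cite[Proposition~2.2]{Chitour2020Approximate}, \cite[Proposition~2.2]{Mazanti2017Relative}) combined with the correspondence of Proposition~\ref{prop:equiv-edp-diff}, and you have simply written out the bookkeeping (transfer of $\bar R$ to $(\bar y_i)$ via \eqref{eq:y-from-R}, recovery of $\bar R$ and mutual inverseness of \eqref{eq:y-from-R}--\eqref{eq:R-from-y}) that the paper leaves implicit. So the proposal is correct and follows essentially the same route; no further comment is needed.
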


Our subsequent goal is to provide controllability results in $L^q$-type spaces for $q\in [1, +\infty]$, and hence one needs to address the correspondence between solutions of \eqref{eq:hyperbolic} and \eqref{eq_generic_delay} from Proposition~\ref{prop:equiv-edp-diff} also in an $L^q$ framework. For that purpose, we define, for $q \in [1, +\infty]$, the space
\[
\Sigma^q = \prod_{i=1}^n L^q((-\tau_i, 0), \mathbb R)
\]
and the map $\mathcal T_q\colon \Sigma^q \to L^q((0, 1), \mathbb R^n)$ which associates, with every $y\in \Sigma^q$, the element $r\in L^q((0, 1), \mathbb R^n)$
given, for $x\in(0,1)$, by
\[
r_i(x) = \begin{dcases*}
e^{-\int_0^x \frac{d_i(\xi)}{\lambda_i(\xi)} d\xi} y_i(-\psi_i(x)), & if $i \in \llbracket 1, \tilde n\rrbracket$, \\
e^{-\int_x^1 \frac{d_i(\xi)}{\abs{\lambda_i(\xi)}} d\xi} y_i(-\psi_i(x)), & if $i \in \llbracket \tilde n + 1, n\rrbracket$.
\end{dcases*}
\]
Note that, since $\frac{d_i}{\lambda_i}\in L^1((0, 1), \mathbb R)$, it follows from \cite[Theorem~3.9]{Evans2015Measure} that $r_i \in L^q((0, 1), \mathbb R)$ and $\mathcal T_q$ is a well-defined bounded linear operator. In addition, since $\frac{1}{\lambda_i} \in L^\infty((0, 1), \mathbb R)$,  once again using \cite[Theorem~3.9]{Evans2015Measure}, we deduce that the operator $\mathcal T_q$ is invertible, and its inverse $\mathcal T_q^{-1}\colon L^q((0, 1), \mathbb R^n) \to \Sigma^q$ is the bounded linear operator associating, with every $r \in L^q((0, 1), \mathbb R^n)$, the element $y \in \Sigma^q$ given, for $i \in \llbracket 1, n\rrbracket$ and $t \in (-\tau_i, 0)$, by
\[
y_i(t) = \begin{dcases*}
e^{\int_0^{\psi_i^{-1}(-t)} \frac{d_i(\xi)}{\lambda_i(\xi)} d\xi} r_{i}(\psi_i^{-1}(-t)) & if $i \in \llbracket 1, \tilde n\rrbracket$, \\
e^{\int_{\psi_i^{-1}(-t)}^1 \frac{d_i(\xi)}{\abs{\lambda_i(\xi)}} d\xi} r_{i}(\psi_i^{-1}(-t)) & if $i \in \llbracket \tilde n + 1, n\rrbracket$.
\end{dcases*}
\]

Given a family $y_i\colon [-\tau_i, T] \to \mathbb R$, $i \in \llbracket 1, n\rrbracket$, and $t \in [0, T]$, we use $y_{[t]}$ to denote the family of functions $y_{[t], i}\colon [-\tau_i, 0) \to \mathbb R$, $i \in \llbracket 1, n\rrbracket$, defined by 
\[y_{[t], i}(s) = y_i(t + s),\qquad
s \in [-\tau_i, 0).\]

Using the maps $\mathcal T_q$ and $\mathcal T_q^{-1}$, we can now state an analogue of Proposition~\ref{prop:equiv-edp-diff} for $L^q$ solutions of \eqref{eq:hyperbolic} and \eqref{eq_generic_delay}.

\begin{proposition}
\label{prop:equiv-Lq}
Let $q\in [1, +\infty]$. Consider 
a solution $R \colon [0, T] \times [0, 1] \to \mathbb R^n$  of \eqref{eq:hyperbolic} in $[0, T]$ with initial condition $\bar R$ and control $u\colon [0, T] \to \mathbb R^m$, and assume that $R(t, \cdot) \in L^q((0, 1), \mathbb R^n)$ for every $t \in [0, T]$. Let $y_i\colon [-\tau_i, T] \to \mathbb R$, $i \in \llbracket 1, n\rrbracket$, be the solution of \eqref{eq_generic_delay} in $[0, T]$ with control $u$ defined by \eqref{eq:y-from-R}. Then, for every $t \in [0, T]$, we have
\[
y_{[t]} \in \Sigma^q \qquad \text{ and } \qquad y_{[t]} = \mathcal T_q^{-1} R(t, \cdot).
\]

Conversely, let the family $y_i\colon [-\tau_i, T] \to \mathbb R$, $i \in \llbracket 1, n\rrbracket$, be a solution of \eqref{eq_generic_delay} in $[0, T]$ with control $u\colon [0, T] \to \mathbb R^m$, and assume that $y_{[t]} \in \Sigma^q$ for every $t \in [0, T]$. Let $R\colon [0, T] \times [0,1]\to \mathbb R^n$ be the solution of \eqref{eq:hyperbolic} in $[0, T]$ with control $u$ defined by \eqref{eq:R-from-y}. Then, for every $t \in [0, T]$, we have
\[
R(t, \cdot) \in L^q((0, 1), \mathbb R^n) \qquad \text{ and } \qquad R(t, \cdot) = \mathcal T_q y_{[t]}.
\]
\end{proposition}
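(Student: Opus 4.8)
The statement is essentially an $L^q$-bookkeeping check on top of the pointwise correspondence already established in Proposition~\ref{prop:equiv-edp-diff}, so the plan is to reduce everything to the properties of the operators $\mathcal T_q$ and $\mathcal T_q^{-1}$. First I would fix $t \in [0,T]$ and unwind the definition of $y_{[t]}$: for $i \in \llbracket 1, \tilde n\rrbracket$ and $s \in [-\tau_i, 0)$ we have $y_{[t],i}(s) = y_i(t+s)$, and since $t+s$ may be negative or nonnegative we must split according to the two branches of \eqref{eq:y-from-R}. The key observation is that, using the characteristic relation \eqref{eq:weak-sol} satisfied by $R$ (with the appropriate choice of $h$ so that the base point lands at $x=0$ or $x=1$), both branches can be rewritten in the single form $y_i(t+s) = e^{-\int_0^{\psi_i^{-1}(-s)}\frac{d_i(\xi)}{\lambda_i(\xi)}d\xi}\, R_i\bigl(t, \psi_i^{-1}(-s)\bigr)$ for $i \in \llbracket 1, \tilde n\rrbracket$, and symmetrically for $i \in \llbracket \tilde n+1, n\rrbracket$ with the interval $[\psi_i^{-1}(-s),1]$. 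Indeed, transporting $R_i(t,\cdot)$ along the $i$-th characteristic from $x = \psi_i^{-1}(-s)$ back to the relevant boundary ($x=0$ when $i\le\tilde n$, $x=1$ when $i>\tilde n$) covers a characteristic-time $\mp s \ge 0$, so the argument of $R_i$ becomes exactly $t+s$; when $t+s\ge 0$ this is \eqref{eq:weak-sol} applied inside $[0,T]\times[0,1]$, and when $t+s<0$ it is \eqref{eq:weak-sol} applied with the base time $0$, whose output is prescribed by the initial-condition branch of \eqref{eq:y-from-R} through $\bar R = R(0,\cdot)$.

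Once this identity is in hand, comparing with the definition of $\mathcal T_q^{-1}$ applied to the function $r = R(t,\cdot) \in L^q((0,1),\mathbb R^n)$ shows, component by component and for a.e.\ $s \in (-\tau_i,0)$, that $y_{[t],i}(s) = (\mathcal T_q^{-1} R(t,\cdot))_i(s)$; hence $y_{[t]} = \mathcal T_q^{-1} R(t,\cdot)$, and in particular $y_{[t]} \in \Sigma^q$ because $\mathcal T_q^{-1}$ maps $L^q((0,1),\mathbb R^n)$ into $\Sigma^q$ (as recorded just before the statement, using \cite[Theorem~3.9]{Evans2015Measure} and $\tfrac{1}{\lambda_i}\in L^\infty$). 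This settles the first half.

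For the converse, I would argue symmetrically. Starting from a solution $(y_i)$ of \eqref{eq_generic_delay} with $y_{[t]}\in\Sigma^q$ for every $t$, Proposition~\ref{prop:equiv-edp-diff} already gives that $R$ defined by \eqref{eq:R-from-y} is a solution of \eqref{eq:hyperbolic}; it remains to identify $R(t,\cdot)$ with $\mathcal T_q y_{[t]}$ and to deduce the $L^q$ regularity. Fixing $t$ and $x \in (0,1)$, the formula \eqref{eq:R-from-y} reads $R_i(t,x) = e^{-\int_0^x \frac{d_i(\xi)}{\lambda_i(\xi)}d\xi}\, y_i(t-\psi_i(x))$ for $i\le\tilde n$; since $\psi_i(x)\in[0,\tau_i]$ we have $t - \psi_i(x) = t + s$ with $s = -\psi_i(x)\in[-\tau_i,0]$, so $y_i(t-\psi_i(x)) = y_{[t],i}(-\psi_i(x)) = y_{[t],i}(-\psi_i(x))$, and substituting $x = \psi_i^{-1}(-s)$ this is exactly the $i$-th component of $\mathcal T_q y_{[t]}$ evaluated at $x$ (compare with the definition of $\mathcal T_q$, noting $\psi_i(\psi_i^{-1}(-s)) = -s$). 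The case $i > \tilde n$ is the same with $1-x$ in place of $x$. Thus $R(t,\cdot) = \mathcal T_q y_{[t]}$ pointwise a.e., and since $y_{[t]}\in\Sigma^q$ and $\mathcal T_q$ is bounded from $\Sigma^q$ into $L^q((0,1),\mathbb R^n)$, we get $R(t,\cdot)\in L^q((0,1),\mathbb R^n)$.

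\textbf{Main obstacle.} The only genuinely delicate point is the two-branch bookkeeping in the first half: one has to verify carefully that the characteristic relation \eqref{eq:weak-sol}, applied with the base time equal to $0$ and the initial-condition expression for $y_i$ on $[-\tau_i,0)$, glues continuously (in the $L^q$, i.e.\ a.e., sense) with the relation applied at interior times $t+s\ge 0$, so that the single closed-form expression for $y_i(t+s)$ is valid for all $s\in[-\tau_i,0)$ regardless of the sign of $t+s$. The substitution $x\leftrightarrow\psi_i^{-1}(-s)$ and the change-of-variables constant $e^{\mp\int d_i/|\lambda_i|}$ must be tracked with the right orientation on the $i\le\tilde n$ versus $i>\tilde n$ components; this is exactly the computation that was deemed a routine adaptation of \cite[Proposition~4.2]{Chitour2016Stability} and omitted there, so I would include just enough of it to make the matching of exponential factors transparent, and otherwise invoke Proposition~\ref{prop:equiv-edp-diff} and the explicit forms of $\mathcal T_q,\mathcal T_q^{-1}$.
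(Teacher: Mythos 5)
Your proposal is correct and follows essentially the same route as the paper, whose proof is exactly this computation stated tersely: rewrite $y_{[t],i}(s)=y_i(t+s)$ in terms of $R(t,\cdot)$ via \eqref{eq:weak-sol} and \eqref{eq:y-from-R}, then compare with $\mathcal T_q^{-1}$, the converse being immediate from \eqref{eq:R-from-y} and the definition of $\mathcal T_q$. One sign slip: the unified formula should read $y_i(t+s)=e^{+\int_0^{\psi_i^{-1}(-s)}\frac{d_i(\xi)}{\lambda_i(\xi)}d\xi}\,R_i\bigl(t,\psi_i^{-1}(-s)\bigr)$ (transporting from $x=\psi_i^{-1}(-s)$ to the boundary inverts the damping factor in \eqref{eq:weak-sol}), which is what matches the positive exponent in the definition of $\mathcal T_q^{-1}$; with your minus sign the claimed identification $y_{[t]}=\mathcal T_q^{-1}R(t,\cdot)$ would not follow, but the argument as described is otherwise sound.
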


\begin{proof}
The first part of the proposition follows by rewriting $y_{[t], i}(s) = y_i(t + s)$ in terms of $R(t, \cdot)$ and by using \eqref{eq:weak-sol} in the expression of $y_{[t], i}(s)$ given by \eqref{eq:y-from-R}. The second part of the proposition is an immediate consequence of \eqref{eq:R-from-y}.
\end{proof}

Concerning existence of solutions in an $L^q$ framework, \cite[Remark~2.3]{Mazanti2017Relative} states that, for every $T > 0$, every family $\bar y_{i} \in L^q((-\tau_i, 0), \mathbb R)$, $i \in \llbracket 1, n\rrbracket$, and every $u \in L^q((0, T), \mathbb R^m)$, the unique solution $y_i\colon [-\tau_i, T] \to \mathbb R$, $i \in \llbracket 1, n\rrbracket$, of \eqref{eq_generic_delay} with initial condition $(\bar y_{i})_{i \in \llbracket 1, n\rrbracket}$ and control $u$ satisfies $y_i \in L^q((-\tau_i, T), \mathbb R)$ for every $i \in \llbracket 1, n\rrbracket$. Relying on Proposition~\ref{prop:equiv-Lq}, we obtain as immediate consequence the following result.

\begin{proposition}\label{prop:ex-sol-Lq}
Let $q \in [1, +\infty]$, $T > 0$, $\bar R \in L^q((0, 1), \mathbb R^n)$, and $u \in L^q((0, T), \mathbb R^m)$. Then \eqref{eq:hyperbolic} admits a unique solution $R\colon [0, T] \times [0, 1] \to \mathbb R^n$ in $[0, T]$ with initial condition $\bar R$ and control $u$, which satisfies $R(t, \cdot) \in L^q((0, 1),\allowbreak \mathbb R^n)$ for every $t \in [0, T]$.
\end{proposition}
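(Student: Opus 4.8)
The plan is to import the conclusion directly from the already-established $L^q$ well-posedness of the difference equation \eqref{eq_generic_delay}, transporting it through the operator $\mathcal T_q$ and the correspondence of Proposition~\ref{prop:equiv-edp-diff}. Existence and uniqueness of a solution $R$ of \eqref{eq:hyperbolic} on $[0,T]$ with initial condition $\bar R$ and control $u$, disregarding any integrability, is precisely the content of Proposition~\ref{prop:exist} (applied to any fixed representatives of the classes $\bar R$ and $u$); hence the only genuinely new assertion to be proved is that this $R$ satisfies $R(t,\cdot)\in L^q((0,1),\mathbb R^n)$ for every $t\in[0,T]$.

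For that, I would set $\bar y := \mathcal T_q^{-1}\bar R\in\Sigma^q$, so that $\bar y_i\in L^q((-\tau_i,0),\mathbb R)$ for each $i\in\llbracket 1,n\rrbracket$, and let $(y_i)_{i\in\llbracket 1,n\rrbracket}$ be the unique solution of \eqref{eq_generic_delay} on $[0,T]$ with initial condition $\bar y$ and control $u$; by \cite[Remark~2.3]{Mazanti2017Relative} this solution satisfies $y_i\in L^q((-\tau_i,T),\mathbb R)$ for every $i$. Reconstructing a function $R$ from $(y_i)_{i\in\llbracket 1,n\rrbracket}$ via the formulas \eqref{eq:R-from-y}, Proposition~\ref{prop:equiv-edp-diff} guarantees that $R$ is a solution of \eqref{eq:hyperbolic} on $[0,T]$ with control $u$, while by the very definition of $\mathcal T_q$ one has $R(0,\cdot)=\mathcal T_q\bar y=\bar R$ as an element of $L^q((0,1),\mathbb R^n)$; by the uniqueness part of Proposition~\ref{prop:exist}, $R$ coincides with the solution under consideration. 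Finally, for any $t\in[0,T]$, since $[t-\tau_i,t]\subseteq[-\tau_i,T]$ and translation maps $L^q$ isometrically onto $L^q$, the history $y_{[t],i}(\cdot)=y_i(t+\cdot)$ lies in $L^q((-\tau_i,0),\mathbb R)$, hence $y_{[t]}\in\Sigma^q$; Proposition~\ref{prop:equiv-Lq} then yields $R(t,\cdot)=\mathcal T_q y_{[t]}\in L^q((0,1),\mathbb R^n)$, which is the desired conclusion.

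I do not expect any genuine obstacle here: the argument is purely organizational, and all of its ingredients --- the invertibility and boundedness of $\mathcal T_q$, the equivalences of Propositions~\ref{prop:equiv-edp-diff}--\ref{prop:equiv-Lq}, and the $L^q$-preservation property of \eqref{eq_generic_delay} from \cite[Remark~2.3]{Mazanti2017Relative} --- are already available. The single point deserving a line of care is the identification $R(0,\cdot)=\bar R$ in $L^q$, i.e.\ matching the representative handed to Proposition~\ref{prop:exist} with the one obtained by applying $\mathcal T_q^{-1}$ and then $\mathcal T_q$, which is immediate since $\mathcal T_q$ and $\mathcal T_q^{-1}$ are mutually inverse.
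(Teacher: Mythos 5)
Your proposal is correct and follows essentially the same route as the paper, which obtains the result as an immediate consequence of the $L^q$-preservation property of \eqref{eq_generic_delay} from \cite[Remark~2.3]{Mazanti2017Relative} combined with the correspondence of Proposition~\ref{prop:equiv-Lq}. You merely spell out the details (transport of the initial condition via $\mathcal T_q^{-1}$, uniqueness via Proposition~\ref{prop:exist}, and membership of the histories $y_{[t]}$ in $\Sigma^q$) that the paper leaves implicit.
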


\begin{remark}
Definition~\ref{def:sol-EDP} still makes sense and Propositions~\ref{prop:equiv-edp-diff} and~\ref{prop:exist} remain true under the weaker assumption on $\Lambda$ that $\frac{1}{\lambda_i} \in L^1((0, 1), \mathbb R)$ for every $i \in \llbracket 1, n\rrbracket$. 
The stronger assumption that $\lambda_i$ and $\frac{1}{\lambda_i}$ belong to $L^\infty((0, 1), \mathbb R)$ are needed only to deal with solutions in $L^q$
in order for the operators $\mathcal T_q$ and $\mathcal T_q^{-1}$ to be well-defined, and, in the case $q = +\infty$, they can be relaxed to $\lambda_i$ and $\frac{1}{\lambda_i}$ belonging to $L^1((0, 1), \mathbb R)$.
\end{remark}

\subsection{Recursive representation formula}

Motivated by the links between solutions of \eqref{eq:hyperbolic} and \eqref{eq_generic_delay} established in the previous section, we now provide a representation formula for the solutions of \eqref{eq_generic_delay}, which is an immediate adaptation of the representation formula for solutions of linear difference equations of the form \eqref{system_lin_formel2} from \cite{Chitour2020Approximate, Mazanti2017Relative}. For that purpose, we first provide the following definition.

\begin{definition}
\label{def_Xi_n}
The family of matrices $\Xi_\ell \in \mathcal{M}_{n,n}(\mathbb{R})$, $\ell \in \mathbb{Z}^n$, is defined recursively as
\begin{equation}
 \Xi_\ell= \begin{dcases*}
     0 & if $\ell \in \mathbb{Z}^n \setminus \mathbb{N}^n$,\\
      I_n & if  $\ell=0$,\\
 \sum_{k=1}^n K e_{{k}} e_{{k}}^T \Xi_{\ell - e_k} & if $\ell \in \mathbb{N}^n$ and $|\ell|>0$,
    \end{dcases*}   
\end{equation}
where $e_1, \dotsc, e_n$ denotes the canonical basis of $\mathbb{R}^n$.

\end{definition}

\begin{remark}
We remark that $K e_k e_k^T$ represents the $n \times n$ matrix whose $k$th column coincides with that of  $K$ and such that all other elements are zero. Its appearance is motivated by the identity\footnote{Strictly speaking, this identity makes sense only for $t$ large enough (namely, $t\ge \tau_{\max}$) since, for small $t>0$, some components of $y(t - \tau_k)$ may fail to be defined.}
\[ K \begin{pmatrix}
 y_1(t-\tau_1) \\
 \vdots\\
 y_n(t-\tau_n)
 \end{pmatrix}=\sum_{k=1}^n K e_k e_k^T y(t-\tau_k),\]
where $y(\cdot) = (y_1(\cdot), \dotsc, y_n(\cdot))$.
\end{remark}

Using the matrices $\Xi_\ell$, $\ell \in \mathbb Z^n$, defined above, we provide the following definitions of the flow and endpoint operators for \eqref{eq_generic_delay}.

\begin{definition}
\label{def_Upsilon_E}
Let $T \geq 0$, $q \in [1,+\infty]$, and denote by $\tau$ the vector $(\tau_1, \dotsc, \tau_n)$.

\begin{enumerate}
\item The \emph{flow operator} $\Upsilon_q(T)\colon \Sigma^q \to \Sigma^q$ is defined, for $\phi \in \Sigma^q$ and $i \in \llbracket 1, n\rrbracket$,  by
\[
\left(\Upsilon_q(T)\phi \right)_i(s) = e_i^T \sum_{\substack{(\ell, j) \in \mathbb{N}^n \times \llbracket 1,n\rrbracket \\ -\tau_j \le T+s-\tau \cdot \ell <0}}  \Xi_{\ell - e_j} K e_j \phi_j(T + s - \tau \cdot \ell),\qquad 
s \in [-\tau_i,0].
\]
\item The \emph{endpoint operator} $E_q(T)\colon L^q([0,T],\mathbb{R}^m) \to \Sigma^q$ is defined, for $u \in L^q([0,T],\mathbb{R}^m)$ and $i \in \llbracket 1, n \rrbracket$, by 
\[
\left(E_q(T)u \right)_i(t)= e_i^T\sum_{\substack{\ell \in \mathbb{N}^n \\ \tau \cdot \ell \le T+t}} \Xi_\ell B u(T+t-\tau \cdot \ell), \qquad 
t\in [-\tau_i,0].
\]
\end{enumerate}
\end{definition}

It follows immediately from their expressions that $\Upsilon_q(T)$ and $E_q(T)$ are bounded linear operators. The main motivation for introducing them is that they can be used to express the solutions of \eqref{eq_generic_delay}, as stated in the following result, which is based on the recursive application of \eqref{eq_generic_delay} in order to express $y_1(t), \dotsc, y_n(t)$ in terms of the initial condition and the control only (cf.~\cite[Proposition~2.4]{Chitour2020Approximate} or \cite[Proposition~2.7]{Mazanti2017Relative}).

\begin{proposition}
\label{prop_var_const_formula}
For $T \geq 0$, $q\in [1,+\infty]$, $u \in L^q([0,T],\allowbreak \mathbb{R}^m)$, and $\phi \in \Sigma^q$, the unique solution $y$ of \eqref{eq_generic_delay} with initial condition $\phi$ and control $u$ satisfies, for every $t \in [0,T]$,
\begin{equation}
\label{eq:variation_constant}
y_{[t]}=\Upsilon_q(t)\phi+E_q(t)u.
\end{equation}
\end{proposition}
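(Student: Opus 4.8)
The plan is to prove \eqref{eq:variation_constant} by strong induction, iterating the defining recursion \eqref{eq_generic_delay} until every argument that appears has been pushed back into the interval where the initial condition $\phi$ lives. Concretely, fix $t \in [0,T]$ and $i \in \llbracket 1,n\rrbracket$; we must compute $y_i(t+s)$ for $s \in [-\tau_i,0)$. If $t + s < 0$ there is nothing to do: $y_i(t+s) = \phi_i(t+s)$, which is the $\ell = 0$ (hence $\Xi_{\ell-e_j} = \Xi_{-e_j} = 0$ unless $j = i$, giving $\Xi_0 K e_i = K e_i$... ) — in fact one checks directly that in this regime the flow-operator sum reduces to the single term $\phi_i(t+s)$ and the endpoint-operator sum is empty. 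For $t + s \geq 0$, the equation $y(t+s) = K\,(y_1(t+s-\tau_1),\dots,y_n(t+s-\tau_n))^T + Bu(t+s) = \sum_{k=1}^n K e_k e_k^T y(t+s-\tau_k) + Bu(t+s)$ (the identity from the Remark after Definition~\ref{def_Xi_n}) lets us replace $y_i(t+s)$ by a linear combination of values of $y$ at the strictly earlier times $t+s-\tau_k$, plus a control term.

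First I would set up the induction on the number of delay-steps, or equivalently organize it as a statement about the finite set $S_{t,s} = \{(\ell,j) \in \mathbb{N}^n \times \llbracket 1,n\rrbracket : -\tau_j \leq t + s - \tau\cdot\ell < 0\}$ together with the control index set $\{\ell \in \mathbb{N}^n : \tau\cdot\ell \leq t+s\}$: both are finite because $\tau\cdot\ell \to \infty$ as $|\ell| \to \infty$. The cleanest route is to prove the pointwise identity
\[
y_i(t+s) = e_i^T\!\!\!\sum_{\substack{(\ell,j)\in\mathbb{N}^n\times\llbracket 1,n\rrbracket\\ -\tau_j \le t+s-\tau\cdot\ell<0}}\!\!\!\Xi_{\ell-e_j}Ke_j\,\phi_j(t+s-\tau\cdot\ell) \;+\; e_i^T\!\!\sum_{\substack{\ell\in\mathbb{N}^n\\ \tau\cdot\ell\le t+s}}\!\!\Xi_\ell B\,u(t+s-\tau\cdot\ell)
\]
by induction on $\lfloor (t+s)/\tau_{\min}\rfloor$ (or any quantity that strictly decreases under $t+s \mapsto t+s-\tau_k$). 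In the inductive step, apply \eqref{eq_generic_delay} once to write $y_i(t+s) = \sum_{k} e_i^T K e_k\, y_k(t+s-\tau_k) + e_i^T B u(t+s)$, apply the induction hypothesis to each $y_k(t+s-\tau_k)$, and then reindex the resulting double sums via $\ell \mapsto \ell + e_k$, using $\Xi_\ell = \sum_k K e_k e_k^T \Xi_{\ell-e_k}$ (Definition~\ref{def_Xi_n}) to collapse the contributions $e_i^T K e_k e_k^T \Xi_{(\ell-e_j)}$ summed over $k$ into $e_i^T \Xi_\ell$-type terms for the control part and $e_i^T \Xi_{\ell - e_j} K e_j$ for the initial-condition part. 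The convention $\Xi_\ell = 0$ for $\ell \notin \mathbb{N}^n$ is what makes the boundary bookkeeping (terms where a shifted index leaves $\mathbb{N}^n$) automatic.

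The main obstacle — really the only subtlety — is the reindexing and the verification that the constraint sets match up exactly after the shift $\ell \mapsto \ell + e_k$: one has to check that the condition $-\tau_j \leq (t+s-\tau_k) - \tau\cdot\ell' < 0$ appearing in the induction hypothesis for $y_k(t+s-\tau_k)$, after substituting $\ell = \ell' + e_k$, becomes exactly $-\tau_j \leq (t+s) - \tau\cdot\ell < 0$, and similarly for the control sum; and that the "new" terms generated by applying \eqref{eq_generic_delay} at time $t+s$ (namely $j = k$, $\ell = e_k$, contributing $e_i^T K e_k\, \phi_k(t+s-\tau_k)$ when $t+s-\tau_k < 0$, and the control term $e_i^T B u(t+s)$ corresponding to $\ell = 0$) are precisely the $|\ell| \leq 1$ terms of the claimed formula that are not produced by the shift. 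Since all sums are finite and the identity is pointwise in $t$, once the pointwise formula is established the $L^q$ statement \eqref{eq:variation_constant} follows immediately by recognizing the two sums as $(\Upsilon_q(t)\phi)_i(s)$ and $(E_q(t)u)_i(s)$ from Definition~\ref{def_Upsilon_E}, and invoking the already-noted boundedness of these operators together with the existence–uniqueness of $L^q$ solutions quoted before Proposition~\ref{prop:ex-sol-Lq}. As noted, this is an immediate adaptation of \cite[Proposition~2.4]{Chitour2020Approximate} / \cite[Proposition~2.7]{Mazanti2017Relative} to the component-wise delay structure of \eqref{eq_generic_delay}, the only difference being that each scalar component $y_k$ carries its own delay $\tau_k$ rather than a common vector delay, which is exactly what the factors $e_k e_k^T$ in Definition~\ref{def_Xi_n} encode.
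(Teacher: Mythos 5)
Your overall strategy---iterate \eqref{eq_generic_delay}, organize the bookkeeping as an induction on the number of delay steps, reindex via $\ell\mapsto\ell+e_k$, and let the convention $\Xi_\ell=0$ for $\ell\notin\mathbb N^n$ absorb the boundary terms---is exactly the argument the paper intends: the paper gives no detailed proof, describing the result as the recursive application of \eqref{eq_generic_delay} as in the cited Propositions of \cite{Chitour2020Approximate,Mazanti2017Relative}, and your inductive step (including the collapse $\sum_k e_i^TKe_ke_k^T\Xi_{\ell-e_k-e_j}=e_i^T\Xi_{\ell-e_j}$ and the separate treatment of the $\abs{\ell}\le 1$ terms) is a faithful rendering of that computation for $t+s\ge 0$.

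The one step that would fail as written is your base case. You assert that for $t+s<0$ the flow-operator sum ``reduces to the single term $\phi_i(t+s)$'', justified by ``$\Xi_{-e_j}=0$ unless $j=i$, giving $\Xi_0Ke_i$''. This is not correct: by Definition~\ref{def_Xi_n}, $\Xi_{-e_j}=0$ for \emph{every} $j$, including $j=i$, and more generally any pair $(\ell,j)$ with $\Xi_{\ell-e_j}\neq 0$ has $\ell_j\ge 1$, hence $\tau\cdot\ell\ge\tau_j$ and $t+s-\tau\cdot\ell\le t+s-\tau_j<-\tau_j$ when $t+s<0$, so the constraint $-\tau_j\le t+s-\tau\cdot\ell$ is violated. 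Thus in the regime $t+s<0$ the sum in Definition~\ref{def_Upsilon_E} is empty, not equal to $\phi_i(t+s)$ (and even under your reading the term would be $e_i^TKe_i\,\phi_i(t+s)$, not $\phi_i(t+s)$). The pointwise series identity you set up is the correct expression for $y_i(t+s)$ only when $t+s\ge 0$; for $t+s<0$ the value $\phi_i(t+s)$ comes from the pass-through of the initial condition in the definition of solution, not from the series, and this pass-through must be accounted for explicitly (equivalently, $\Upsilon_q(t)$ must be understood as acting by translation of $\phi_i$ on $[-\tau_i,-t)$; note that, read completely literally, the displayed formula in Definition~\ref{def_Upsilon_E} omits this term, since it would give $\Upsilon_q(0)=0$ instead of the identity). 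So you should split the verification into the two regimes $t+s<0$ and $t+s\ge 0$, prove the series identity by your induction only in the second regime, and state the first regime as the pass-through case; with that correction, your argument matches the paper's intended proof.
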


\begin{remark}
\label{remk:RanETnondecre}
As a consequence of \eqref{eq:variation_constant}, the operator $E_q(T)$ associates with a control $u$ the state at time $T$ of the corresponding trajectory starting at $0\in \Sigma^q$ at time $0$.
In particular, $T\mapsto \Ran E_q(T)$ is non-decreasing, since $E_q(T_1)u_1=E_q(T_1+T_2)u_2$ whenever $u_2$ is equal to zero on $[0,T_2)$ and to $u_1(\cdot - T_2)$ on $[T_2,T_1+T_2]$.
\end{remark}

In order to provide controllability characterizations of System~\eqref{eq_generic_delay}, one may consider the dual operator $E_q(T)^{\ast}$ of $E_q(T)$, whose explicit expression, in the case $q<\infty$, is given next (see \cite[Lemma~2.9]{Chitour2020Approximate}).

\begin{proposition}
\label{prop_operator_dual_E}
Let $T \geq 0$, $q \in [1,+\infty)$, and denote by $q'$ the conjugate exponent of $q$.
The dual  $E_q(T)^*$ of the operator $E_q(T)$ is the linear operator from $\Sigma^{q'}$ into $L^{q'}([0,T],\mathbb{R}^m)$ given by
\begin{equation}
\label{eq:op_E^*}
\left(E_q(T)^*y \right)_i(t)= e_i^T
\sum_{\substack{(\ell, j) \in \mathbb{N}^n \times \llbracket 1, n\rrbracket \\ -\tau_{j} \le t-T+\tau \cdot \ell<0}} B^* \Xi_\ell^* e_j y_j(t-T+\tau \cdot \ell),\qquad y \in \Sigma^{q'},\;
t \in [0,T].
\end{equation}
\end{proposition}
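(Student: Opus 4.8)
The goal is to compute the dual of $E_q(T)$, i.e., to find the operator $E_q(T)^* \colon \Sigma^{q'} \to L^{q'}([0,T],\mathbb{R}^m)$ satisfying $\langle E_q(T)u, y\rangle_{\Sigma^{q},\Sigma^{q'}} = \langle u, E_q(T)^*y\rangle_{L^q,L^{q'}}$ for all $u \in L^q([0,T],\mathbb{R}^m)$ and $y \in \Sigma^{q'}$. The plan is to write out the left-hand pairing explicitly using the definition of $E_q(T)$ from Definition~\ref{def_Upsilon_E}, interchange the (finite) sums with the integrals, and then perform a change of variables in each integral to recognize the result as a pairing of $u$ against the claimed expression.

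First I would expand the left-hand side. By definition of the product pairing on $\Sigma^q \times \Sigma^{q'}$ and of $E_q(T)$, we have
\[
\langle E_q(T)u, y\rangle = \sum_{i=1}^n \int_{-\tau_i}^0 \left(E_q(T)u\right)_i(t)\, y_i(t)\, dt = \sum_{i=1}^n \int_{-\tau_i}^0 \sum_{\substack{\ell \in \mathbb{N}^n \\ \tau\cdot\ell \le T+t}} e_i^T \Xi_\ell B\, u(T+t-\tau\cdot\ell)\, y_i(t)\, dt.
\]
Since for fixed $t$ the sum over $\ell$ is finite (there are only finitely many $\ell \in \mathbb{N}^n$ with $\tau\cdot\ell \le T+t$, as the $\tau_j$ are positive), and since the functions involved are integrable, I can freely interchange summation and integration. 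The next step is, for each term indexed by $(i,\ell)$, to perform the substitution $s = T + t - \tau\cdot\ell$, so that $t = s - T + \tau\cdot\ell$ and $dt = ds$; the constraint $-\tau_i \le t \le 0$ becomes $-\tau_i \le s - T + \tau\cdot\ell \le 0$, and the constraint $\tau\cdot\ell \le T + t$ becomes $s \ge 0$. Relabeling the summation index $i$ as $j$ (to match the statement), this rewrites the pairing as
\[
\sum_{j=1}^n \sum_{\substack{\ell \in \mathbb{N}^n \\ -\tau_j \le s - T + \tau\cdot\ell < 0,\ s \ge 0}} \int e_j^T \Xi_\ell B\, u(s)\, y_j(s - T + \tau\cdot\ell)\, ds,
\]
where the $s$-integration runs over the appropriate subinterval of $[0,T]$. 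Transposing the scalar $e_j^T\Xi_\ell B u(s)\, y_j(\cdot) = u(s)^T B^* \Xi_\ell^* e_j\, y_j(\cdot)$, collecting all terms into a single integral $\int_0^T u(s)^T (\cdots)\, ds$, and reading off the integrand, one obtains exactly the claimed formula \eqref{eq:op_E^*} for $(E_q(T)^*y)_i(t)$ with $i$ the control-component index running over $\llbracket 1, m\rrbracket$ — here I should be careful that the outer index in \eqref{eq:op_E^*} is the $\mathbb{R}^m$-component index of the control, not the $\llbracket 1,n\rrbracket$ index, and check consistency of $e_i^T B^*$ accordingly.

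Finally, one must verify that the operator so defined genuinely maps $\Sigma^{q'}$ into $L^{q'}([0,T],\mathbb{R}^m)$ and is bounded; this is immediate since it is a finite sum of shifted, matrix-weighted copies of the components of $y$, and $y_j \in L^{q'}$. The main obstacle — really the only delicate point — is bookkeeping: keeping the index sets and the inequality constraints consistent through the change of variables, and making sure the boundary cases (strict versus non-strict inequalities, the endpoints $s=0$ and $s=T$, which have measure zero and hence do not matter) are handled so that the final index set $\{-\tau_j \le t - T + \tau\cdot\ell < 0\}$ matches exactly. Since the referenced result \cite[Lemma~2.9]{Chitour2020Approximate} establishes the analogous statement for the general system \eqref{system_lin_formel2}, and \eqref{eq_generic_delay} is the special case $d = n$, $N = n$, $A_j = Ke_je_j^T$, $\Lambda_j = \tau_j$, $\widetilde B = B$, the cleanest route may simply be to specialize that lemma, noting that $\Xi_\ell$ here is precisely the coefficient matrix appearing in the representation formula for that special case; the computation sketched above is then the self-contained verification. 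I expect the adaptation to be routine, so I would present it concisely, emphasizing the change of variables and referring to \cite{Chitour2020Approximate} for the general argument.
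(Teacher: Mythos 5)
Your proposal is correct and matches the intended argument: the paper gives no proof of this proposition, simply citing \cite[Lemma~2.9]{Chitour2020Approximate}, and your duality computation (expand the pairing, interchange the finite sum with the integral, substitute $s=T+t-\tau\cdot\ell$, transpose, and read off the integrand) is exactly the specialization of that lemma to \eqref{eq_generic_delay}. You also correctly flag the only notational trap, namely that the outer index $i$ in \eqref{eq:op_E^*} refers to the $\mathbb{R}^m$-components of the control so $e_i^T$ there acts on $B^*\Xi_\ell^* e_j$ from the left as a vector of $\mathbb{R}^m$, and the measure-zero boundary cases are indeed harmless.
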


We also prove a useful property on the coefficients $\Xi_{\ell}$, $\ell \in \mathbb Z^n$, which is an improved version of \cite[Lemma~4.6]{ChitourHautus}.

\begin{lemma}
\label{lem:1}
Let $\Xi_\ell$, $\ell\in \mathbb{Z}^n$, be the matrices introduced in Definition~\ref{def_Xi_n}. 
There exist real coefficients $\alpha_k$ for $k \in \{0, 1\}^n$
such that for every $j\in \llbracket 1, n\rrbracket$ and 
$\ell \in \{\ell' \in \mathbb N^n \suchthat \max_{i \in \llbracket 1, n\rrbracket} \ell'_i \geq 2 \text{ or } \ell'_j = 1\}$,
we have
\begin{equation}
\label{eq:lem1:1}
e_j^T\Xi_\ell = -\sum_{k \in \{0, 1\}^n \setminus \{(0,\dots,0)\}}\alpha_k e_j^T\Xi_{\ell-k}.
\end{equation}
\end{lemma}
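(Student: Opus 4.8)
The plan is to exploit the fact that the generating function of the matrices $\Xi_\ell$ is rational, specifically that $\sum_{\ell \in \mathbb N^n} \Xi_\ell z^\ell = (I_n - \sum_{k=1}^n K e_k e_k^T z_k)^{-1}$ as a formal power series in $z = (z_1, \dots, z_n)$, where $z^\ell = z_1^{\ell_1} \cdots z_n^{\ell_n}$. Writing $P(z) = \det(I_n - \sum_{k=1}^n K e_k e_k^T z_k)$, Cramer's rule gives that each entry of $(I_n - \sum_k K e_k e_k^T z_k)^{-1}$ is of the form (a polynomial in $z$) divided by $P(z)$. The crucial structural observation is that the matrix $\sum_k K e_k e_k^T z_k$ has its $k$th column equal to $z_k$ times the $k$th column of $K$; hence, when expanding the determinant $P(z)$, each variable $z_k$ appears to degree at most $1$, so $P(z) = \sum_{k \in \{0,1\}^n} p_k z^k$ for suitable real coefficients $p_k$, with $p_0 = 1$ (the constant term coming from $\det I_n$). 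I would set $\alpha_k := p_k$ for $k \in \{0,1\}^n \setminus \{0\}$ (so that $1 + \sum_{k \ne 0} \alpha_k z^k = P(z)$).

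The first step is to make the generating-function identity precise at the level of the recursion in Definition~\ref{def_Xi_n}: the recursion $\Xi_\ell = \sum_{k=1}^n K e_k e_k^T \Xi_{\ell - e_k}$ for $|\ell| > 0$, together with $\Xi_0 = I_n$ and $\Xi_\ell = 0$ off $\mathbb N^n$, is exactly the statement $\left(I_n - \sum_{k=1}^n K e_k e_k^T z_k\right) \sum_\ell \Xi_\ell z^\ell = I_n$ in $\mathcal M_{n,n}(\mathbb R)[[z_1, \dots, z_n]]$. Multiplying on the left by the adjugate matrix $\mathrm{adj}\bigl(I_n - \sum_k K e_k e_k^T z_k\bigr)$, whose entries are polynomials, I obtain $P(z) \sum_\ell \Xi_\ell z^\ell = \mathrm{adj}(\cdots)$, a matrix whose entries are polynomials in $z$. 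The second step is to extract, for a fixed row index $j$, the coefficient of $z^\ell$ on both sides. Since $P(z) = \sum_{k \in \{0,1\}^n} \alpha_k z^k$ with $\alpha_0 = 1$, the left side yields $\sum_{k \in \{0,1\}^n} \alpha_k\, e_j^T \Xi_{\ell - k}$, which equals $e_j^T \Xi_\ell + \sum_{k \in \{0,1\}^n \setminus\{0\}} \alpha_k\, e_j^T \Xi_{\ell-k}$; the right side vanishes whenever $z^\ell$ does not appear in the $j$th row of $\mathrm{adj}(\cdots)$. The final step is to pin down exactly which multi-indices $\ell$ force the right side to vanish, and to check this set contains $\{\ell' \in \mathbb N^n : \max_i \ell'_i \ge 2 \text{ or } \ell'_j = 1\}$.

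For the degree count on the adjugate: the $(j, i)$ entry of $\mathrm{adj}(I_n - \sum_k K e_k e_k^T z_k)$ is $\pm$ the determinant of the matrix obtained by deleting row $j$ and column $i$. As above, this $(n-1)\times(n-1)$ determinant is multilinear in the columns, so each variable $z_k$ with $k \ne i$ appears to degree at most $1$; the variable $z_i$ does not appear at all (its column has been deleted). Hence every monomial $z^\ell$ occurring in the $j$th row of the adjugate satisfies $\ell \in \{0,1\}^n$ and $\ell_j = 0$ — the latter because column $j$ is deleted in every cofactor $C_{j,i}$ except possibly $i = j$, where row $j$ is also deleted so $z_j$ is absent as well. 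Therefore, if $\ell \notin \{0,1\}^n$ (i.e. $\max_i \ell_i \ge 2$), or if $\ell \in \{0,1\}^n$ but $\ell_j = 1$, the coefficient of $z^\ell$ in the $j$th row of the adjugate is zero, which gives precisely \eqref{eq:lem1:1}. The main obstacle I anticipate is bookkeeping: carefully justifying that the formal-power-series manipulations are legitimate (they are, since all series are supported in $\mathbb N^n$ and multiplication is well-defined there) and correctly identifying the column/row deletions so that the claim $\ell_j = 0$ in the adjugate is airtight — in particular handling the diagonal cofactor $i=j$ separately. Once the degree bound on $P$ and on the adjugate is established, the extraction of coefficients is routine, and the coefficients $\alpha_k$ are manifestly independent of $j$ and of $\ell$, as required.
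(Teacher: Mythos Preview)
Your proposal is correct and follows essentially the same route as the paper: identify $\sum_{\ell}\Xi_\ell t^\ell$ with $(I_n-K(t))^{-1}$ (the paper does this via Neumann series for small $t$, you via formal power series---a purely cosmetic difference), multiply by $P(t)=\det(I_n-K(t))=\sum_{k\in\{0,1\}^n}\alpha_k t^k$ to obtain the adjugate, and then read off \eqref{eq:lem1:1} from the fact that the $j$th row of the adjugate does not involve $t_j$. One bookkeeping slip to clean up (which you yourself flagged): with the standard convention $\mathrm{adj}(A)_{ji}=(-1)^{i+j}\det(A\text{ with row }i\text{ and column }j\text{ deleted})$, column $j$ is deleted in \emph{every} entry of the $j$th row, with no exception at $i=j$; your stated description ``delete row $j$, column $i$'' is the cofactor matrix rather than its transpose, and your subsequent sentence about column $j$ being deleted ``except possibly $i=j$'' is inconsistent with it.
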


\begin{proof}
For $t=(t_1,\dots,t_n) \in \mathbb{R}^n$, set
\[
K(t) = K \diag(t_1, \dotsc, t_n) = t_1 K e_1 e_1^T + \dotsb + t_n K e_n e_n^T.
\]
By Definition~\ref{def_Xi_n} and an immediate induction argument, one deduces that 
\begin{equation}
\label{eq:lem1:1.1}
K(t)^j=\sum_{\substack{\ell \in \mathbb{N}^n \\ \abs{\ell} = j}} \Xi_\ell t^\ell,\qquad j\in \mathbb{N},\quad t \in \mathbb{R}^n,
\end{equation}
where $t^\ell := t_1^{\ell_1} \dotsm t_n^{\ell_n}$.  Indeed, \eqref{eq:lem1:1.1} is verified for $j = 0$ and, if $j \in \mathbb N$ is such that the equality in \eqref{eq:lem1:1.1} holds true for every $t \in \mathbb R^n$, then
\[
K(t)^{j+1} = \sum_{k=1}^n \sum_{\substack{\ell \in \mathbb{N}^n \\ \abs{\ell} = j}} t_k t^\ell K e_k e_k^T \Xi_\ell = \sum_{k=1}^n \sum_{\substack{\ell^\prime \in \mathbb{N}^n \\ \abs{\ell^\prime} = j + 1}} t^{\ell^\prime} K e_k e_k^T \Xi_{\ell^\prime - e_k} = \sum_{\substack{\ell^\prime \in \mathbb{N}^n \\ \abs{\ell^\prime} = j + 1}}  t^{\ell^\prime} \Xi_{\ell^\prime}.
\]
Notice that, by a similar induction argument and Definition~\ref{def_Xi_n}, one gets that $\|\Xi_\ell\|\le (n\|K\|)^{|\ell|}$ for every $\ell\in \mathbb N^n$. Using Neumann series, we deduce from Equation~\eqref{eq:lem1:1.1} 
that, for $t$  small enough,
\begin{align}
\big(I_n-K(t)\big)^{-1}&=\sum_{j\in \mathbb{N}} K(t)^j
=\sum_{\ell\in \mathbb{N}^n} \Xi_{\ell} t^\ell.\label{eq:lem1:2}
\end{align}
Notice that $P(t)=\det\big(I_n-K(t)\big)$ is a multivariate polynomial of degree $n$. Since $t_j$ appears only in the $j$th column of $I_n-K(t)$, the variable $t_j$ does not appear in any $(i,j)$-minor of $I_n-K(t)$ for $i\in \llbracket1, n\rrbracket$. Hence, we have
\begin{equation}
\label{eq:lem1:3}
P(t)=\sum_{k \in \{0, 1\}^n} \alpha_k t^k,
\end{equation}
for some real numbers 
$\alpha_k$ defined for $k \in \{0, 1\}^n$, with $\alpha_0=1$ and $\alpha_{(1, \dotsc, 1)} = (-1)^n \det K$.

Let $\operatorname{Adj}\big(I_n-K(t)\big)$ be the adjugate matrix of $I_n-K(t)$. Reasoning as for $P(t)$, we deduce that there exist matrices $M_\ell \in \mathcal{M}_{n, n}(\mathbb{R})$ for $\ell \in \{0, 1\}^n$, with $M_0 = \operatorname{Adj}(I_n) = I_n$, such that
\begin{equation}
\label{eq:lem1:4}
\operatorname{Adj}\big(I_n-K(t)\big)=\sum_{\ell \in \{0, 1\}^n}  M_{\ell} t^\ell.
\end{equation}
Moreover, for every $j \in \llbracket 1, n\rrbracket$, the $j$th row of $\operatorname{Adj}\big(I_n-K(t)\big)$ does not depend on $t_j$, and thus 
\begin{equation}\label{eq:ejTMell}
e_j^T M_\ell = 0\qquad \mbox{if}\quad \ell_j = 1.
\end{equation}
In particular, $M_{(1, \dotsc, 1)} = 0$. On the other hand, Equations~\eqref{eq:lem1:2} and \eqref{eq:lem1:3} lead to
\begin{align}
\operatorname{Adj}\big(I_n-A(t)\big) = P(t)\big(I_n-A(t)\big)^{-1} 
= \sum_{k \in \{0, 1\}^n} \sum_{\ell \in \mathbb{N}^n} \alpha_{k} \Xi_{\ell} t^{k + \ell} 
& = \sum_{\ell \in \mathbb{N}^n} \sum_{k \in \{0, 1\}^n} \alpha_{k} \Xi_{\ell - k} t^{\ell}, \label{eq:lem1:6}
\end{align}
where we also use the fact that $\Xi_k = 0$ if $k \in \mathbb Z^n \setminus \mathbb N^n$. Comparing Equations~\eqref{eq:lem1:4} and \eqref{eq:lem1:6}, we deduce that, for $\ell \in \mathbb{N}^n$, 
\begin{equation*}
\sum_{k \in \{0, 1\}^n} \alpha_{k} \Xi_{\ell - k} =
\begin{dcases*}
M_\ell & if $\ell \in \{0, 1\}^n$, \\
0 & otherwise.
\end{dcases*}
\end{equation*}
Since $\alpha_0 = 1$, we have
\begin{equation*}
e_j^T \Xi_\ell=
\begin{dcases*}e_j^T M_\ell-
\sum_{k \in \{0, 1\}^n\setminus\{(0,\dotsc,0)} \alpha_{k} e_j^T\Xi_{\ell - k}
 & if $\ell \in \{0, 1\}^n$, \\
-\sum_{k \in \{0, 1\}^n\setminus\{(0,\dotsc,0)} \alpha_{k} e_j^T\Xi_{\ell - k} & otherwise.
\end{dcases*}
\end{equation*}
The conclusion follows from \eqref{eq:ejTMell}.
\end{proof}

\subsection{Controllability notions}

Let us now introduce the controllability notions for Systems~\eqref{eq:hyperbolic} and \eqref{eq_generic_delay} considered in this paper.

\begin{definition}
\label{def_controllability_EDP}
Let $T > 0$ and $q\in [1,+\infty]$. System~\eqref{eq:hyperbolic} is said to be
\begin{enumerate}[1)]
\item  \emph{$L^q$-approximately controllable in time $T$} if, for every $\epsilon>0$ and $\phi, \psi \in L^q([0,1],\mathbb{R}^n)$, there exists $u \in L^q([0,T],\mathbb{R}^m)$ such that the solution $R$ of \eqref{eq:hyperbolic} with initial condition $\phi$ and control $u$ satisfies $\|R(T,\cdot)-\psi\|_{[0,1],\,q} < \epsilon$.

\item  \emph{$L^q$-exactly controllable in time $T$} if, for every $\phi, \psi \in L^q([0,1],\mathbb{R}^n)$, there exists $u \in L^q([0,T],\mathbb{R}^m)$ such that the solution $R$ of \eqref{eq:hyperbolic} with initial condition $\phi$ and control $u$ satisfies $R(T, \cdot) = \psi$.
\end{enumerate}
\end{definition}

\begin{definition}Let $T > 0$ and $q \in [1,+\infty]$. System~\eqref{eq_generic_delay} is said to be
\begin{enumerate}[1)]
\item \emph{$L^q$-approximately controllable in time $T$} if, 
for every $\epsilon > 0$ and $\phi, \psi \in \Sigma^q$,
there exists $u \in L^q([0,T],\mathbb{R}^m)$ such that the solution $y$ of \eqref{eq_generic_delay} with initial condition $\phi$ and control $u$ satisfies $\|y_{[T]}- \psi\|_{\Sigma^q} < \epsilon$.

\item \emph{$L^q$-exactly controllable in time $T$} if, 
for every 
$\phi, \psi \in \Sigma^q$, there exists $u \in L^q([0,T],\mathbb{R}^m)$ such that the solution $y$ of \eqref{eq_generic_delay} with initial condition $\phi$ and control $u$ satisfies $y_{[T]}= \psi$.

\item \emph{$L^q$-approximately controllable from the origin} if, for every $\epsilon > 0$ and $\psi \in \Sigma^q$, there exist $T_{\epsilon,\psi} > 0$ and $u \in L^q([0,T_{\epsilon,\phi}],\mathbb{R}^m)$ such that the solution $y$ of \eqref{eq_generic_delay} with initial condition $0$ and control $u$ satisfies $\|y_{[T_{\epsilon,\psi}]} - \psi\|_{\Sigma^q} < \epsilon$.

\item \emph{$L^q$-exactly controllable from the origin} if, for every  $\psi \in \Sigma^q$, there exist $T_{\psi}>0$ and $u \in L^q([0,T_{\psi}],\mathbb{R}^m)$ such that the solution $y$ of \eqref{eq_generic_delay} with initial condition $0$ and control $u$ satisfies $y_{[T_{\psi}]} = \psi$.
\end{enumerate}
\end{definition}

Even if we are interested in approximate and exact controllability in a given time $T > 0$, we also 
introduce controllability from the origin in free time 
since this notion 
is the main one that is considered in realization theory. 

Based on Proposition~\ref{prop:equiv-Lq}, the notions of approximate and exact controllability for \eqref{eq:hyperbolic} are equivalent to the corresponding ones for \eqref{eq_generic_delay}, as stated in the following proposition.

\begin{proposition}
\label{prop:control-equiv}
Let $T > 0$ and $q \in [1, +\infty]$. System~\eqref{eq:hyperbolic} is $L^q$-approximate (respectively, exactly) controllable in time $T$ if and only if the same is true for System~\eqref{eq_generic_delay}.
\end{proposition}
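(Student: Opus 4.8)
The plan is to deduce the equivalence directly from the $L^q$-level correspondence between solutions of \eqref{eq:hyperbolic} and \eqref{eq_generic_delay} established in Proposition~\ref{prop:equiv-Lq}, together with the fact that the change of variables $\mathcal T_q\colon \Sigma^q \to L^q((0,1),\mathbb R^n)$ is a bounded linear isomorphism with bounded inverse. The point is that $\mathcal T_q$ intertwines the two dynamics at every time $t\in[0,T]$ for a fixed control $u$, so it also intertwines the corresponding reachable sets.

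\textbf{Key steps.} First I would fix $q\in[1,+\infty]$, $T>0$, and set $\Phi=\bar R$, $\Psi$ arbitrary in $L^q((0,1),\mathbb R^n)$, and let $\phi=\mathcal T_q^{-1}\bar R\in\Sigma^q$. By Proposition~\ref{prop:ex-sol-Lq}, for any $u\in L^q((0,T),\mathbb R^m)$ the solution $R$ of \eqref{eq:hyperbolic} with initial condition $\bar R$ and control $u$ satisfies $R(t,\cdot)\in L^q((0,1),\mathbb R^n)$ for all $t\in[0,T]$; let $y$ be the solution of \eqref{eq_generic_delay} given by \eqref{eq:y-from-R}, whose initial condition is $\phi$ (this matches the formula defining $\mathcal T_q^{-1}$), and whose control is the same $u$. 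Proposition~\ref{prop:equiv-Lq} then gives $R(t,\cdot)=\mathcal T_q y_{[t]}$ for every $t\in[0,T]$; in particular $R(T,\cdot)=\mathcal T_q y_{[T]}$. Conversely, starting from an arbitrary initial condition $\phi\in\Sigma^q$ for \eqref{eq_generic_delay}, setting $\bar R=\mathcal T_q\phi$ and using the second half of Proposition~\ref{prop:equiv-Lq}, the solution $R$ of \eqref{eq:hyperbolic} with initial condition $\bar R$ and control $u$ satisfies $R(T,\cdot)=\mathcal T_q y_{[T]}$ as well. Thus, for a fixed pair (initial condition, control), the terminal state of \eqref{eq:hyperbolic} is the image under the isomorphism $\mathcal T_q$ of the terminal state of \eqref{eq_generic_delay}, and the map $\bar R\mapsto \phi=\mathcal T_q^{-1}\bar R$ is a bijection between the respective sets of initial conditions.

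\textbf{Conclusion from the isomorphism.} Exact controllability: \eqref{eq_generic_delay} is $L^q$-exactly controllable in time $T$ iff for all $\phi,\psi\in\Sigma^q$ there is $u$ with $y_{[T]}=\psi$; applying $\mathcal T_q$ and using $R(T,\cdot)=\mathcal T_q y_{[T]}$, $\bar R=\mathcal T_q\phi$, and the surjectivity of $\mathcal T_q$ onto $L^q((0,1),\mathbb R^n)$, this is equivalent to: for all $\bar R,\Psi\in L^q((0,1),\mathbb R^n)$ there is $u$ with $R(T,\cdot)=\Psi$, i.e.\ \eqref{eq:hyperbolic} is $L^q$-exactly controllable in time $T$. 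Approximate controllability: since $\mathcal T_q$ and $\mathcal T_q^{-1}$ are bounded, $\|R(T,\cdot)-\Psi\|_{[0,1],q}=\|\mathcal T_q(y_{[T]}-\psi)\|_{[0,1],q}\le \|\mathcal T_q\|\,\|y_{[T]}-\psi\|_{\Sigma^q}$ and symmetrically $\|y_{[T]}-\psi\|_{\Sigma^q}\le\|\mathcal T_q^{-1}\|\,\|R(T,\cdot)-\Psi\|_{[0,1],q}$ with $\psi=\mathcal T_q^{-1}\Psi$; hence one can make one quantity smaller than a prescribed $\epsilon$ iff one can make the other smaller than a (rescaled) $\epsilon$, giving the equivalence of $L^q$-approximate controllability in time $T$ for the two systems.

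\textbf{Main obstacle.} There is no serious obstacle: the work has essentially been done in Propositions~\ref{prop:equiv-Lq} and~\ref{prop:ex-sol-Lq} and in the construction of $\mathcal T_q$. The only point requiring a little care is bookkeeping — checking that the initial condition of the difference-equation solution produced by \eqref{eq:y-from-R} is exactly $\mathcal T_q^{-1}\bar R$ (so that, e.g., an initial condition $0$ for one system corresponds to $0$ for the other, and arbitrary initial conditions are matched bijectively), and that the same control $u$ is used on both sides, so that the reachable sets correspond under $\mathcal T_q$ with no loss.
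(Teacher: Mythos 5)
Your argument is correct and is exactly the route the paper takes: the statement is presented there as an immediate consequence of the $L^q$ correspondence in Proposition~\ref{prop:equiv-Lq} (together with Proposition~\ref{prop:ex-sol-Lq}), i.e.\ the boundedly invertible operator $\mathcal T_q$ intertwines the two dynamics for the same control, so reachable sets and approximate reachability transfer back and forth. Your extra bookkeeping on matching initial conditions via $\mathcal T_q^{-1}\bar R$ is just the detail the paper leaves implicit.
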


Thanks to Proposition~\ref{prop:control-equiv}, we consider from now on controllability issues mainly within the framework of linear difference equations, as defined in System~\eqref{eq_generic_delay}.

By linearity of System~\eqref{eq_generic_delay}, its approximate and exact controllability in time $T$ can be characterized in terms of the operator $E_q(T)$ and its dual 
operator 
$E_q(T)^*$ (that is, reducing to the case where the initial condition is zero), as stated below.

\begin{proposition}
\label{prop:first_charact_control1}
 Let $T > 0$ and $q \in [1,+\infty)$. Then 
\begin{enumerate}
\item The following assertions are equivalent:
\begin{enumerate}[({1}.a)]
\item\label{item:AC} System~\eqref{eq_generic_delay} is $L^q$-approximately controllable in time $T$;
\item\label{item:RangeDense} $\Ran E_q(T)$ is dense in $\Sigma^q$;
\item\label{item:DualInjective} The operator $E_q(T)^*$ is injective.
\end{enumerate}
\item The following assertions are equivalent:
\begin{enumerate}[({2}.a)]
\item\label{item:EC} System~\eqref{eq_generic_delay} is $L^q$-exactly controllable in time $T$;
\item\label{item:RangeAll} $\Ran E_q(T) = \Sigma^q$;
\item\label{item:DualInequality} The operator $E_q(T)^*$ is bounded below, i.e., there exists $c > 0$ such that
\begin{equation}
\label{eq:observability-inequality}
\norm{E_q(T)^* y}_{[0,T],\,q'} \ge c \norm{y}_{\Sigma^{q'}},\qquad y \in \Sigma^{q'}.
\end{equation}
where $q'$ denotes the conjugate exponent of $q$.
\end{enumerate}
\end{enumerate}
\end{proposition}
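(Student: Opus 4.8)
The plan is to reduce, by means of the variation of constants formula of Proposition~\ref{prop_var_const_formula}, all four controllability statements to statements about the range of the single bounded operator $E_q(T)$, and then to invoke classical duality results for bounded linear operators between Banach spaces. Thus the proof splits into two independent pieces: an elementary reduction using linearity, and the translation of range conditions into properties of the adjoint $E_q(T)^*$.

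First I would establish the equivalences \ref{item:AC}$\iff$\ref{item:RangeDense} and \ref{item:EC}$\iff$\ref{item:RangeAll}. By Proposition~\ref{prop_var_const_formula}, the state at time $T$ of the solution of \eqref{eq_generic_delay} with initial condition $\phi \in \Sigma^q$ and control $u \in L^q([0,T],\mathbb{R}^m)$ is $y_{[T]} = \Upsilon_q(T)\phi + E_q(T)u$; since $\Upsilon_q(T)$ maps $\Sigma^q$ into itself, the set of states reachable from $\phi$ in time $T$ is exactly the affine subspace $\Upsilon_q(T)\phi + \Ran E_q(T)$. Consequently, approximate controllability in time $T$ --- namely that every $\psi \in \Sigma^q$ lie in the closure of $\Upsilon_q(T)\phi + \Ran E_q(T)$ for every $\phi$ --- holds if and only if $\overline{\Ran E_q(T)} = \Sigma^q$ (take $\phi = 0$ for one implication, and note that $\psi - \Upsilon_q(T)\phi$ ranges over all of $\Sigma^q$ for the converse), and likewise exact controllability in time $T$ is equivalent to $\Ran E_q(T) = \Sigma^q$.

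It then remains to translate the two range conditions into properties of $E_q(T)^*$. Since $q \in [1,+\infty)$, the dual of $\Sigma^q = \prod_{i=1}^n L^q((-\tau_i,0),\mathbb{R})$ is isometrically $\Sigma^{q'}$, and by Proposition~\ref{prop_operator_dual_E} the adjoint $E_q(T)^*$ is precisely the operator in \eqref{eq:op_E^*}, acting from $\Sigma^{q'}$ into $L^{q'}([0,T],\mathbb{R}^m)$. For a bounded linear operator $E$ between Banach spaces, the Hahn--Banach theorem gives that $\Ran E$ is dense if and only if $E^*$ is injective, which yields \ref{item:RangeDense}$\iff$\ref{item:DualInjective}. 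For the exact case, one uses the standard fact --- a consequence of the open mapping theorem, or equivalently of the closed range theorem combined with the density characterization just recalled --- that $E$ is surjective if and only if $E^*$ is bounded below, i.e.\ $\|E^* y\| \ge c\|y\|$ for some $c>0$ and all $y$; this gives \ref{item:RangeAll}$\iff$\ref{item:DualInequality}, the inequality being exactly the observability estimate \eqref{eq:observability-inequality}.

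None of these steps is genuinely difficult: the substance lies entirely in the variation of constants formula and in the explicit form of $E_q(T)^*$, both already available. The only point worth a moment's care is the surjectivity $\iff$ bounded-below-adjoint equivalence when $q = 1$, where $\Sigma^q$ and its dual $\Sigma^{\infty}$ are non-reflexive; but the argument through the open mapping theorem does not rely on reflexivity, so this raises no obstruction, and the proof amounts to assembling these classical ingredients with the reduction of the first step.
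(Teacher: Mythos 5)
Your proposal is correct and follows essentially the same route as the paper: the equivalences \ref{item:AC}$\iff$\ref{item:RangeDense} and \ref{item:EC}$\iff$\ref{item:RangeAll} are obtained directly from the representation formula \eqref{eq:variation_constant}, and the remaining equivalences are the classical duality facts (dense range $\iff$ injective adjoint, surjectivity $\iff$ adjoint bounded below, the latter valid without reflexivity), which the paper simply cites from Brezis (Corollary~2.18 and Theorem~2.20). Your extra remark on the case $q=1$ is accurate but not needed beyond what those cited results already cover.
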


\begin{proof}
On the one hand, 
the equivalences between \ref{item:AC} and \ref{item:RangeDense}, and between \ref{item:EC} and \ref{item:RangeAll}, follow directly from \eqref{eq:variation_constant}. 
On the other hand, the equivalences between \ref{item:RangeDense} and \ref{item:DualInjective}, and between \ref{item:RangeAll} and \ref{item:DualInequality} follow from classical functional analysis arguments: the first one is a consequence of \cite[Corollary~2.18]{Brezis2011Functional}, while the second one follows from \cite[Theorem~2.20]{Brezis2011Functional}.
\end{proof}

Inequalities such as \eqref{eq:observability-inequality} are usually known as \emph{observability inequalities} due to their link with the observability problem of the corresponding dual system, and they are one of the most common tools used to characterize the controllability of linear PDEs \cite{Tucsnak_Weiss, coron}.
In particular, in \cite{Chitour2020Approximate}, the controllability of the 
linear difference equation~\eqref{system_lin_formel2} in dimension $2$ with $N = 2$ delays is characterized thanks to an inequality of the form \eqref{eq:observability-inequality}. However, an extension of such an approach to systems in higher dimension or with more delays seems difficult due to the 
combinatorics underneath the operator $E_{q}(T)$. This is why we adopt here another approach, based on Hautus-type criteria in the frequency domain, as done in \cite{ChitourHautus} for the 
linear difference equation~\eqref{system_lin_formel2}.

\begin{remark}\label{rem:commensurable}
In the case where the delays $\tau_1,\dots,\tau_n$ are commensurable (i.e., all their pairwise ratios are rational),
it is well-known that
it is possible to reformulate  
\eqref{eq_generic_delay} as an equivalent difference equation with a single delay, at the price of augmenting the dimension of the state space (see, e.g., \cite[Section~3]{Chitour2020Approximate}). Once this is performed, it easily follows that $L^q$-approximate and exact controllability are equivalent (and independent of $q\in [1,+\infty]$) and can be checked by a Kalman criterion. 
\end{remark}

\section{Upper bound on the controllability time}\label{sec:upper-bound-time}

We prove in this section that controllability from the origin 
and controllability in finite time $\tau_1+\dots+\tau_n$ are equivalent. This property is obtained adapting the proof of 
\cite[Theorem~4.7]{ChitourHautus} and relies on Lemma~\ref{lem:1}.

\begin{theorem}
\label{th_appr_minimal_time}
\label{lem:RanE_con}
Set $T_*:=\tau_1+\cdots+\tau_n$. For all $T\geq T_*$ and  $q \in [1,+\infty)$, we have
\begin{equation}
\label{eq:RanE_con1}
\Ran E_{q}(T)=\Ran E_{q}(T_*).
\end{equation}
In particular, 
System~\eqref{eq_generic_delay} is approximately (respectively, exactly) controllable from the origin if and only if it is approximately (respectively, exactly) controllable in time $T$ for every  $T\ge T_*$.
\end{theorem}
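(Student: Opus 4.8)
The plan is to show the single key identity \eqref{eq:RanE_con1}, since the final statement then follows immediately: by Remark~\ref{remk:RanETnondecre}, $\Ran E_q(T)$ is non-decreasing in $T$, so approximate (resp.\ exact) controllability from the origin — which by definition means $\overline{\bigcup_{T>0}\Ran E_q(T)} = \Sigma^q$ (resp.\ $\bigcup_{T>0}\Ran E_q(T) = \Sigma^q$) — is equivalent to $\overline{\Ran E_q(T_*)} = \Sigma^q$ (resp.\ $\Ran E_q(T_*) = \Sigma^q$), which by Proposition~\ref{prop:first_charact_control1} is exactly $L^q$-approximate (resp.\ exact) controllability in time $T_*$, and hence in time $T$ for every $T \ge T_*$ since $\Ran E_q(T_*) \subseteq \Ran E_q(T) \subseteq \overline{\bigcup_{T'>0}\Ran E_q(T')} = \overline{\Ran E_q(T_*)}$ forces equality at the level of ranges for exact controllability and of closures for approximate controllability.

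For the identity itself, the inclusion $\Ran E_q(T_*) \subseteq \Ran E_q(T)$ is Remark~\ref{remk:RanETnondecre}, so the work is the reverse inclusion $\Ran E_q(T) \subseteq \Ran E_q(T_*)$ for $T \ge T_*$. First I would reduce to showing $\Ran E_q(T_* + \varepsilon) \subseteq \Ran E_q(T_*)$ for all small $\varepsilon > 0$ (an arbitrary $T$ is handled by iterating, or directly by the same argument). Fix $u \in L^q([0, T_*+\varepsilon], \mathbb{R}^m)$ and set $z = E_q(T_*+\varepsilon)u \in \Sigma^q$; using the explicit formula for $E_q$ from Definition~\ref{def_Upsilon_E}, one has for $i \in \llbracket 1,n\rrbracket$ and $t \in [-\tau_i, 0]$
\[
(E_q(T_*+\varepsilon)u)_i(t) = e_i^T \sum_{\substack{\ell \in \mathbb{N}^n \\ \tau\cdot\ell \le T_*+\varepsilon+t}} \Xi_\ell B\, u(T_*+\varepsilon+t-\tau\cdot\ell).
\]
The goal is to produce $v \in L^q([0, T_*], \mathbb{R}^m)$ with $E_q(T_*)v = z$. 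The strategy, following \cite[Theorem~4.7]{ChitourHautus}, is to use Lemma~\ref{lem:1} to rewrite each coefficient $e_j^T \Xi_\ell$ with $\ell$ outside the "small" index set $\{\ell' : \max_i \ell'_i \le 1 \text{ and } \ell'_j = 0\}$ as a finite linear combination of $e_j^T \Xi_{\ell-k}$, $k \in \{0,1\}^n\setminus\{0\}$, and thereby collapse the contribution of all the large-$\ell$ terms — which are precisely the ones responsible for requiring time beyond $T_*$ — into a redefinition of the control on the window $[0, T_*]$. Concretely, one defines $v$ by an explicit formula obtained by substituting the Lemma~\ref{lem:1} relation into the sum and regrouping so that every surviving term has $\tau\cdot\ell \le T_*$; the bound $\|\Xi_\ell\| \le (n\|K\|)^{|\ell|}$ noted in the proof of Lemma~\ref{lem:1}, together with the finiteness of $\{k \in \{0,1\}^n\}$, guarantees the regrouped series converges and that $v \in L^q$.

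The main obstacle is the bookkeeping in this regrouping: the index set $\{\ell \in \mathbb{N}^n : \tau\cdot\ell \le T_*+\varepsilon\}$ is not finite when the $\tau_i$ are incommensurable, and one must verify that after applying Lemma~\ref{lem:1} repeatedly (once is not enough — a single application lowers $\max_i \ell_i$ or clears the $j$th coordinate only one step at a time, so an induction on, say, $|\ell|$ or on $\max_i \ell_i$ is needed) the cancellations are exact and leave only terms supported on $[0,T_*]$, with no leftover boundary contributions. The delicate point, inherited from \cite{ChitourHautus} but genuinely finer here because of the structured matrix $K e_j e_j^T$ appearing in \eqref{eq_generic_delay} rather than a generic $A_j$, is that the relation in Lemma~\ref{lem:1} holds for $e_j^T \Xi_\ell$ only under the hypothesis $\max_i \ell_i \ge 2$ or $\ell_j = 1$, so one must track componentwise which rows $e_j^T$ are being eliminated and argue that the set of "irreducible" indices $\{\ell : \max_i \ell_i \le 1,\ \ell_j = 0\}$ — equivalently $\ell \in \{0,1\}^n$ with $\ell_j = 0$ — contributes only delays $\tau\cdot\ell \le \sum_{i\ne j}\tau_i \le T_* - \tau_j \le T_* + t$ for $t \in [-\tau_j, 0]$, so these terms are already admissible for $E_q(T_*)$. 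Once the induction is set up correctly, the rest is the routine $L^q$-convergence estimate.
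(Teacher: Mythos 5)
Your overall strategy (split $(E_q(T)u)_j(t)$ into the part with $\tau\cdot\ell \le T_*+t$ and a tail, then use Lemma~\ref{lem:1} to fold the tail into a modified control on $[0,T_*]$) is indeed the paper's, but two points in your write-up are genuine gaps. First, the claim that "an arbitrary $T$ is handled by iterating, or directly by the same argument" is not justified. The one-application argument only works for $T-T_*\le\tau_{\min}$ (see below), and you cannot simply iterate the resulting identity $\Ran E_q(T)=\Ran E_q(T_*)$ for $T\in[T_*,T_*+\tau_{\min}]$ to reach larger $T$, because $E_q$ has no semigroup property in $T$: the correct composition law is the variation-of-constants formula \eqref{eq:variation_constant}, which gives $E_q(T_1+T_2)u=\Upsilon_q(T_2)\,E_q(T_1)\,u|_{[0,T_1]}+E_q(T_2)\,u(\cdot+T_1)$, so what is additionally needed is the invariance $\Upsilon_q(t)\Ran E_q(T_*)\subseteq\Ran E_q(T_*)$ for $t\in[0,\tau_{\min}]$. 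The paper derives this invariance from the first step (extend a control by zero on $[T_*,T_*+t]$) and then iterates over windows of length $\tau_{\min}$; your proposal contains neither this step nor a worked-out alternative (a direct argument for large $T$ would require repeated applications of Lemma~\ref{lem:1} with bookkeeping you only gesture at).

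Second, your description of the core computation is off in ways that suggest the closing mechanism has not been identified. The index set $\{\ell\in\mathbb N^n:\tau\cdot\ell\le T+t\}$ is always finite (each $\ell_i\le (T+t)/\tau_i$), regardless of commensurability, so there is no series to make converge and the bound $\|\Xi_\ell\|\le(n\|K\|)^{|\ell|}$ plays no role. Moreover, in the regime you reduced to ($T-T_*\le\tau_{\min}$), a single application of Lemma~\ref{lem:1} suffices: if $\tau\cdot\ell>T_*+t$ with $t\in[-\tau_j,0]$, then $\tau\cdot\ell>T_*-\tau_j$, so the hypothesis of the lemma holds for the row $e_j^T$, and after the substitution $\ell'=\ell-k$ with $k\in\{0,1\}^n\setminus\{0\}$ one gets $\tau\cdot\ell'\le T+t-\tau_{\min}\le T_*+t$, i.e., every surviving index is already admissible for $E_q(T_*)$. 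This is exactly what lets the paper define the explicit controls $u_1(s)=u(s+T-T_*)$ and $u_2(s)=-\sum_{k}\alpha_k\,u(s-\tau\cdot k+T-T_*)$ (over the admissible $k$) and conclude $E_q(T)u=E_q(T_*)(u_1+u_2)$ in one shot; no induction on $|\ell|$ or on $\max_i\ell_i$ is needed at this stage. Your remark that the irreducible indices satisfy $\tau\cdot\ell\le T_*-\tau_j\le T_*+t$ is correct and is the reason the lemma applies, but as written the rest of your bookkeeping does not close the argument, and the passage to all $T\ge T_*$ is missing.
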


\begin{proof}
Note that the last part of the statement follows straightforwardly from \eqref{eq:RanE_con1} and Remark~\ref{remk:RanETnondecre}.

The proof of Equation~\eqref{eq:RanE_con1} is divided in two steps. We first prove that \eqref{eq:RanE_con1} is satisfied for $T \in [T_*, T_* + \tau_{\min}]$, where $\tau_{\min}=\min_{j=1,\dots,m}\tau_j$. We then deduce from the flow formula \eqref{eq:variation_constant} that \eqref{eq:RanE_con1} is actually satisfied for all $T\geq  T_*$.

Let $T > T_*$ and $u \in L^{q}([0,T], \mathbb{R}^m)$.
We define
\begin{equation}
\label{eq:RanE_con2}
u_1(s):=u(s+T-T_*),\qquad \mbox{ $s \in [0,T_*]$,}
\end{equation}
and
\begin{equation}
\label{eq:RanE_con3}
u_2(s):=  -\sum_{\substack{k \in \{0, 1\}^n \setminus \{(0, \dotsc, 0)\} \\ s < \tau \cdot k \le s + T - T_\ast}} \alpha_k u(s - \tau \cdot k + T - T_\ast), \qquad \mbox{$s \in [0,T_*]$},
\end{equation}
where the coefficients  $\alpha_k$ are defined as in Proposition~\ref{lem:1}. 

For $j\in \llbracket 1,n\rrbracket$ and  
$t \in [-\tau_j,0]$, we have
\begin{align}
\left(E_{q}(T)u \right)_j(t)&= \sum_{\substack{\ell \in \mathbb{N}^n \\ \tau \cdot \ell \le {T+t}}}  e_j^T\Xi_\ell B u(T+t-\tau \cdot \ell) \nonumber\\
&=\sum_{\substack{\ell \in \mathbb{N}^n \\ \tau \cdot \ell \le T_*+t}}  e_j^T\Xi_\ell B u(T+t-\tau \cdot \ell)+\sum_{\substack{\ell \in \mathbb{N}^n \\ T_*+t<\tau \cdot \ell \le T+t}} e_j^T \Xi_\ell B u(T+t-\tau \cdot \ell).
\label{eq:RanE_con4}
\end{align}
The first sum of the right-hand side of \eqref{eq:RanE_con4} satisfies
\begin{equation}
\label{eq:RanE_con5}
\sum_{\substack{\ell \in \mathbb{N}^n \\ \tau \cdot \ell \le T_*+t}}  e_j^T\Xi_\ell B u(T+t-\tau \cdot \ell)=(E_{{q}}(T_*)u_1)_j(t).
\end{equation}

Note that, if $\tau \cdot \ell > T_\ast + t$, since $t \in [-\tau_j, 0]$, then $\tau \cdot \ell > T_\ast - \tau_j$, and thus we have $\max_{i \in \llbracket 1, n\rrbracket} \ell_i \geq 2$ or $\ell_j = 1$. We deduce from Lemma~\ref{lem:1} that the second sum of \eqref{eq:RanE_con4} can be written as 
\begin{multline}
\label{eq:RanE_con6}
\sum_{\substack{\ell \in \mathbb{N}^n \\ T_\ast + t < \tau \cdot \ell \le T + t}} e_j^T \Xi_\ell B u(T+t-\tau \cdot \ell)
\\ =-\sum_{\substack{\ell \in \mathbb{N}^n \\ T_\ast + t < \tau \cdot \ell \le T + t}} 
\sum_{k \in \{0, 1\}^n \setminus \{(0, \dotsc, 0)\}} \alpha_k e_j^T \Xi_{\ell - k} B u(T + t -\tau \cdot \ell).
\end{multline}
The substitution $\ell' = \ell - k$ in Equation~\eqref{eq:RanE_con6} yields
\begin{multline}
\label{eq:RanE_con7}
\sum_{\substack{\ell \in \mathbb{N}^n \\ T_\ast + t < \tau \cdot \ell \le T + t}} e_j^T \Xi_\ell B u(T+t-\tau \cdot \ell) \\ = -\sum_{k \in \{0, 1\}^n \setminus \{(0, \dotsc, 0)\}} \sum_{\substack{\ell' \in \mathbb{N}^n \\ T_\ast + t < \tau \cdot (\ell' + k) \le T + t}}  \alpha_k e_j^T \Xi_{\ell'} B u(T + t - \tau \cdot (\ell' + k)).
\end{multline}

Note that, for all $T \in [T_\ast, T_\ast + \tau_{\min}]$, $k \in \{0, 1\}^n \setminus \{(0, \dotsc, 0)\}$, and $\ell'\in \mathbb{N}^n$, if $\tau \cdot (\ell' + k) \le T + t$, then $\tau\cdot \ell'\leq T_*+t$. Hence, for $T \in [T_\ast, T_\ast + \tau_{\min}]$, we can rewrite Equation~\eqref{eq:RanE_con7} as
\begin{align}
\MoveEqLeft[6] \sum_{\substack{\ell \in \mathbb{N}^N \\ T_\ast + t < \tau \cdot \ell \le T + t}}  e_j^T \Xi_\ell B u(T + t - \tau \cdot \ell) \nonumber \\
& = -\sum_{\substack{\ell' \in \mathbb{N}^n \\ \tau \cdot \ell' \le T_\ast + t}} e_j^T \Xi_{\ell'} B 
\sum_{\substack{k \in \{0, 1\}^n \setminus \{(0, \dotsc, 0)\} \\ T_\ast + t < \tau \cdot (\ell'+k) \le T + t}} \alpha_k u(T + t - \tau \cdot (\ell' + k))
\nonumber\\
& = (E_{{q}}(T_\ast)u_2)_j(t).
\label{eq:RanE_con8}
\end{align}
Equations~\eqref{eq:RanE_con4}, \eqref{eq:RanE_con5}, and \eqref{eq:RanE_con8} prove that, for $T \in [T_\ast,T_\ast+\tau_{\min}]$ and 
$t \in [-\tau_j,0]$,
\begin{equation}
\label{eq:RanE_con9}
(E_{q}(T)u)_j(t)=(E_{q}(T_\ast)u_1)_j(t)+(E_{q}(T_\ast)u_2)_j(t)=(E_{q}(T_\ast)(u_1+u_2))_j(t),
\end{equation}
and thus
\begin{equation}
\label{eq:RanE_con10}
\Ran E_{q}(T)=\Ran E_{q}(T_\ast), \qquad T \in [T_\ast, T_\ast+\tau_{\min}].
\end{equation}

Let us now extend Equation~\eqref{eq:RanE_con10} to all  $T \in [T_*,+\infty)$. Let $V=\Ran E_{q}(T_*)$ and $x \in V$. Fix  $u\in L^q([0,T_*],\mathbb{R}^m)$ such that $x=E_{q}(T_*)u$. For $t\in [0,\tau_{\min}]$, define $\tilde{u}\in L^q([0,T_\ast+t],\mathbb{R}^m)$ by setting 
$\tilde{u}|_{[0,T_\ast]}=u$
 and ${\tilde{u}}(s)=0$ for $s \in [T_\ast,T_\ast+t]$. From 
 formula \eqref{eq:variation_constant}, we have
 \begin{equation*}
Upsilon_{q}(t) x= E_{q}(T_\ast+t)\tilde{u} \in \Ran E_{q}(T_\ast+t).
 \end{equation*}
Thanks to   Equation~\eqref{eq:RanE_con10} we have proved that
\begin{equation}\label{eq:invariance}
 \Upsilon_{q}(t)x \in V \qquad \text{ for all } t\in[0,\tau_{\min}],\ x\in V.
 \end{equation}

 Let $y \in \Ran E_{q}(T)$ for $T \in [T_\ast+\tau_{\min},T_\ast+2 \tau_{\min}]$ and $u \in L^q([0,T],\mathbb{R}^m)$ such that $y=E_{q}(T)u$. Define $z=E_{q}(T_\ast+\tau_{\min})u|_{[0,T_\ast+\tau_{\min}]} \in V$. 
 Equation~\eqref{eq:variation_constant} gives
\begin{equation}
\label{eq_lem3:4}
y=\Upsilon_{q}(T-T_\ast-\tau_{\min})z+E_{q}(T-T_\ast-\tau_{\min})\check{u}, 
\end{equation}
where $\check{ u}
(\alpha)=u(\alpha+T_\ast+\tau_{\min})$ for $\alpha \in [0,T-T_\ast-\tau_{\min}]$. We deduce 
from \eqref{eq:RanE_con10} and \eqref{eq:invariance}
that $y \in \Ran E_{q}(T_\ast)$, proving  that 
 \begin{equation}
 \label{eq_lem3:5}
  \Ran E_{q}(T_\ast)=\Ran E_{q}(T), \qquad T \in [T_\ast+\tau_{\min},T_\ast+2 \tau_{\min}].
 \end{equation}
The iteration of the same process proves that Equation~\eqref{eq_lem3:5} actually holds for all $T\ge T_\ast$.
\end{proof}

\section{Controllability criteria}
\label{sec:control_delay_system} 

\subsection{Statement of the controllability criteria}

 We are now in position to state our main results about 
  the approximate and exact controllability of System~\eqref{eq:hyperbolic}.
Denote by   $H\colon \mathbb{C}\to \mathcal{M}_{n, n}(\mathbb{C})$ the function defined by 
\begin{equation}\label{eq:Hp}
H(p)
= \diag(e^{p\tau_1},\dots,e^{p\tau_n})-K,\qquad p\in \mathbb{C}.
\end{equation}

\begin{theorem}
\label{Th_appro_con}
Let $q \in [1,+\infty)$. System~\eqref{eq:hyperbolic} is $L^q$-approximately controllable in time $\tau_1 + \dotsb + \tau_n$ if and only if $\rank [K,B]=n$ and one of the following equivalent assertions holds true:
\begin{enumerate}
\item\label{item:AC-1} $\rank  \left[H(p),B\right]=n$ for every $p\in \mathbb{C}$;
\item\label{item:AC-2} For every $p \in \mathbb{C}$, one has
\begin{equation*}
 \inf \left\{ \vertii{g^T H(p)}+\vertii{g^TB} \mid g\in \mathbb{C}^n,\;\|g^T \|=1 \right\} >0;
\end{equation*}
\item\label{item:AC-3} For every $p \in \mathbb{C}$, one has
\begin{equation*}
\det \left( H(p) H(p)^*+B  B^* \right) >0 .
\end{equation*}
\end{enumerate}

\end{theorem}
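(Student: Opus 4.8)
The plan is to reduce $L^q$-approximate controllability of \eqref{eq:hyperbolic} to that of the difference equation \eqref{eq_generic_delay} via Proposition~\ref{prop:control-equiv}, and then to the density of $\Ran E_q(\tau_1+\dots+\tau_n)$ in $\Sigma^q$ via Proposition~\ref{prop:first_charact_control1}, which by Theorem~\ref{th_appr_minimal_time} equals $\Ran E_q(T)$ for all $T \geq T_*$, i.e.\ amounts to approximate controllability from the origin. Then I would invoke Yamamoto's realization theory (referenced in the introduction) to translate this statement into the frequency domain. The first preliminary step is therefore to recognize \eqref{eq_generic_delay} as an input-output system over the convolution algebra $M_+(\mathbb R)$: writing the delay operators as convolutions with Diracs $\delta_{\tau_i}$, the ``transfer function'' of the system is $H(p)^{-1}B = (\diag(e^{p\tau_1},\dots,e^{p\tau_n}) - K)^{-1}B$, and approximate controllability from the origin is characterized by the absence of common ``zeros'' of the pencil $[H(p),B]$ together with a rank condition at infinity. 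I expect the rank condition $\rank[K,B]=n$ to be exactly the ``reachability at $t=0^+$'' / behavior-at-infinity condition: as $\Re p \to +\infty$, $\diag(e^{p\tau_i})$ dominates, so the obstruction comes from the constant term $K$ paired with $B$, and one needs $[K,B]$ to have full row rank for the initial jump of the state to span all of $\mathbb R^n$.

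The core analytic step is the equivalence ``$\Ran E_q(T_*)$ dense in $\Sigma^q$'' $\iff$ ``$\rank[K,B]=n$ and $\rank[H(p),B]=n$ for all $p\in\mathbb C$.'' For the necessity direction, I would dualize: by Proposition~\ref{prop:first_charact_control1}, density fails iff there is a nonzero $y\in\Sigma^{q'}$ with $E_q(T_*)^*y=0$; using the explicit formula \eqref{eq:op_E^*} for $E_q(T_*)^*$ and the generating-function identity \eqref{eq:lem1:2} (i.e.\ $\sum_\ell \Xi_\ell t^\ell = (I_n - K(t))^{-1}$, which upon the substitution $t_i \mapsto e^{-p\tau_i}$ becomes $e^{p\cdot}$-weighted and relates to $H(p)^{-1}$), one converts the vanishing of $E_q(T_*)^*y$ into the existence of a common left-null vector $g(p)$ of $H(p)$ and $B$ at some frequency $p$ — or into a deficiency of $[K,B]$. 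Concretely, if $g^TH(p)=0$ and $g^TB=0$ for some $p$, then $t\mapsto g^Te^{pt}$ (suitably restricted to each interval $[-\tau_i,0)$) is a nonzero element of the kernel of $E_q(T_*)^*$, proving non-controllability; this is the ``easy'' Hautus-type necessity. For sufficiency one argues the converse: if the pencil has full rank everywhere and $[K,B]$ has full row rank, then $E_q(T_*)^*$ is injective. This is where Yamamoto's theory does the heavy lifting — identifying $\overline{\Ran E_q(T_*)}^\perp$ with the annihilator of the reachable subspace of the realization, and showing this annihilator is trivial precisely under the stated rank conditions.

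Having established the equivalence with assertion \ref{item:AC-1}, the equivalences \ref{item:AC-1} $\iff$ \ref{item:AC-2} $\iff$ \ref{item:AC-3} are elementary linear algebra applied pointwise in $p$: for a fixed matrix $[H(p),B] \in \mathcal M_{n,n+m}(\mathbb C)$, full row rank $n$ is equivalent to the nonexistence of a nonzero $g$ with $g^T[H(p),B]=0$, which (by compactness of the unit sphere in $\mathbb C^n$ and continuity) is equivalent to the strict positivity of the infimum in \ref{item:AC-2}, and also to the invertibility of the Gram-type matrix $H(p)H(p)^* + BB^*$, i.e.\ to $\det(H(p)H(p)^*+BB^*)>0$ as in \ref{item:AC-3} (note $g^T(H(p)H(p)^*+BB^*)\bar g = \|g^TH(p)\|^2 + \|g^TB\|^2$). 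I would present these three as a short self-contained lemma. The main obstacle is the sufficiency half of the central equivalence: turning the frequency-domain nondegeneracy of $[H(p),B]$ into injectivity of $E_q(T_*)^*$ on $\Sigma^{q'}$ requires carefully setting up the Yamamoto realization framework — in particular verifying that the input-output map associated with \eqref{eq_generic_delay} is compatible with the state space $\Sigma^q$ (as the introduction emphasizes, this identification is ``the key point'') — and then quoting the appropriate approximate-reachability criterion from \cite{yamamoto1981realizationI,yamamoto1981realizationII,Yutaka_Yamamoto}; the bookkeeping relating $\Xi_\ell$, the resolvent $H(p)^{-1}$, and the support constraints in \eqref{eq:op_E^*} is delicate but routine once the framework is in place.
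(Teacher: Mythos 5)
Your proposal follows essentially the same route as the paper's proof: reduce to the difference equation via Proposition~\ref{prop:control-equiv}, use Theorem~\ref{th_appr_minimal_time} (and Proposition~\ref{prop:first_charact_control1}) to identify controllability in time $\tau_1+\dotsb+\tau_n$ with controllability from the origin, set up the input--output system $Q*z=P*u$ with $\widehat Q=H$, and then quote Yamamoto's quasi-reachability criterion, with the equivalence of the three assertions handled by elementary pointwise linear algebra. The only differences worth noting are that the paper inserts an intermediate reduction from $L^q$ to $L^2$ (by continuity of the input--output maps) before invoking Yamamoto's $L^2$ criterion, and that your heuristic for the extra condition $\rank[K,B]=n$ has the wrong limit: it corresponds to $\Re p\to-\infty$, where $H(p)\to -K$ in $\overline{H(\mathbb C)}$, not to $\Re p\to+\infty$, where $H(p)$ is dominated by the invertible diagonal factor.
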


\begin{theorem}
\label{Th_exact_con1}
System~\eqref{eq:hyperbolic} is $L^1$-exactly controllable in time $\tau_1 + \dotsb + \tau_n$ if and only if one
of the following
equivalent assertions
holds true:
\begin{enumerate}
\item\label{item:EC-1} $\rank  \left[M,B\right]=n$ for every $M \in \overline{H(\mathbb{C})}$;
\item\label{item:EC-2} There exists $\alpha>0$ such that, for every $p \in \mathbb{C}$,
\begin{equation*}
 \inf \left\{ \vertii{g^TH(p)}+\vertii{g^TB}\mid g\in \mathbb{C}^n,\;\|g^T\|=1 \right\} \ge \alpha;
\end{equation*}
\item\label{item:EC-3} There exists $\alpha>0$ such that, for every $p \in \mathbb{C}$,
\begin{equation*}
\det \left( H(p) H(p)^*+B  B^* \right) \ge \alpha .
\end{equation*}
\end{enumerate}

\end{theorem}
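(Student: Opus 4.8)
By Proposition~\ref{prop:control-equiv}, it suffices to characterize $L^1$-exact controllability of the difference equation~\eqref{eq_generic_delay} in time $T_* = \tau_1 + \dotsb + \tau_n$. By Proposition~\ref{prop:first_charact_control1}, item~(2), this is equivalent to $\Ran E_1(T_*) = \Sigma^1$, equivalently to the observability inequality $\norm{E_1(T_*)^* y}_{[0, T_*], \infty} \geq c \norm{y}_{\Sigma^\infty}$ for $y \in \Sigma^\infty$. The core of the argument is to recognize $E_1(T_*)$ (or rather the associated input--output map) as the reachability map of a well-posed infinite-dimensional linear system to which Yamamoto's realization theory applies, and then to transcribe the functional-analytic characterization of surjectivity of the reachability map into the stated frequency-domain (Hautus) condition involving $H(p) = \diag(e^{p\tau_1}, \dotsc, e^{p\tau_n}) - K$.

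First I would set up the input--output framework: writing \eqref{eq_generic_delay} in the convolution-algebra language of Section~\ref{sec:notation}, the system has transfer function $p \mapsto H(p)^{-1} B$ (wherever $H(p)$ is invertible), with $H(p)^{-1}$ meromorphic and the entries of $H$ lying in the algebra generated by $1$ and the $\widehat{\delta}_{\tau_i}(p) = e^{-p\tau_i}$. The key point, as announced in the introduction, is to identify an input--output system equivalent to~\eqref{eq_generic_delay} that is compatible with the state space $\Sigma^q$; once this identification is made, Yamamoto's theory provides a canonical realization whose reachable subspace at time $T_*$ coincides with $\Ran E_1(T_*)$ (using Theorem~\ref{th_appr_minimal_time} to pass from free-time controllability to controllability in time exactly $T_*$). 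Next I would invoke the abstract characterization of exact reachability in Yamamoto's framework: surjectivity of the reachability operator is equivalent to a spectral/Hautus-type condition on the symbol of the system, which here reads $\rank[M, B] = n$ for every $M$ in the closure of the range $H(\mathbb{C})$ of $H$ — the closure appearing precisely because exact (as opposed to approximate) controllability demands a uniform lower bound, and the limit points of $\{H(p) : p \in \mathbb{C}\}$ must be included. This yields item~\ref{item:EC-1}.

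The equivalence of items~\ref{item:EC-1}, \ref{item:EC-2}, and \ref{item:EC-3} is then a compactness-and-linear-algebra matter, parallel to the corresponding equivalences in Theorem~\ref{Th_appro_con}: for a fixed matrix pair $(M, B)$, $\rank[M, B] = n$ is equivalent to $\inf\{\norm{g^T M} + \norm{g^T B} : \norm{g^T} = 1\} > 0$, which is equivalent to $\det(MM^* + BB^*) > 0$; the quantitative (uniform in $p$) versions in \ref{item:EC-2} and \ref{item:EC-3} then match the requirement that the rank condition hold on the \emph{closure} $\overline{H(\mathbb{C})}$, using continuity of the maps $M \mapsto \inf\{\dots\}$ and $M \mapsto \det(MM^* + BB^*)$ together with the fact that $\overline{H(\mathbb{C})}$ is closed (and that $H(\mathbb{C})$ stays in a bounded region only after accounting for the behavior as $\Re(p) \to \pm\infty$, which is exactly why one passes to the closure rather than working with $H(\mathbb{C})$ itself). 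I would also note that, unlike the approximate case, no separate $\rank[K, B] = n$ hypothesis is needed, since $K = \lim_{\Re(p) \to +\infty} H(p) \in \overline{H(\mathbb{C})}$ (along $p$ real), so that condition is subsumed in~\ref{item:EC-1}.

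\textbf{Main obstacle.} The delicate step is the rigorous reduction to Yamamoto's realization theory — specifically, verifying that~\eqref{eq_generic_delay} fits the hypotheses of that theory in the $L^1$ setting (well-posedness of the associated input--output map on $\Sigma^1$, the $M_+(\mathbb R)$-convolution structure, and the identification of the reachable space with $\Ran E_1(T_*)$), and then extracting the exact-controllability criterion in the sharp form involving $\overline{H(\mathbb{C})}$ rather than $H(\mathbb{C})$. The appearance of the closure is the genuinely subtle feature: approximate controllability corresponds to the rank condition on $H(\mathbb{C})$ (via injectivity of the dual, a non-quantitative statement), whereas exact controllability forces a uniform bound, and the behavior of $H(p)$ as $\abs{\Re(p)} \to \infty$ must be controlled — this is where the restriction to $q = 1$ (dual space $L^\infty$) enters, since the surjectivity-versus-dense-range dichotomy and the relevant Paley--Wiener/Laplace-transform estimates behave best in that endpoint case. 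I would expect the bulk of the work to lie in this transcription, with the three-way equivalence of \ref{item:EC-1}--\ref{item:EC-3} being routine once the closure is correctly in place.
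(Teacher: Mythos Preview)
Your proposal correctly identifies the overall architecture --- reduction to the difference equation, the role of realization theory in Yamamoto's sense, Theorem~\ref{th_appr_minimal_time} to pass between free-time and time-$T_*$ controllability, and the routine nature of the equivalence \ref{item:EC-1}--\ref{item:EC-3} (which the paper does via Lemma~\ref{lem:equivs-EC}). However, there is a genuine gap at the step you flag as ``the delicate one'': you say Yamamoto's framework will deliver a Hautus criterion for exact reachability involving $\overline{H(\mathbb C)}$, but you do not identify the mechanism by which this actually happens, and that mechanism is not simply ``surjectivity forces a uniform bound''.

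The paper's proof goes through a \emph{B\'ezout identity}. It shows that $L^1$-exact controllability in time $T_*$ is equivalent to the existence of matrix-valued Radon measures $R,S$ with entries in $M(\mathbb R_-)$ such that $Q*R+P*S=\delta_0 I_n$, where $Q=\diag(\delta_{-\tau_1},\dots,\delta_{-\tau_n})-K\delta_0$ and $P=B\delta_0$. Necessity of the B\'ezout identity (controllability $\Rightarrow$ B\'ezout) uses the open mapping theorem on the input--state map $\widetilde G\colon L^1([-T_*,0],\mathbb R^m)\to \widetilde\Sigma^1$ together with weak-$*$ compactness of bounded sets of Radon measures (this is precisely where $q=1$ is used, and it is worked out in the Appendix). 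Once the B\'ezout identity is in hand, the implication B\'ezout $\Rightarrow$ Item~\ref{item:EC-2} is straightforward (take Laplace transforms). The substantial direction is the converse: Item~\ref{item:EC-2} $\Rightarrow$ B\'ezout. This is a \emph{corona problem} in the algebra $\Omega_-^T$ of finite sums of Dirac masses on $[-T,0]$, and the paper invokes a corona theorem (\cite[Theorem~4.2]{fueyo-chitour}) to solve it. That corona theorem is the crux of the sufficiency argument; your proposal does not mention it, and nothing in the abstract Yamamoto machinery produces it for free.

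A minor correction: you write $K=\lim_{\Re(p)\to+\infty}H(p)$, but $H(p)=\diag(e^{p\tau_j})-K$ blows up there. It is as $\Re(p)\to -\infty$ that $H(p)\to -K$, so $-K\in\overline{H(\mathbb C)}$, and $\rank[K,B]=\rank[-K,B]=n$ is subsumed in Item~\ref{item:EC-1} for that reason.
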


Theorem~\ref{Th_appro_con} gives necessary and sufficient conditions for the $L^q$-approximate controllability of 
linear one-dimensional hyperbolic systems
for $q\in [1,+\infty)$, while Theorem~\ref{Th_exact_con1} states necessary and sufficient conditions for the $L^1$-exact controllability. The necessary condition $\rank [K,B]=n$ stated in Theorem~\ref{Th_appro_con} for the approximate controllability is implied by each item of Theorem~\ref{Th_exact_con1}, by letting the real part of $p$ tend to $-\infty$ in \eqref{eq:Hp}.
We conjecture that the conditions given in Theorem~\ref{Th_exact_con1} are also necessary and sufficient to characterize the $L^q$-exact controllability for $q \in (1,+\infty)$.

The remaining of the paper is devoted to the proofs of Theorems~\ref{Th_appro_con} and \ref{Th_exact_con1}. The equivalence between Items~\ref{item:AC-1}--\ref{item:AC-3} in Theorem~\ref{Th_appro_con} is trivial by simply looking at the negation of each statement. Regarding now the equivalence between Items~\ref{item:EC-1}--\ref{item:EC-3} in Theorem~\ref{Th_exact_con1}, it immediately follows from the following lemma.

\begin{lemma}
\label{lem:equivs-EC}
The following assertions are equivalent:
\begin{enumerate}
\item\label{item:EC-n1} There exists $M \in \overline{H(\mathbb{C})}$ such that $\rank  \left[M,B\right] < n$;
\item\label{item:EC-n2} There exist sequences $(p_k)_{k \in \mathbb N} \in \mathbb C^{\mathbb N}$ and $(g_k)_{k \in \mathbb N} \in (\mathbb C^n)^{\mathbb N}$ such that $\norm{g_k} = 1$ for every $k \in \mathbb N$ and
\[
g_k^T H(p_k) \to 0 \text{ and } g_k^T B \to 0 \qquad \text{ as } k\to +\infty;
\]
\item\label{item:EC-n3} There exists a sequence $(p_k)_{k \in \mathbb N} \in \mathbb C^{\mathbb N}$ such that
\begin{equation*}
\det \left( H(p_k) H(p_k)^*+B  B^* \right) \to 0 \qquad \text{ as } k \to +\infty.
\end{equation*}
\end{enumerate}
\end{lemma}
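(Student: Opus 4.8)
The plan is to establish the cyclic chain of implications $\ref{item:EC-n1}\Rightarrow\ref{item:EC-n2}\Rightarrow\ref{item:EC-n3}\Rightarrow\ref{item:EC-n1}$. The first implication is immediate: given $M\in\overline{H(\mathbb{C})}$ with $\rank[M,B]<n$, I would pick a sequence $(p_k)_{k\in\mathbb{N}}$ with $H(p_k)\to M$ and a nonzero $g\in\mathbb{C}^n$ with $g^T[M,B]=0$ (such a $g$ exists since $\rank[M,B]<n$), and normalize $\|g\|=1$; then the constant sequence $g_k:=g$ satisfies $g_k^TH(p_k)\to g^TM=0$ and $g_k^TB=0$, which is $\ref{item:EC-n2}$.

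For $\ref{item:EC-n2}\Rightarrow\ref{item:EC-n3}$, the first step is to show that $\ref{item:EC-n2}$ forces $\Re(p_k)$ to stay bounded from above. Indeed, writing $\tau_{\min}=\min_{1\le j\le n}\tau_j$ and using $\|g_k\|=1$, one has $\|g_k^T\diag(e^{p_k\tau_1},\dots,e^{p_k\tau_n})\|\ge e^{\tau_{\min}\Re(p_k)}$ as soon as $\Re(p_k)\ge 0$, hence $\|g_k^TH(p_k)\|\ge e^{\tau_{\min}\Re(p_k)}-\|K\|$, which contradicts $g_k^TH(p_k)\to 0$ if $\Re(p_k)\to+\infty$ along a subsequence. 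Since $\Re(p_k)$ is bounded above, $|e^{p_k\tau_j}|=e^{\tau_j\Re(p_k)}$ is bounded and so is the sequence $(H(p_k))_{k\in\mathbb{N}}$; consequently the Hermitian positive semidefinite matrices $H(p_k)H(p_k)^*+BB^*$ have operator norm bounded by some constant $C$. I would then set $h_k:=\overline{g_k}$, a unit vector with $h_k^*H(p_k)=g_k^TH(p_k)\to 0$ and $h_k^*B=g_k^TB\to 0$, and observe that the Rayleigh quotient $h_k^*\bigl(H(p_k)H(p_k)^*+BB^*\bigr)h_k=\|h_k^*H(p_k)\|^2+\|h_k^*B\|^2$ tends to $0$; thus the smallest eigenvalue of $H(p_k)H(p_k)^*+BB^*$ tends to $0$, and since the other $n-1$ eigenvalues lie in $[0,C]$, their product — the determinant — tends to $0$, which is $\ref{item:EC-n3}$.

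For $\ref{item:EC-n3}\Rightarrow\ref{item:EC-n1}$, I would again begin by ruling out $\Re(p_k)\to+\infty$. Since $BB^*$ is positive semidefinite, monotonicity of the determinant on positive semidefinite matrices gives $\det\bigl(H(p_k)H(p_k)^*+BB^*\bigr)\ge\det\bigl(H(p_k)H(p_k)^*\bigr)=|\det H(p_k)|^2$, and the factorization $\det H(p)=e^{p(\tau_1+\cdots+\tau_n)}\det\bigl(I_n-\diag(e^{-p\tau_1},\dots,e^{-p\tau_n})K\bigr)$ shows that $|\det H(p_k)|\to+\infty$ whenever $\Re(p_k)\to+\infty$ (the second factor tends to $1$), contradicting $\ref{item:EC-n3}$. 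Hence $(H(p_k))_{k\in\mathbb{N}}$ is bounded, and after extracting a subsequence with $H(p_k)\to M$ we obtain $M\in\overline{H(\mathbb{C})}$ with $\det(MM^*+BB^*)=0$. Picking a nonzero $v$ in the kernel of $MM^*+BB^*$, the identity $0=v^*(MM^*+BB^*)v=\|M^*v\|^2+\|B^*v\|^2$ forces $M^*v=0$ and $B^*v=0$, so $g:=\overline{v}$ is a nonzero vector with $g^T[M,B]=0$, i.e.\ $\rank[M,B]<n$, which is $\ref{item:EC-n1}$.

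The hard part will be the two ``no escape towards $\Re(p)=+\infty$'' arguments. In $\ref{item:EC-n2}\Rightarrow\ref{item:EC-n3}$ this is just a one-line Euclidean-norm lower bound, but in $\ref{item:EC-n3}\Rightarrow\ref{item:EC-n1}$ it rests on the asymptotics of $\det H(p)$ as $\Re(p)\to+\infty$, for which the factorization above (equivalently, the multilinear-polynomial structure of $\det(\diag(e^{p\tau_1},\dots,e^{p\tau_n})-K)$ in the variables $e^{p\tau_j}$, whose top-degree term is $e^{p(\tau_1+\cdots+\tau_n)}$) is the key point. Everything else is routine compactness and linear algebra, the only subtlety being the bookkeeping between the $g^T$-pairings appearing in $\ref{item:EC-n2}$ and the $g^*$-pairings appearing in the eigenvector argument, which is precisely why the conjugates $\overline{g_k}$ and $\overline{v}$ enter.
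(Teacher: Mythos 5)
Your proof is correct, and it reorganizes the argument relative to the paper. The paper treats Item~1 as the hub: it proves 1$\Rightarrow$2 and 1$\Rightarrow$3 directly (constant sequence $g_k\equiv g$, continuity of the determinant), and then handles 2$\Rightarrow$1 and 3$\Rightarrow$1 simultaneously by one compactness argument: rule out $\Re(p_k)\to+\infty$ using the factorization $H(p_k)=\diag(e^{p_k\tau_1},\dots,e^{p_k\tau_n})\bigl(I_n-\diag(e^{-p_k\tau_1},\dots,e^{-p_k\tau_n})K\bigr)$ together with the asymptotics $\det\bigl(H(p_k)H(p_k)^*+BB^*\bigr)=(1+o(1))\prod_{j=1}^n e^{2\Re(p_k)\tau_j}$, extract $H(p_k)\to M$, and in the case of Item~2 also extract a convergent subsequence of $(g_k)$ to get $g^T[M,B]=0$. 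You instead close the cycle 1$\Rightarrow$2$\Rightarrow$3$\Rightarrow$1: your 2$\Rightarrow$3 step (norm lower bound $e^{\tau_{\min}\Re(p_k)}-\norm{K}$ to bound $\Re(p_k)$, then the Rayleigh quotient of $\overline{g_k}$ and the eigenvalue bound $\det\le\lambda_{\min}C^{n-1}$) has no counterpart in the paper, and in 3$\Rightarrow$1 you replace the paper's $(1+o(1))$ expansion by the monotonicity bound $\det\bigl(H H^*+BB^*\bigr)\ge\abs{\det H}^2$ plus the same factorization of $\det H(p)$. The key ingredients — the ``no escape to $\Re(p)=+\infty$'' estimate, compactness of $\bigl(H(p_k)\bigr)_k$, and the positive-semidefinite linear algebra identifying $\det(MM^*+BB^*)=0$ with a common left kernel of $M$ and $B$ — are shared; what your route buys is that it never needs a convergent subsequence of $(g_k)$, at the modest price of invoking the determinant monotonicity and eigenvalue-product facts, while the paper's hub structure dispatches both converse implications with a single extraction argument.
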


\begin{proof}
Assume that Item~\ref{item:EC-n1} holds true, i.e., there exist
$M\in \overline{H(\mathbb{C})}$ and $g\in \mathbb C^n$ of norm one so that $g^T M = 0$, and $g^T B = 0$. Let $(p_k)_{k \in \mathbb N}$ be such that $H(p_k) \to M$ as $k \to +\infty$. Item~\ref{item:EC-n2} follows  by taking $g_k\equiv g$, while Item~\ref{item:EC-n3} follows  from the fact that $\det(M M^\ast + B B^\ast) = 0$ and the continuity of the determinant function.

Assuming now either Item~\ref{item:EC-n2} or Item~\ref{item:EC-n3}, let us deduce Item~\ref{item:EC-n1}.
We first show that the real part of the sequences $(p_k)_{k \in \mathbb N}$ from Items~\ref{item:EC-n2} and \ref{item:EC-n3} must have real part  bounded from above. Reasoning by contradiction we have that, up to extracting a subsequence, $\Re(p_k)$ tends to $+\infty$ as $k$ tends to $+\infty$. Noticing that $H(p_k) = \diag(e^{p_k \tau_1}, \dotsc, e^{p_k \tau_n}) (I_n - \diag(e^{-p_k \tau_1}, \dotsc, e^{-p_k \tau_n}) K)$, it follows that $\norm{g_k^T H(p_k)} \to +\infty$ 
and $\det\left( H(p_k) H(p_k)^*+B  B^* \right)=(1+o(1))\prod_{j=1}^n e^{2 \Re(p_k)\tau_j} \to +\infty$
as $k \to +\infty$. 
Hence, we deduce that there exists $M \in \overline{H(\mathbb{C})}$ such that, up to extracting a subsequence,  $H(p_k) \to M$ as $k \to +\infty$, and in this case Item~\ref{item:EC-n2} would imply, up to extracting a converging subsequence of $(g_k)_{k \in \mathbb N}$, the existence of $g \in \mathbb C^n$ with $\norm{g} = 1$ and $g^T [M, B] = 0$, while Item~\ref{item:EC-n3} would imply that $\det(M M^* + B B^*) = 0$, both yielding Item~\ref{item:EC-n1}.
\end{proof}

It remains to prove that $L^q$-approximate (respectively, $L^1$-exact) controllability is equivalent to $\rank [K, B] = n$ and one of Items~\ref{item:AC-1}--\ref{item:AC-3} of Theorem~\ref{Th_appro_con} (respectively, Items~\ref{item:AC-1}--\ref{item:AC-3} of Theorem~\ref{Th_exact_con1}). To get such a result, our approach is based on realization theory.

\subsection{Realization theory approach}

We now explain how to reformulate controllability problems for \eqref{eq_generic_delay} within the framework of the realization theory developed by Y.~Yamamoto in \cite{yamamoto1989reachability, Yutaka_Yamamoto}, cf.\ also \cite[Section 5]{ChitourHautus}. In such a framework, a control system is defined through an input--output relation (with zero initial condition), where the input is a function whose support is compact and included in $\mathbb R_-$ and the output is observed for all nonnegative times. 
In order to apply such a framework to our controllability problem, we should fulfill the following conditions:
\begin{enumerate}
 \item  the input $u$ is taken in the space  $\Omega^q$ of all functions of $L^q (\mathbb{R},\mathbb{R}^k)$ whose support is compact and contained in the interval $(-\infty,0]$;
  \item The initial state is equal to zero, i.e., for 
 $t$ smaller than the infimum of the support of $u$,
 we have $y_j(t)=0$ for all $j \in \llbracket 1,n\rrbracket$, and  $y$ satisfies
\begin{equation}
\label{eq:input-state}
\begin{pmatrix}
y_1(t) \\
\vdots\\
y_n(t)
\end{pmatrix}=K \begin{pmatrix}
y_1(t-\tau_1) \\
\vdots\\
y_n(t-\tau_n)
\end{pmatrix}+Bu(t), \qquad t \in \mathbb{R};
\end{equation}

 \item \label{item:Q} The output $z\colon\mathbb{R}_+\to\mathbb{R}^n$ is computed from the trajectory $y$ of System~\eqref{eq:input-state}
 and must belong to 
 \begin{equation}
 \label{def_XQ}
 X^{Q,q}:=\left\{z \in L^q_{\mathrm{loc}}\left(\mathbb{R}_+,\mathbb{R}^n\right)|\, \pi(Q*z)=0  \right\},
 \end{equation}
 where $\pi$ is the operator of truncation on  $\mathbb{R}_+$ and $Q$ is a convolution kernel defined so that the relation $\pi(Q*z)=0$ expresses the dynamics of System~\eqref{eq:input-state} for $t\geq 0$.
\end{enumerate} 
We next aim at characterizing the output $z$ and the convolution kernel $Q$ for trajectories $y$ of System~\eqref{eq:input-state} 
associated with controls $u\in \Omega^q$. Note that our controllability issues amount to control $y_{[T]}\in \Sigma^q$, $T \geq 0$, and for $t\geq 0$, $y$ is a trajectory of System~\eqref{eq_generic_delay} if and only if $\pi(\tilde{Q}*y)=0$, where $\tilde{Q}$ is given by
\[
\tilde{Q}=I_n\delta_0-\sum_{j=1}^n Ke_je_j^T\delta_{\tau_j}.
\]
To fit Item~\ref{item:Q}, the output $z$ should represent $y_{[T]}$, $T\ge 0$, while being defined for $t\geq 0$. We then define it as 
\begin{equation}\label{eq:output-z}
z(t)=
\begin{pmatrix}
 z_1(t) \\
 \vdots\\
 z_n(t)
 \end{pmatrix}= \begin{pmatrix}
 y_1(t-\tau_1) \\
 \vdots\\
 y_n(t-\tau_n)
 \end{pmatrix},\qquad t \in [0,+\infty),
\end{equation}
and the relation $\pi(\tilde{Q}*y)=0$  becomes $\pi(Q*z)=0$, where the convolution kernel $Q$ is  given by
\begin{equation}
\label{def:Q}
Q=\diag(\delta_{-\tau_1},\dots,\delta_{-\tau_n})-K\delta_0.
\end{equation}
With $Q$ defined as above, the set $X^{Q,q}$ from \eqref{def_XQ} can be easily identified with the space $\Sigma^q$. Indeed, one first sees that 
$X^{Q,q}$  is identified with $\prod_{j=1}^n L^q ([0, \tau_j], \mathbb R)$ since $z \in X^{Q,q}$ if and only if, for each $j=1,\dots,n$, the restriction $z_j|_{[0,\tau_j]}$ is in $L^q([0,\tau_j],\mathbb{R})$  and $z$ is the unique extension of $(z_1|_{[0,\tau_1]},\dots,z_n|_{[0,\tau_n]})^T$ on the half-line $[0,+\infty)$ satisfying the condition $\pi(Q*z)=0$. In a second step, one identifies $\prod_{j=1}^n L^q ([0, \tau_j], \mathbb R)$ with $\Sigma^q$ by a translation of each component.

With the above definitions, the input-output system described by \eqref{eq:input-state} and \eqref{eq:output-z} can be written as
\[
Q * z = P * u\qquad\mbox{on}\quad \mathbb{R},
\]
where $P = B \delta_0$ and $z$ is extended on $\mathbb{R}_-$ 
by computing it from $y$ according to \eqref{eq:output-z}. 

We can reformulate the controllability of System~\eqref{eq_generic_delay} 
in terms of the input-output delay system \eqref{eq:input-state}--\eqref{eq:output-z} as follows.

\begin{lemma} Let $q \in [1,+\infty)$. System~\eqref{eq_generic_delay} is
\begin{enumerate}[1)]
\item {$L^q$-approximately controllable from the origin} if and only if for each $\phi \in X^{Q,q}$ there exists a sequence of inputs $(u_n)_{n \in \mathbb{N}}$  in $\Omega^q$ 
whose associated sequence of outputs 
$(z_n)_{n\in\mathbb{N}}$
satisfies
\[
z_n \underset{n \to +\infty}{\longrightarrow} \phi \qquad \text{in } L^q_{\rm loc}\left(\mathbb{R}_+,\mathbb{R}^n\right);
\]
\item {$L^q$-exactly controllable from the origin} if and only if for each $\phi \in X^{Q,q}$ there exists $u \in \Omega^q$ such that $z = \phi$.
\end{enumerate}
\end{lemma}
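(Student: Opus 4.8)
The statement asserts that the two notions of controllability from the origin for the difference equation \eqref{eq_generic_delay} coincide with the corresponding approximation/equality properties of the input-output map $u \mapsto z$ in the Yamamoto framework. The plan is to trace through the identifications just set up in the text and match them term by term. First I would recall that, by Proposition~\ref{prop_var_const_formula}, for a control $u$ supported in a compact subset of $\mathbb R_-$ the state $y_{[T]}$ at any time $T\ge 0$ of the trajectory starting from $0$ is exactly $E_q(T)u(\cdot - T)$ (shifting time so that the control acts on $[0,T]$); and by Remark~\ref{remk:RanETnondecre}, $\Ran E_q(T)$ is non-decreasing in $T$, so that $L^q$-approximate (resp.\ exact) controllability from the origin is equivalent to $\overline{\bigcup_{T\ge 0}\Ran E_q(T)}=\Sigma^q$ (resp.\ $\bigcup_{T\ge 0}\Ran E_q(T)=\Sigma^q$). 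This reduces the lemma to showing that the family of attainable states $\{y_{[T]} : T\ge 0,\ u\in\Omega^q\}$ corresponds, under the identification of $\Sigma^q$ with $X^{Q,q}$, precisely to the family of outputs $\{z : u\in\Omega^q\}$.

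Second, I would make the correspondence between $y_{[T]}$ and the output $z$ explicit. Given $u\in\Omega^q$, say $\operatorname{supp} u\subset(-\infty,0]$, the trajectory $y$ of \eqref{eq:input-state} vanishes for $t$ below the infimum of $\operatorname{supp} u$, and the output is $z_j(t)=y_j(t-\tau_j)$ for $t\ge 0$. For $T\ge 0$ fixed and $s\in[-\tau_j,0)$, the component $z_j(T+\tau_j+s)=y_j(T+s)=y_{[T],j}(s)$; so the restriction of $z$ to the window $[0,\tau_j]$ translated by $T$ reads off the state $y_{[T]}$. In particular $z|_{[0,\tau_j]}$ (i.e.\ the "$T=0$'' window, after the translation identifying $\prod_j L^q([0,\tau_j],\mathbb R)$ with $\Sigma^q$ described in the text) is the attainable state $y_{[0]}$ reached by the control $u$ viewed as acting on $[\text{its support},0]$; and conversely the state $y_{[T]}$ reached at time $T$ by a control $v\in L^q([0,T],\mathbb R^m)$ is the $T=0$ window of the output generated by the input $u(t)=v(t+T)\in\Omega^q$. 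Thus $\{z|_{\text{window}} : u\in\Omega^q\}=\bigcup_{T\ge 0}\Ran E_q(T)$ as subsets of $\Sigma^q\cong X^{Q,q}$, which is exactly what is needed.

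Third, for the approximate statement I would note that convergence $z_n\to\phi$ in $L^q_{\mathrm{loc}}(\mathbb R_+,\mathbb R^n)$ is, since $\phi\in X^{Q,q}$ is determined by its finitely-many windows $[0,\tau_j]$ and the relation $\pi(Q*z)=0$ propagates these deterministically (and boundedly) to all of $\mathbb R_+$, equivalent to convergence of the windowed restrictions in $\Sigma^q$; the forward implication is the inclusion $L^q_{\mathrm{loc}}$-convergence $\Rightarrow$ convergence on $[0,\max_j\tau_j]$, and the reverse is the boundedness of the extension operator $\prod_j L^q([0,\tau_j])\to L^q_{\mathrm{loc}}(\mathbb R_+)$ defined by $\pi(Q*z)=0$, both already granted by the identification $X^{Q,q}\cong\Sigma^q$ recorded in the text. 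So $L^q$-approximate controllability from the origin $\iff$ every $\phi\in X^{Q,q}$ is an $L^q_{\mathrm{loc}}$-limit of outputs. The exact case is the same argument with closures removed: $\phi$ is attained by some $z$ iff the corresponding window lies in $\bigcup_T\Ran E_q(T)$.

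The main obstacle, and the one point deserving care, is the bookkeeping around the identification of $X^{Q,q}$ with $\Sigma^q$: one must check that the translation-by-$T$ used to pass from the window $\prod_j L^q([0,\tau_j],\mathbb R)$ to $\Sigma^q=\prod_j L^q([-\tau_j,0],\mathbb R)$ is compatible with the shift linking a control on $[0,T]$ to its extension in $\Omega^q$, and that the non-decreasing union $\bigcup_{T\ge0}\Ran E_q(T)$ is genuinely what the set of outputs realizes (no output can "see'' a state outside this union, because $z$ on $\mathbb R_-$ is reconstructed from $y$ via \eqref{eq:output-z} and $y$ is causally determined by $u$). Everything else is the routine unwinding of \eqref{eq:variation_constant}, \eqref{eq:output-z}, and the definitions in \eqref{def_XQ}--\eqref{def:Q}; I would present it as two short paragraphs, one per item, after stating the window-correspondence as a preliminary observation.
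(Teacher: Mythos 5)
Your proposal is correct and follows exactly the route the paper intends: the lemma is stated there without a separate proof, being the direct unwinding of the identification of $X^{Q,q}$ with $\Sigma^q$ via the windows $[0,\tau_j]$, the time-shift correspondence between controls on $[0,T]$ and inputs in $\Omega^q$, and the fact that the attainable outputs realize $\bigcup_{T\ge 0}\Ran E_q(T)$, with $L^q_{\rm loc}$-convergence of outputs equivalent to $\Sigma^q$-convergence of windows through the bounded propagation given by $\pi(Q*z)=0$. Your write-up supplies precisely these details and contains no gaps.
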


Note that, in the above formulation of the controllability problem for System~\eqref{eq_generic_delay} in terms of 
realization theory, the main differences with respect to \cite{ChitourHautus} lie in the state space (here equal to $\Sigma^q = \prod_{i=1}^n L^q([-\tau_i,0],\mathbb{R})$ instead of $L^q([-\tau_{\max},0],\mathbb{R}^n)$ in \cite{ChitourHautus}), and the definition of the distribution $Q$ in \eqref{def:Q} (which was equal to $\delta_{-\Lambda_N} I_d- \sum_{j=1}^N \delta_{-\Lambda_N+\Lambda_j} A_j$ in \cite{ChitourHautus}). Moreover, notice that here $H = \widehat{Q}$, 
leading to
some simplification with respect to \cite{ChitourHautus}.

We are now in position to complete the proof of Theorems~\ref{Th_appro_con} and \ref{Th_exact_con1}.

\begin{proof}[Proof of Theorem~\ref{Th_appro_con}]
Recall that, thanks to Theorem~\ref{th_appr_minimal_time}, $L^q$-approximate controllability in time $\tau_1 + \dotsb + \tau_n$ is equivalent to $L^q$-approximate controllability from the origin. The latter, in turn, is equivalent to $L^2$-approximate controllability from the origin, as follows by continuity of the input-output maps $\Omega^q\ni u\mapsto z\in X^{Q, q}$, $q\in [1,+\infty)$ (see the proof of \cite[Theorem~5.7]{ChitourHautus} for details). To conclude, it suffices to apply the characterization of $L^2$-approximate controllability from the origin from \cite[Corollary~4.10]{yamamoto1989reachability} (where the latter property is referred to as \emph{quasi-reachability}).
\end{proof}

\begin{proof}[Proof of Theorem~\ref{Th_exact_con1}]
We first claim that $L^1$-exact controllability of \eqref{eq:hyperbolic} in time $\tau_1 + \dotsb + \tau_n$ is equivalent to the existence of two matrix-valued Radon measures $R,S$ with entries in $M(\mathbb{R}_-)$ such that
\begin{equation}\label{eq:bezout}
Q\ast R+P\ast S=\delta_0 I_n.
\end{equation}
Equation~\eqref{eq:bezout} is referred to as a \emph{B\'ezout identity}. Indeed, the necessity of \eqref{eq:bezout} follows from the same ideas as those used in the proof of \cite[Theorem~5.13]{ChitourHautus} but, due to some nontrivial notational adaptations to our setting, we provide a complete argument in the Appendix. The converse implication is obtained by proceeding exactly as in the arguments of \cite[Proposition~5.10 and Corollary~5.12]{ChitourHautus}.

If \eqref{eq:bezout} is satisfied for some matrix-valued Radon measures $R,S$ with entries in $M(\mathbb{R}_-)$, then, by proceeding as in the proof of \cite[Proposition~5.17]{ChitourHautus}, we deduce that Item~\ref{item:EC-2} holds true. The latter is equivalent to
Items~\ref{item:EC-1} and \ref{item:EC-3} by Lemma~\ref{lem:equivs-EC}.

To prove the converse, let us first introduce, for $T > 0$, the subspace $\Omega_{-}^{T}$ of $M(\mathbb{R}_-)$ made of the elements $h\in M(\mathbb{R}_-)$ of the form
\begin{equation}
\label{eq:omega}
h = \sum_{j=0}^{N} h_j \delta_{-\lambda_j} \quad \text{for some } N \in \mathbb N \text{ and } \lambda_j\in [0,T],\,  h_j \in \mathbb{R} \text{ for } j \in \llbracket 0, N\rrbracket.
\end{equation}
We want to prove that Item~\ref{item:EC-2} of the statement implies the existence of matrix-valued Radon measures $R,S$ with entries in $M(\mathbb{R}_-)$ such that \eqref{eq:bezout} holds true. By proceeding as in \cite{Fuhrmann_corona} (see also the discussion before Conjecture~5.18 in \cite{ChitourHautus}), it is enough to prove that the following \emph{corona problem} admits a solution: given $k \in \mathbb N^\ast$, $T > 0$, $\alpha > 0$, and $f_1, \dotsc, f_k$ in $\Omega_-^T$ satisfying
\[
\sum_{i=1}^k \abs*{\widehat f_i(p)} \geq \alpha \qquad \text{for all } p \in \mathbb C,
\]
find $g_1, \dotsc, g_k$ in $M(\mathbb R_-)$ such that
\[
\sum_{i=1}^k f_i * g_i = \delta_0.
\]
It is proved in \cite[Theorem~4.2]{fueyo-chitour} that this corona problem admits a solution, yielding the conclusion.
\end{proof}

\section{Application to flows in networks}
\label{sec:appli} 

In this section, we apply the controllability results from Section~\ref{sec:control_delay_system} to systems of transport equations describing flows in networks (see, e.g., \cite{KramarSikolya2005, Gantouh2020ApproximateCO}).

\subsection{Statement of the problem}

Consider a 
directed graph $\mathcal{G} = (\mathcal{V}, \mathcal{E})$, where $\mathcal V = \{v_1,\dotsc,v_k\}$ and $\mathcal E = \{\varepsilon_1,\dotsc,\varepsilon_n\}
$ denote, respectively, the sets of vertices and edges of $\mathcal G$. Given an edge $\varepsilon_j \in \mathcal E$, we denote its endpoints by $\varepsilon_j(0)$ and $\varepsilon_j(1)$, and we assume that the edge is oriented from $\varepsilon_j(1)$ to $\varepsilon_j(0)$. We say that $\varepsilon_j$ is an \emph{outgoing} edge of the vertex $\varepsilon_j(1)$ and an \emph{incoming} edge of the vertex $\varepsilon_j(0)$.

The 
\emph{outgoing} 
and \emph{incoming incidence matrices} of the graph $\mathcal G$
are the $k\times m$ matrices $\mathcal{I}^-
$ and $\mathcal{I}^+
$, 
whose coefficients are, respectively,
\begin{equation*}
i^{-}_{ij}=
 \begin{cases}
1, & \mbox{ if $v_i=\varepsilon_j(1)$},\\
0, & \mbox{ otherwise},
\end{cases}\qquad i^{+}_{ij}=
 \begin{cases}
1, & \mbox{ if $v_i=\varepsilon_j(0)$},\\
0, & \mbox{ otherwise},
\end{cases}\qquad i \in \llbracket 1, k\rrbracket,\ j \in \llbracket 1, n\rrbracket.
\end{equation*} 
Note that each column of $\mathcal I^-$ and of $\mathcal I^+$ has exactly one nonzero element.

With each $(i, j) \in \llbracket 1, k\rrbracket \times \llbracket 1, n\rrbracket$ we associate a weight $w_{i, j}^- \in [0, 1]$, with the assumption that \begin{equation}
    \label{eq:kirchoff}
\sum_{j=1}^{n} w_{i j}^- = 1,\qquad i \in \llbracket 1, k\rrbracket,
\end{equation} 
and that $w_{ij}^-=0$ if and only if $i_{ij}^-=0$. We define the \emph{weighted outgoing incidence matrix} $\mathcal{I}^-_{w}=(w_{ij}^- )_{i\in\llbracket 1,k\rrbracket,j\in \llbracket1,n\rrbracket}$.

We now define a controlled flow on the graph $\mathcal G$ following the approach of \cite{KramarSikolya2005, Gantouh2020ApproximateCO}. Each edge $\varepsilon_j$ is identified with a real interval which, up to a normalization, can be assumed to be the interval $[0, 1]$, with the values $0$ and $1$ corresponding to the endpoints $\varepsilon_j(0)$ and $\varepsilon_j(1)$ of $\varepsilon_j$, respectively. On each such edge, there is a flow 
from the endpoint $1$ to the endpoint $0$ of the corresponding interval, and such a flow is described by a transport equation. At each vertex $v_i \in \mathcal V$ of the graph, we assume that the total incoming flow is distributed to the outgoing edges $\varepsilon_j$ according to the weights $w_{ij}^-$. In addition, we also have $m$ scalar controls $u_1, \dotsc, u_m$ acting on the vertices of the graph. The action of the control $u_l$ at the vertex $v_i$ is 
leveraged by a real coefficient $\gamma_{il}$, and the total control acting on a vertex $i$ is distributed to the outgoing edges $\varepsilon_j$ according to the weights $w_{ij}^-$.
Condition~\eqref{eq:kirchoff}  can then be interpreted as the conservation of the mass at the vertices.  This interpretation motivates  the following assumption on the graph $\mathcal G$, which is used in \cite{KramarSikolya2005} and which is implied by the strong connectivity assumption of \cite{Gantouh2020ApproximateCO}. 
\begin{enumerate}[label={(\Alph*)}, start=11]
\item\label{hypo:graph}
For every $v_i \in \mathcal V$, there exist $\varepsilon_{j}, \varepsilon_{j'} \in \mathcal E$ such that $\varepsilon_j(0) = v_i$ and $\varepsilon_{j'}(1) = v_i$.
\end{enumerate}
In other words, Assumption~\ref{hypo:graph} states that every vertex has at least one outgoing  and one incoming edge. In particular,  $k \le n$.
We shall consider that Assumption~\ref{hypo:graph} holds true in the sequel of the section.

The above description of the flow in the graph $\mathcal G$ is represented mathematically by the system
 \begin{equation}\label{eq_gantouh}
\begin{dcases} 
\partial_t z_j(t,x) +
\lambda_j(x) \partial_x z_j(t,x)+d_j(x) z_j(t,x) = 0,\qquad\quad & t >0,\ x \in (0, 1),\\
i^-_{ij} z_{j}(t,1)=w_{ij}^-\sum_{e=1}^n i^+_{ie} z_e(t,0)+ w_{ij}^-\sum_{l=1}^{m} \gamma_{il} u_l(t),&i\in \llbracket1,k\rrbracket,\ j\in \llbracket1,n\rrbracket,
\end{dcases}
\end{equation}
where $d_1,\dotsc,d_n$ belong to $L^1([0, 1], \mathbb R)$ and $\lambda_1, \dotsc, \lambda_n, \frac{1}{\lambda_1}, \dotsc, \frac{1}{\lambda_n}$ belong to $L^\infty([0, 1], \mathbb R_{-}
)$. 
Even though the second line of \eqref{eq_gantouh} formulates $kn$ equations, it provides only $n$ nontrivial conditions, namely the ones corresponding to pairs $(i, j) \in \llbracket 1, k\rrbracket \times \llbracket 1, n\rrbracket$ with $\varepsilon_j(1) = v_i$.

Let us now write System~\eqref{eq_gantouh} under the form \eqref{eq:hyperbolic}. For $x \in [0,1]$, let 
 \[\Lambda(x)= \diag(\lambda_1(x),\dotsc,\lambda_n(x)),\quad D(x)= \diag(d_1(x),\dots,d_n(x)),\quad \Gamma=(\gamma_{il})_{i\in \llbracket1,k\rrbracket,\;l\in\llbracket1,m\rrbracket}.\]
Setting $z(t,x)=(z_1(t,x),\dots,z_n(t,x))^T$, we can rewrite System~\eqref{eq_gantouh} as
\begin{equation}\label{eq_hypgantouh1}
\begin{cases}
\partial_t z(t,x) + \Lambda(x) \partial_x z(t,x) + D(x) z(t,x) = 0, \\
z(t,1) = \left( \mathcal{I}_w^- \right)^T \mathcal{I}^+ z(t,0) + \left(\mathcal{I}_w^-\right)^T\Gamma u(t),
 \end{cases}
 \qquad t >0,\ x \in (0, 1).
\end{equation}
According to the results from Section~\ref{sec:transformation}, System~\eqref{eq_hypgantouh1} can be transformed into the linear difference equation \eqref{eq_generic_delay} with
\begin{equation}
    \label{eq:KandB}
    K=\left( \mathcal{I}_w^- \right)^T \mathcal{I}^+ Z, \qquad B = \left(\mathcal{I}_w^-\right)^T\Gamma, \qquad \tau_j = \int_0^1 \frac{dx}{|\lambda_j(x)|},\; j \in \llbracket 1, n\rrbracket,
\end{equation}
where
\[Z=\diag(e^{-\zeta_1},\dots,e^{-\zeta_n})\qquad \mbox{and}\qquad \zeta_j= \int_0^1 \frac{d_j(x)}{|\lambda_j(x)|} dx\quad  \mbox{for }j\in \llbracket 1,n\rrbracket.\]

\begin{remark}
In \cite{Gantouh2020ApproximateCO}, the authors considered \eqref{eq_gantouh} in the case $\tau_1 = \dotsb = \tau_n$ and $D\equiv 0$, stating a Kalman-type criterion for the $L^2$-approximate controllability. Thanks to Remark~\ref{rem:commensurable}, in this case, we obtain at once that $L^q$-approximate and exact controllability are equivalent (and independent of $q\in [1,\infty]
$) and that they can be characterized by the Kalman criterion on the matrices $K$ and $B$ from \eqref{eq:KandB}. In addition, we will also show (see Proposition~\ref{prop:circle} below) that a necessary condition for controllability is that $\mathcal G$ is a finite union of directed cycle graphs, in which case $K$ and $B$ are given by \eqref{eq:Kinblocs} below.
\end{remark}

\subsection{A topological necessary condition for controllability}

We next provide our first controllability result, which introduces a strong restriction on the topology of the graph for  \eqref{eq_gantouh} to be controllable.
\begin{proposition}\label{prop:circle}
Assume that System~\eqref{eq_gantouh} is $L^q$-approximate controllable for some $q\in[1,\infty)$. Then the directed graph $\mathcal{G}$ must be the finite union of 
directed cycle graphs.
\end{proposition}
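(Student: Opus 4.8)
The plan is to exploit the necessary condition $\rank[K,B]=n$ from Theorem~\ref{Th_appro_con}, which holds whenever \eqref{eq_gantouh} is $L^q$-approximately controllable, and to translate it into a purely combinatorial statement about $\mathcal G$. First I would recall the structure of $K$ and $B$ from \eqref{eq:KandB}: since $Z$ is an invertible diagonal matrix, $\rank[K,B]=\rank\left[(\mathcal I_w^-)^T\mathcal I^+,(\mathcal I_w^-)^T\Gamma\right]=\rank\left((\mathcal I_w^-)^T\left[\mathcal I^+,\Gamma\right]\right)\le \rank (\mathcal I_w^-)^T=\rank \mathcal I_w^-$. Hence a necessary condition for controllability is $\rank \mathcal I_w^-=n$. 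Since $\mathcal I_w^-$ is a $k\times n$ matrix and, by Assumption~\ref{hypo:graph}, $k\le n$, the rank of $\mathcal I_w^-$ is at most $k\le n$, so in fact we must have $k=n$ and $\mathcal I_w^-$ of full rank $n$, i.e.\ $\mathcal I_w^-$ is an invertible $n\times n$ matrix.

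Next I would decode what it means for $\mathcal I_w^-$ (equivalently the $0$--$1$ matrix $\mathcal I^-$, since $w_{ij}^->0$ exactly when $i_{ij}^-=1$) to be a square invertible matrix. Each column of $\mathcal I^-$ has exactly one nonzero entry (edge $\varepsilon_j$ has a unique tail $\varepsilon_j(1)$), so $\mathcal I^-$ has exactly $n$ nonzero entries. For a square $0$--$1$ matrix with exactly one nonzero entry per column to be invertible, it must also have exactly one nonzero entry per row; indeed, if some row were entirely zero the matrix would be singular, and with $n$ columns each contributing one entry and $n$ rows each needing at least one entry, each row has exactly one. Translating back: every vertex $v_i$ is the tail $\varepsilon_j(1)$ of exactly one edge $\varepsilon_j$. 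Combined with Assumption~\ref{hypo:graph}, which guarantees every vertex is also the head of at least one edge, and the counting identity $\sum_i \#\{j: \varepsilon_j(0)=v_i\}=n=k$ forcing every vertex to be the head of exactly one edge as well, we conclude that in $\mathcal G$ every vertex has out-degree exactly $1$ and in-degree exactly $1$.

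A finite directed graph in which every vertex has in-degree and out-degree equal to $1$ is precisely a disjoint union of directed cycles: starting from any vertex and following the unique outgoing edge repeatedly, the walk must eventually revisit a vertex, and because in-degrees are $1$ the first repeated vertex is the starting one, closing a cycle; removing this cycle leaves a smaller graph with the same property, so induction on $k$ finishes the argument. This yields the claimed conclusion that $\mathcal G$ is a finite union of directed cycle graphs. I expect no serious obstacle here: the only point requiring a little care is the bookkeeping that links the algebraic condition $\rank[K,B]=n$ to the degree conditions, in particular using both the column structure of $\mathcal I^-$ and the mild hypothesis~\ref{hypo:graph} to upgrade ``at most one tail per column and full rank'' into ``exactly one incoming and one outgoing edge per vertex''. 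Once that is in place, the decomposition into cycles is a standard graph-theoretic fact.
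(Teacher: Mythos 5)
Your proof is correct, and it takes a somewhat different route from the paper's, although both start from the necessary condition $\rank[K,B]=n$ supplied by Theorem~\ref{Th_appro_con}. The paper works with the columns of $K$ itself: Lemma~\ref{lem:lin-comb} shows the columns of $B$ lie in the span of those of $K$ (so the rank condition forces $\rank K=n$), Lemma~\ref{lem:K-prop-cols} shows that a vertex with two incoming edges produces two proportional columns of $K$, and the conclusion follows by proving that the edge-successor map $\Omega$ (well defined once each vertex has exactly one incoming edge) is surjective, hence a permutation of the finite edge set, whose cycles give the directed cycle graphs. You instead factor $[K,B]=(\mathcal I_w^-)^T[\mathcal I^+ Z,\Gamma]$, bound the rank by $\rank \mathcal I_w^-$, and deduce from $k\le n$ (Assumption~\ref{hypo:graph}) that $k=n$ and $\mathcal I_w^-$ is invertible; the one-nonzero-entry-per-column structure then forces out-degree exactly $1$ at every vertex, Assumption~\ref{hypo:graph} together with the counting $\sum_i \#\{j:\varepsilon_j(0)=v_i\}=n=k$ forces in-degree exactly $1$, and the standard in/out-degree-one argument yields the decomposition into disjoint directed cycles. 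Your argument is more elementary and self-contained, bypassing the two lemmas on $K$ entirely; the paper's detour through those lemmas pays off later, since they are reused (e.g.\ in Lemma~\ref{lem:rank-iff-cycles}) when the Hautus tests are specialized to networks. The steps you flag as needing care (the zero pattern of $\mathcal I_w^-$ matching that of $\mathcal I^-$, and the double use of Assumption~\ref{hypo:graph} for $k\le n$ and for the in-degree count) are handled correctly, so I see no gap.
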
 
To prove the above proposition, we need the two following technical results.
\begin{lemma}\label{lem:lin-comb}
Let the matrices $K$ and $B$ be as in \eqref{eq:KandB}.
Then the columns of  $B$ are linear combinations of the columns of  $K$.
In particular, the range of the matrix $[K, B]$ is equal to that of $K$.
\end{lemma}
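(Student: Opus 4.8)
The plan is to work directly with the explicit formulas for $K$ and $B$ given in \eqref{eq:KandB}, namely $K = \left( \mathcal{I}_w^- \right)^T \mathcal{I}^+ Z$ and $B = \left(\mathcal{I}_w^-\right)^T\Gamma$. Both matrices share the common left factor $\left(\mathcal{I}_w^-\right)^T$, so the natural idea is to show that every column of $B$ lies in the column space of $\left(\mathcal{I}_w^-\right)^T$ and that this column space is in turn contained in (in fact equal to) the column space of $K$. Since $Z$ is an invertible diagonal matrix, $\Ran K = \Ran\left( \left( \mathcal{I}_w^- \right)^T \mathcal{I}^+\right)$, so it suffices to compare $\Ran\left(\left(\mathcal{I}_w^-\right)^T\right)$ with $\Ran\left(\left( \mathcal{I}_w^- \right)^T \mathcal{I}^+\right)$.

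First I would observe that $B = \left(\mathcal{I}_w^-\right)^T \Gamma$ immediately gives $\Ran B \subseteq \Ran\left(\left(\mathcal{I}_w^-\right)^T\right)$, so the columns of $B$ are linear combinations of the columns of $\left(\mathcal{I}_w^-\right)^T$. Next I would show $\Ran\left(\left(\mathcal{I}_w^-\right)^T\right) = \Ran\left(\left(\mathcal{I}_w^-\right)^T \mathcal{I}^+\right)$. The inclusion $\supseteq$ is trivial. For $\subseteq$, the key point is Assumption~\ref{hypo:graph}: every vertex $v_i$ has an incoming edge, i.e., for each $i \in \llbracket 1, k\rrbracket$ there exists $e \in \llbracket 1, n\rrbracket$ with $\varepsilon_e(0) = v_i$, equivalently $i^+_{ie} = 1$. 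This means each row of $\mathcal I^+$ is nonzero; since each column of $\mathcal I^+$ has exactly one nonzero entry (equal to $1$), the matrix $\mathcal I^+$ maps the canonical basis of $\mathbb R^n$ onto a set containing all canonical basis vectors $e_1,\dots,e_k$ of $\mathbb R^k$ (with multiplicity). Hence $\mathcal I^+ \colon \mathbb R^n \to \mathbb R^k$ is surjective, so $\Ran\left(\left(\mathcal{I}_w^-\right)^T \mathcal{I}^+\right) = \left(\mathcal{I}_w^-\right)^T(\Ran \mathcal I^+) = \left(\mathcal{I}_w^-\right)^T(\mathbb R^k) = \Ran\left(\left(\mathcal{I}_w^-\right)^T\right)$.

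Combining the two steps: $\Ran B \subseteq \Ran\left(\left(\mathcal{I}_w^-\right)^T\right) = \Ran\left(\left(\mathcal{I}_w^-\right)^T \mathcal{I}^+\right) = \Ran\left(\left(\mathcal{I}_w^-\right)^T \mathcal{I}^+ Z\right) = \Ran K$, where the third equality uses invertibility of the diagonal matrix $Z$ (its entries $e^{-\zeta_j}$ are nonzero). This shows each column of $B$ is a linear combination of columns of $K$, and therefore $\Ran[K,B] = \Ran K$, as claimed.

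I do not expect a serious obstacle here; the only subtlety is making sure Assumption~\ref{hypo:graph} is invoked correctly to get surjectivity of $\mathcal I^+$ — specifically, that "every vertex has an incoming edge" is exactly the statement that every row of $\mathcal I^+$ has a nonzero entry, which together with the column structure of an incidence matrix forces surjectivity. Everything else is elementary linear algebra.
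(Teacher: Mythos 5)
Your proof is correct. It rests on exactly the same two ingredients as the paper's argument: the common left factor $\left(\mathcal I_w^-\right)^T$ shared by $K$ and $B$, and Assumption~\ref{hypo:graph} guaranteeing that every vertex has an incoming edge; your surjectivity of $\mathcal I^+$ (every row nonzero, each column a canonical basis vector of $\mathbb R^k$) is the range-level formulation of what the paper encodes through a selection map $J$ picking one incoming edge $\varepsilon_{J(i)}$ per vertex $v_i$. The difference is one of packaging: the paper works entrywise, identifying $K_{jJ(i)} = w^-_{ij}e^{-\zeta_{J(i)}}$ and exhibiting the explicit coefficients $B_r = \sum_i \gamma_{ir} e^{\zeta_{J(i)}} K_{J(i)}$, which has the side benefit that the column formula is reused verbatim in the proof of Lemma~\ref{lem:K-prop-cols}; your argument is shorter and non-constructive, trading the explicit combination for the abstract chain $\Ran B \subseteq \Ran\left(\left(\mathcal I_w^-\right)^T\right) = \Ran\left(\left(\mathcal I_w^-\right)^T\mathcal I^+\right) = \Ran K$, with the invertibility of $Z$ handled exactly as it should be. No gap.
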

\begin{proof}
Define first a map $J\colon \llbracket1,k\rrbracket\to \llbracket1,n\rrbracket$ 
 associating  with each $i \in \llbracket 1, k\rrbracket$ a label $j = J(i) \in \llbracket 1, n\rrbracket$ of an edge $\varepsilon_j$ incoming at the vertex $v_i$, i.e., such that $\varepsilon_j(0) = v_i$.
Such a map is well-defined by Assumption~\ref{hypo:graph}. The key point is to notice that, 
for $i\in \llbracket1,k\rrbracket$ and $j\in \llbracket1,n\rrbracket$,
\begin{equation}
\label{eq:KjJi}
K_{jJ(i)}=\sum_{l=1}^kw^-_{lj}i^+_{lJ(i)}e^{-\zeta_{J(i)}}=w^-_{ij}e^{-\zeta_{J(i)}}.
\end{equation}
Moreover, by definition of the matrix $B$, for $j\in \llbracket1,n\rrbracket$ and $r\in \llbracket1,m\rrbracket$,
\[
B_{jr}=\sum_{i=1}^k w^-_{ij}\gamma_{ir},
\]
implying that 
\begin{equation}\label{eq:BK}
B_r=\sum_{i=1}^k\gamma_{ir}e^{\zeta_{J(i)}}K_{J(i)},
\end{equation}
where $B_r$ and $K_{J(i)}$ denote the $r$th column of $B$ and the $J(i)$th column of $K$, respectively.
\end{proof}

\begin{lemma}
\label{lem:K-prop-cols}
If a vertex of $\mathcal{G}$ admits 
at least two incoming edges, then the corresponding matrix $K$ introduced in \eqref{eq:KandB} admits two proportional columns.  
\end{lemma}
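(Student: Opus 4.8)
The plan is to make the columns of $K$ fully explicit starting from the formula $K=\left(\mathcal{I}_w^-\right)^T\mathcal{I}^+Z$ in \eqref{eq:KandB}, and then read off the conclusion directly. First I would exploit the fact, recorded right after the definition of the incidence matrices, that each column of $\mathcal{I}^+$ has exactly one nonzero entry, with $i^+_{le}=1$ precisely when $v_l=\varepsilon_e(0)$; multiplying on the right by $Z=\diag(e^{-\zeta_1},\dots,e^{-\zeta_n})$ merely rescales the $e$th column by $e^{-\zeta_e}$. Carrying out the matrix product componentwise, one gets, for every edge $\varepsilon_e$ with head $v_{l}=\varepsilon_e(0)$, that $K_{je}=w^-_{l j}\,e^{-\zeta_e}$ for all $j\in\llbracket 1,n\rrbracket$; equivalently, the $e$th column of $K$ equals $e^{-\zeta_e}$ times the $l$th row of $\mathcal{I}_w^-$. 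This is exactly \eqref{eq:KjJi} with the distinguished incoming edge $\varepsilon_{J(i)}$ replaced by an arbitrary edge $\varepsilon_e$, so the computation is already essentially present in the proof of Lemma~\ref{lem:lin-comb}.

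Granting this description of the columns, the lemma follows immediately. Suppose a vertex $v_i$ admits two distinct incoming edges $\varepsilon_{e_1}$ and $\varepsilon_{e_2}$, so that $\varepsilon_{e_1}(0)=\varepsilon_{e_2}(0)=v_i$. Then both the $e_1$th and the $e_2$th columns of $K$ are nonzero scalar multiples of the common vector $(w^-_{i1},\dots,w^-_{in})^T$, which is nonzero by the Kirchhoff condition \eqref{eq:kirchoff}; more precisely, $K_{e_1}=e^{-\zeta_{e_1}}(w^-_{i1},\dots,w^-_{in})^T$ and $K_{e_2}=e^{-\zeta_{e_2}}(w^-_{i1},\dots,w^-_{in})^T$, whence $K_{e_1}=e^{\zeta_{e_2}-\zeta_{e_1}}K_{e_2}$. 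Thus $K$ has two proportional columns, as claimed.

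There is no substantial obstacle in this argument; the one point requiring care is the bookkeeping of the incidence matrices together with the orientation convention — the edge $\varepsilon_e$ being oriented from $\varepsilon_e(1)$ to $\varepsilon_e(0)$, so that $\varepsilon_e(0)$ is its head and the single nonzero entry in the $e$th column of $\mathcal{I}^+$ is indexed by that head vertex, which is what couples two edges sharing the same head into proportional columns of $K$.
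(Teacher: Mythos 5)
Your proof is correct and follows essentially the same route as the paper: both compute, via the formula $K=\left(\mathcal{I}_w^-\right)^T\mathcal{I}^+Z$ and the computation underlying \eqref{eq:KjJi}, that the column of $K$ indexed by an edge with head $v_i$ equals $e^{-\zeta_e}(w^-_{i1},\dots,w^-_{in})^T$, so two edges sharing the head $v_i$ give columns $K_{e_1}=e^{\zeta_{e_2}-\zeta_{e_1}}K_{e_2}$. No gaps to report.
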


\begin{proof}
Let $v_i\in \mathcal{V}$ and $\varepsilon_{j_1},\varepsilon_{j_2}\in\mathcal{E}$ be such that 
$\varepsilon_{j_1}(0) = \varepsilon_{j_2}(0) = v_i$. 
The same computation as in \eqref{eq:KjJi} shows that the $j_1$th and $j_2$th column of $K$ are given respectively by
$K_{j_1} = e^{-\zeta_{j_1}}(w^-_{ij})_{j\in \llbracket 1, n\rrbracket}$ and $K_{j_2} = e^{-\zeta_{j_2}} (w^-_{ij})_{j \in \llbracket 1, n\rrbracket}$ and hence 
$K_{j_1} = e^{\zeta_{j_2}-\zeta_{j_1}}K_{j_2}$.
\end{proof}

We can now prove Proposition~\ref{prop:circle}.

\begin{proof}[Proof of Proposition~\ref{prop:circle}]
By Theorem~\ref{Th_appro_con}, a necessary condition for the $L^q$-approximate controllability of \eqref{eq_gantouh}, for some $q\in [1,\infty)$, is that $\rank [K, B] = n$ and the latter implies, according to Lemmas~\ref{lem:lin-comb} and \ref{lem:K-prop-cols}, that every vertex has exactly one incoming edge.

Let $\Omega\colon \mathcal E \to \mathcal E$ be the map associating with an edge $\varepsilon_{j_1} \in \mathcal E$ the unique edge $\varepsilon_{j_2} \in \mathcal E$ such that $\varepsilon_{j_2}(0) = \varepsilon_{j_1}(1)$. We claim that $\Omega$ is surjective. Indeed, given an edge $\varepsilon_j \in \mathcal E$, let $v_i = \varepsilon_j(0)$. By Assumption~\ref{hypo:graph}, there exists $\varepsilon_{j'} \in \mathcal E$ such that $\varepsilon_{j'}(1) = v_i$, and the uniqueness of the incoming edge at $v_i$ proves that $\Omega(j') = j$.

Since $\Omega$ is a surjective map from the finite set $\mathcal E$ into itself, then $\Omega$ is a bijection, and hence a permutation of $\mathcal E$. Henceforth, $\Omega$ can be decomposed in the product of disjoint cycles, each of them corresponds to a directed cycle graph, and the disjoint union of these directed cycle graphs is equal to $\mathcal G$.
\end{proof}

\subsection{Hautus--Yamamoto tests for controllability of flows in networks}
\label{sec:HY-networks}

In this section, we investigate the $L^q$-approximate and exact controllability of System~\eqref{eq_gantouh}. By taking into account Proposition~\ref{prop:circle}, this amounts to  applying Theorems~\ref{Th_appro_con} and \ref{Th_exact_con1}
to the case where $\mathcal{G}$ is the disjoint union of directed cycle graphs.

To proceed with the second step, we need the following notation.
Given $h \in \mathbb N^\ast$, we denote in this section by $C_h$ the cyclic permutation matrix of size $h \times h$, defined by
\[
C_h = \begin{pmatrix}
0 & 0 & \cdots & 0 & 1 \\
1 & 0 & \cdots & 0 & 0 \\
0 & \ddots & \ddots & \vdots & \vdots \\
\vdots & \ddots & \ddots & 0 & 0 \\
0 & \cdots & 0 & 1 & 0
\end{pmatrix}
\]
(with the convention $C_1 = \begin{pmatrix}1\end{pmatrix}$).
Note that $C_h e_h = e_1$ and $C_he_j=e_{j+1}$ for $j \in \llbracket 1, h-1\rrbracket$, where $(e_1,\dotsc,e_h)$ denotes the canonical basis of $\mathbb{R}^h$. 

When dealing 
with a graph $\mathcal G$ that is the disjoint union of directed cycle graphs, we will denote by $L$ the number of directed cycle graphs in $\mathcal G$. For $l \in \llbracket 1, L\rrbracket$, let $h_l$ denote the number of edges of the $l$th cycle of $\mathcal G$, and set
\[\mathcal J_l = \left\llbracket 1 + \sum_{r=1}^{l - 1} h_r, \sum_{r=1}^l h_r \right\rrbracket.\]

We relabel the edges of $\mathcal G$ in such a way that $\varepsilon_j$ belongs to the $l$th cycle of $\mathcal G$ if and only if $j \in \mathcal J_l$, and we assume in addition that, for each $l \in \llbracket 1, L\rrbracket$, we have $\varepsilon_j(0) = \varepsilon_{j+1}(1)$ for every $j \in \mathcal J_l \setminus \{\max \mathcal J_l\}$ and $\varepsilon_{\max \mathcal J_l}(0) = \varepsilon_{\min \mathcal J_l}(1)$, i.e., if one follows the edges of a cycle, their indices increase by $1$ when one goes from one edge to the next one, until the last edge of the cycle, which leads back to the first one.

Any graph $\mathcal G$ made of a disjoint union of directed cycles has necessarily as many vertices as edges, i.e., $k = n$. In this case, up to relabeling the vertices of $\mathcal G$, we assume that $\varepsilon_j(1) = v_j$ for every $j \in \llbracket 1, n\rrbracket$, and this relabeling yields $\mathcal I^- = I_n$ and $\mathcal I^+ = \diag(C_{h_l})_{l=1}^L$. In addition, Equation~\eqref{eq:kirchoff} implies that $\mathcal I_w^- = \mathcal I^-$, and hence we have from \eqref{eq:KandB} that
\begin{equation}\label{eq:Kinblocs}
K = \diag(C_{h_l}Z_l)_{l=1}^L, \qquad B = \Gamma, 
\end{equation}
where $Z_l=\diag(e^{-\zeta_j})_{j\in \mathcal J_j}$.
 
Let us provide the following result, which deals with the first condition for controllability in Theorems~\ref{Th_appro_con} and \ref{Th_exact_con1}, namely the rank condition on $[K, B]$.

\begin{lemma}
\label{lem:rank-iff-cycles}
With the notations above, 
$\rank[K, B] = n$ if and only if $\mathcal{G}$ is the disjoint union of directed cycle graphs. 
\end{lemma}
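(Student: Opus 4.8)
The plan is to prove the two implications separately, using the structural results already established. First I would address the easier direction: if $\mathcal G$ is \emph{not} a disjoint union of directed cycle graphs, then by the argument in the proof of Proposition~\ref{prop:circle} (via Lemmas~\ref{lem:lin-comb} and~\ref{lem:K-prop-cols}), some vertex of $\mathcal G$ has at least two incoming edges. Lemma~\ref{lem:K-prop-cols} then produces two proportional columns of $K$, so $\rank K \le n - 1$; and Lemma~\ref{lem:lin-comb} gives $\Ran[K,B] = \Ran K$, whence $\rank[K,B] = \rank K \le n-1 < n$. This settles the ``only if'' part.

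For the converse, I would assume $\mathcal G$ is a disjoint union of directed cycle graphs and show directly that $K$ already has full rank $n$, which a fortiori gives $\rank[K,B] = n$. Here I would use the explicit block form~\eqref{eq:Kinblocs}: $K = \diag(C_{h_l} Z_l)_{l=1}^L$. Each $Z_l = \diag(e^{-\zeta_j})_{j \in \mathcal J_l}$ is diagonal with strictly positive (in particular nonzero) diagonal entries, hence invertible, and each cyclic permutation matrix $C_{h_l}$ is invertible with $\det C_{h_l} = \pm 1$. Therefore each block $C_{h_l} Z_l$ is invertible, so $\det K = \prod_{l=1}^L \det(C_{h_l}) \det(Z_l) \ne 0$, i.e.\ $\rank K = \sum_{l=1}^L h_l = n$. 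Consequently $\rank[K,B] = n$ as well, completing the proof.

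The main (and really only) subtlety to be careful about is bookkeeping: one must invoke the relabeling of edges and vertices described just before~\eqref{eq:Kinblocs} so that $K$ genuinely takes the block-diagonal form stated, and one must recall that Assumption~\ref{hypo:graph} forces $k = n$ for a disjoint union of cycles, so that the rank condition ``$\rank[K,B] = n$'' is the right normalization. I do not anticipate any genuine obstacle here — once the block structure~\eqref{eq:Kinblocs} is in hand and the positivity of the $e^{-\zeta_j}$ is noted, the computation of $\det K$ is immediate, and the reverse direction is just a repackaging of the proof of Proposition~\ref{prop:circle}.
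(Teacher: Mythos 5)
Your proof is correct and follows essentially the same route as the paper: the ``only if'' direction via Lemmas~\ref{lem:lin-comb} and~\ref{lem:K-prop-cols} together with the contrapositive of the argument in Proposition~\ref{prop:circle} (Assumption~\ref{hypo:graph} forcing a vertex with two incoming edges), and the ``if'' direction via the block form~\eqref{eq:Kinblocs} with each block $C_{h_l}Z_l$ invertible. No issues to report.
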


\begin{proof}
Taking into account Lemma~\ref{lem:lin-comb} and the argument of Proposition~\ref{prop:circle}, it remains to prove that $\rank K = n$ if $\mathcal{G}$ is the disjoint union of directed cycle graphs. The conclusion follows using \eqref{eq:Kinblocs} and the fact that each matrix $C_{h_l}$ is invertible.
\end{proof}

We next provide a technical result which will be used to refine the spectral condition given in Item~\ref{item:AC-1} from Theorem~\ref{Th_appro_con} in the case where $\mathcal G$ is a disjoint union of directed cycle graphs.
For that purpose, we recall that the matrix $H(p)$ defined in \eqref{eq:Hp}
is now given by 
\[
H_{\rm net}(p)=\diag(\mathcal{T}_l(p)-C_{h_l}Z_l)_{l=1}^L,
\]
where $\mathcal{T}_l(p) = \diag(e^{p\tau_j})_{j \in \mathcal J_l}$.

\begin{lemma}\label{lem:Dl}
Let $\mathcal{D}_l$ for $l\in\llbracket 1,L\rrbracket$ and $\mathcal{R}_{\mathcal{G}}$ be the subsets of the complex plane $\mathbb{C}$ defined by 
\[
\mathcal{D}_l=\left\{-\frac{\sum_{j\in \mathcal{J}_l}\zeta_j}{\sum_{j\in \mathcal{J}_l}\tau_j}+i\frac{2k\pi}{\sum_{j\in \mathcal{J}_l}\tau_j}\suchthat k\in\mathbb{Z}\right\},
\qquad 
\mathcal{R}_{\mathcal{G}}=
\bigcup_{l=1}^L\mathcal{D}_l.
\]
Then the following holds true.
\begin{enumerate}
    \item \label{item:uno}
    The matrix $\mathcal{T}_l(p)-C_{h_l}Z_l$ is invertible if and only if $p\notin \mathcal{D}_l$. As a consequence, the matrix $H_{\rm net}(p)$  is invertible if and only if $p\notin \mathcal{R}_{\mathcal{G}}$.
    \item\label{item:due} For $p\in \mathcal{D}_l$, the rank of $\mathcal{T}_l(p)-C_{h_l}Z_l$ is equal to $h_l-1$ and its range is equal to the orthogonal space 
    to  the vector $y_l(p)=(y_{lj}(p))_{j=1}^{h_l}$ whose components are given by
    \begin{equation}\label{eq:zl}
    y_{lj}(p) = \prod_{t = \min\mathcal J_l}^{j - 2 + \min\mathcal J_l} e^{p \tau_t + \zeta_t} ,\qquad j\in\llbracket 1,h_l\rrbracket.
    \end{equation}
\end{enumerate}
\end{lemma}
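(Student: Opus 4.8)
The plan is to analyze the matrix $\mathcal{T}_l(p) - C_{h_l} Z_l$ directly by exploiting its cyclic structure. First I would set $h = h_l$ and relabel the indices in $\mathcal{J}_l$ as $1, \dotsc, h$ for notational convenience, so that the matrix in question is $A(p) := \mathcal{T}(p) - C_h Z$, where $\mathcal{T}(p) = \diag(e^{p\tau_1}, \dotsc, e^{p\tau_h})$ and $Z = \diag(e^{-\zeta_1}, \dotsc, e^{-\zeta_h})$. Since $C_h$ is the cyclic permutation sending $e_j \mapsto e_{j+1}$ (indices mod $h$), the matrix $C_h Z$ has exactly one nonzero entry per row and per column: its $(j+1, j)$ entry is $e^{-\zeta_j}$ (and the $(1,h)$ entry is $e^{-\zeta_h}$). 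Thus $A(p)$ is a "cyclic bidiagonal" matrix, and its determinant can be computed by cofactor expansion: one obtains
\begin{equation*}
\det A(p) = \prod_{j=1}^h e^{p\tau_j} - \prod_{j=1}^h e^{-\zeta_j} = e^{p \sum_j \tau_j} - e^{-\sum_j \zeta_j},
\end{equation*}
because the only two permutations contributing to the determinant of such a cyclic bidiagonal matrix are the identity (giving the product of the diagonal entries of $\mathcal{T}(p)$) and the full cycle (giving $\pm$ the product of the off-diagonal entries of $-C_h Z$, and the sign works out to $-1$ since the cycle has the right parity). This determinant vanishes precisely when $e^{p \sum_j \tau_j} = e^{-\sum_j \zeta_j}$, i.e., when $p \sum_j \tau_j \in -\sum_j \zeta_j + 2\pi i \mathbb{Z}$, which is exactly the condition $p \in \mathcal{D}_l$. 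This proves Item~\ref{item:uno} for each block, and the statement for $H_{\mathrm{net}}(p) = \diag(\mathcal{T}_l(p) - C_{h_l} Z_l)_{l=1}^L$ follows since a block-diagonal matrix is invertible iff each block is.

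For Item~\ref{item:due}, fix $p \in \mathcal{D}_l$. The rank claim is that $A(p)$ has rank $h-1$: it suffices to exhibit an $(h-1) \times (h-1)$ submatrix with nonzero determinant. Deleting, say, the first row and the $h$th column of $A(p)$ leaves a genuinely lower-triangular matrix whose diagonal entries are the off-diagonal entries $-e^{-\zeta_j}$ for $j = 1, \dotsc, h-1$, none of which vanish; hence $\rank A(p) \geq h-1$, and since $\det A(p) = 0$ we get $\rank A(p) = h-1$ exactly. It remains to identify $\Ran A(p)$ as the orthogonal complement of $y_l(p)$. Since $A(p)$ has rank $h-1$, its range is a hyperplane, namely the orthogonal complement of $\ker A(p)^*$, so I would instead compute the left kernel: find the (up to scalar) unique row vector $w^T$ with $w^T A(p) = 0$. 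Writing this out componentwise gives a first-order linear recursion relating consecutive components of $w$ (the $j$th column of $A(p)$ has entries $e^{p\tau_j}$ in row $j$ and $-e^{-\zeta_j}$ in row $j+1$, so $w^T A(p) = 0$ reads $w_j e^{p\tau_j} = w_{j+1} e^{-\zeta_j}$, indices mod $h$). Solving this recursion yields $w_j = w_1 \prod_{t=1}^{j-1} e^{p\tau_t + \zeta_t}$, and the cyclic consistency condition closing the loop is exactly $p \in \mathcal{D}_l$, which holds by assumption; after translating back to the original indexing in $\mathcal{J}_l$ this is precisely the vector $y_l(p)$ of \eqref{eq:zl}. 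Therefore $\Ran A(p) = (\ker A(p)^*)^\perp = \{y_l(p)\}^\perp$, as claimed. (One small point to handle: the formula \eqref{eq:zl} has real exponents $p\tau_t + \zeta_t$ written without the understanding that $p$ is complex; I would just note that the products are over complex numbers and the argument is unchanged.)

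**The main obstacle** I anticipate is purely bookkeeping: keeping the cyclic index shifts, the placement of the lone $(1,h)$ corner entry, and the sign of the $h$-cycle permutation all consistent, especially when translating between the local indexing $1, \dotsc, h_l$ and the global block indexing $\mathcal{J}_l$. The conceptual content — that a cyclic bidiagonal matrix has an explicit two-term determinant and, when singular, a rank-one-deficient structure with an explicitly solvable left kernel recursion — is straightforward, so no genuinely hard estimate or delicate argument is involved; the care is entirely in the indices and signs. I would double-check the sign in $\det A(p)$ by testing the case $h = 1$ (where $A(p) = e^{p\tau_1} - e^{-\zeta_1}$ directly) and $h = 2$ (a $2\times 2$ determinant) before committing to the general formula.
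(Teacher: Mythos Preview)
Your proposal is correct. For Item~\ref{item:uno} you compute $\det A(p)$ directly from the cyclic bidiagonal structure, whereas the paper instead characterizes singularity by solving for a nonzero \emph{right} kernel vector $x$ with $\mathcal T_l(p)x = C_{h_l}Z_l x$, obtaining a recursion on the components of $x$ whose closing condition is exactly $p\in\mathcal D_l$. Both routes are elementary and of the same length; your determinant argument is slightly more direct, while the paper's kernel argument has the side benefit of showing at once that $\dim\ker A(p)=1$ for $p\in\mathcal D_l$, which gives the rank claim in Item~\ref{item:due} by rank--nullity without your explicit minor. For Item~\ref{item:due} the two approaches coincide: both identify $\Ran A(p)$ as the orthogonal of the left null space (the paper writes this as the kernel of $\mathcal T_l(p)-Z_l C_{h_l}^T$) and solve the same one-step recursion for the left null vector. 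Two small slips to clean up: the submatrix obtained by deleting row~$1$ and column~$h$ is \emph{upper} triangular, not lower triangular (the diagonal entries $-e^{-\zeta_j}$ are unaffected, so your conclusion stands); and where you write $\ker A(p)^*$ you are actually computing $\ker A(p)^T$ via $w^TA(p)=0$. The paper works with the bilinear orthogonal $x\cdot y = x^T y$ rather than the Hermitian one, so your computation already matches the intended statement and the formula~\eqref{eq:zl}.
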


\begin{proof}
Fix $l\in\llbracket 1,L\rrbracket$. 
The subset of  
$\mathbb{C}$ for which the matrix $\mathcal{T}_l(p)-C_{h_l}Z_l$ is 
singular consists of 
the complex numbers $p$ so that there exists a nonzero vector $x\in\mathbb{C}^{h_l}$ such that $\mathcal{T}_l(p)x = C_{h_l} Z_l x$. Easy computations yield that 
\[
x_j=\left(\prod_{t=1+\min\mathcal{J}_l }^{j-1+\min\mathcal{J}_l} e^{-(\zeta_{t-1} + p\tau_t)} \right) x_1, \quad j\in \llbracket 2, h_l\rrbracket,
\qquad x_1=e^{-\left(\zeta_{\max\mathcal{J}_l}+p\tau_{\min\mathcal{J}_l}\right)}x_{h_l}.
\]
One deduces that $\mathcal{T}_l(p)-C_{h_l}Z_l$ is 
singular if and only if $p\in\mathbb{C}$  solves the equation
\[
\prod_{t \in \mathcal J_l} e^{\zeta_t + p\tau_t} = 1,
\]
i.e., $p\in \mathcal{D}_l$. This concludes the proof of Item~\ref{item:uno}. 

Moreover, if $p\in \mathcal{D}_l$, it is clear from the previous computation that the kernel of $\mathcal{T}_l(p)-C_{h_l}Z_l$ has dimension one. In addition, its range is the orthogonal to the kernel of $\mathcal{T}_l(p)-Z_lC_{h_l}^T$. Identical computations as above yield that this kernel is 
equal to $\mathbb{C}y_l(p)$ with $y_l(p)$ given in \eqref{eq:zl}. The proof of Item~\ref{item:due} and that of the lemma are complete.
\end{proof}

We are now able to state our main
approximate controllability result for System~\eqref{eq_gantouh}. We first need the following notation.

For $p\in \mathcal D_l$ let $\tilde y_l(p)\in \mathbb C^n$ be the vector obtained from the vector $y_l(p)$ introduced in Item~\ref{item:due} of Lemma~\ref{lem:Dl}
by identifying $\mathbb C^{h_l}$  with the subspace of $\mathbb C^n$ corresponding to the indices in $\mathcal J_l$. 

For every $p\in \mathcal{R}_{\mathcal{G}}$, let
\[ V(p)=\Span\{\tilde y_l(p)\mid l\in \llbracket1,L\rrbracket \text{ such that } p\in \mathcal D_l\}.\]
Notice that $\mathcal{R}_{\mathcal{G}}\ni p\mapsto V(p)$ takes finitely many values if, for every $l\in \llbracket 1,L\rrbracket$, the scalars $\tau_j$, $j\in \mathcal{J}_l$, are commensurable, i.e., rational multiples of a common real number.

\begin{theorem}
\label{th_appro_network_gantouh}
For $q \in [1,+\infty)$, System~\eqref{eq_gantouh} is $L^q$-approximately controllable in time $\tau_{1}+\dotsb+\tau_n$ if and only if 
$\mathcal G$ is the disjoint union of directed cyclic graphs and 
\begin{equation}
\label{eq:app-control-graph}
    V(p)\cap \ker \Gamma^T= \{0\} \qquad \mbox{for every }p\in \mathcal{R}_{\mathcal{G}}.
\end{equation}
\end{theorem}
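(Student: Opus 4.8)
The plan is to apply Theorem~\ref{Th_appro_con} to the network system~\eqref{eq_gantouh}, reducing the abstract Hautus condition to the concrete statement~\eqref{eq:app-control-graph}. First I would recall that, by Theorem~\ref{Th_appro_con}, $L^q$-approximate controllability in time $\tau_1+\cdots+\tau_n$ is equivalent to $\rank[K,B]=n$ together with $\rank[H(p),B]=n$ for every $p\in\mathbb{C}$. By Lemma~\ref{lem:rank-iff-cycles}, the condition $\rank[K,B]=n$ holds if and only if $\mathcal{G}$ is the disjoint union of directed cycle graphs, which also gives the first half of the claimed equivalence and, via Proposition~\ref{prop:circle}, shows that $\mathcal{G}$ being such a union is necessary anyway. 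So the remaining task is: assuming $\mathcal{G}$ is a disjoint union of directed cycles (so $k=n$, $B=\Gamma$, and $H(p)=H_{\rm net}(p)=\diag(\mathcal{T}_l(p)-C_{h_l}Z_l)_{l=1}^L$), show that $\rank[H_{\rm net}(p),\Gamma]=n$ for all $p\in\mathbb{C}$ is equivalent to \eqref{eq:app-control-graph}.

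The key step is a rank computation exploiting the block-diagonal structure of $H_{\rm net}(p)$. I would argue via the equivalent dual formulation: $\rank[H_{\rm net}(p),\Gamma]<n$ iff there exists $g\in\mathbb{C}^n\setminus\{0\}$ with $g^TH_{\rm net}(p)=0$ and $g^T\Gamma=0$, i.e., $g\in\ker\Gamma^T$ and $g\in\ker H_{\rm net}(p)^T$. Now for $p\notin\mathcal{R}_{\mathcal{G}}$, Item~\ref{item:uno} of Lemma~\ref{lem:Dl} says $H_{\rm net}(p)$ is invertible, so $\ker H_{\rm net}(p)^T=\{0\}$ and the rank condition holds automatically; this is why it suffices to quantify over $p\in\mathcal{R}_{\mathcal{G}}$ in \eqref{eq:app-control-graph}. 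For $p\in\mathcal{R}_{\mathcal{G}}$, the block structure gives $\ker H_{\rm net}(p)^T=\bigoplus_{l}\ker(\mathcal{T}_l(p)-C_{h_l}Z_l)^T$ (with the $l$-th summand being $\{0\}$ unless $p\in\mathcal D_l$), and Item~\ref{item:due} of Lemma~\ref{lem:Dl}, together with the fact that $\Ran(\mathcal{T}_l(p)-C_{h_l}Z_l)=y_l(p)^\perp$ implies $\ker(\mathcal{T}_l(p)-C_{h_l}Z_l)^T=\mathbb{C}\,y_l(p)$ (a matrix and its conjugate transpose have kernel/range related by orthogonality), identifies this kernel as precisely $V(p)=\Span\{\tilde y_l(p)\mid p\in\mathcal D_l\}$. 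Hence $\rank[H_{\rm net}(p),\Gamma]<n$ for some $p$ iff $V(p)\cap\ker\Gamma^T\neq\{0\}$ for some $p\in\mathcal{R}_{\mathcal{G}}$, which is the negation of \eqref{eq:app-control-graph}.

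I expect the main subtlety to be a careful bookkeeping point rather than a deep one: one must be slightly careful about whether $\ker(\mathcal{T}_l(p)-C_{h_l}Z_l)^T$ really equals $\mathbb C\,y_l(p)$ as opposed to its conjugate, since Lemma~\ref{lem:Dl} phrases things in terms of the range of $\mathcal{T}_l(p)-C_{h_l}Z_l$ and the kernel of $\mathcal{T}_l(p)-Z_lC_{h_l}^T$; the relation $\Ran(A)^\perp=\ker(A^\ast)$ must be invoked with attention to conjugation, but since the vectors $y_l(p)$ are defined by an explicit product formula and enter \eqref{eq:app-control-graph} only through the span, this causes no real difficulty. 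The one genuinely non-routine ingredient is the reduction of the quantifier from all $p\in\mathbb{C}$ to $p\in\mathcal{R}_{\mathcal{G}}$, which is handled cleanly by Item~\ref{item:uno} of Lemma~\ref{lem:Dl}. With these pieces in place the proof is short: invoke Theorem~\ref{Th_appro_con}, use Lemma~\ref{lem:rank-iff-cycles} for the first condition, specialize $H(p)$ to $H_{\rm net}(p)$ and $B$ to $\Gamma$, and translate the Hautus rank condition into \eqref{eq:app-control-graph} via the dual-vector argument and Lemma~\ref{lem:Dl}.
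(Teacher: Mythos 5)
Your proposal is correct and follows essentially the same route as the paper: reduce via Theorem~\ref{Th_appro_con} and Lemma~\ref{lem:rank-iff-cycles}, use Item~\ref{item:uno} of Lemma~\ref{lem:Dl} to dispose of $p\notin\mathcal{R}_{\mathcal{G}}$, and for $p\in\mathcal{R}_{\mathcal{G}}$ identify the left-kernel of the block-diagonal $H_{\rm net}(p)$ with $V(p)$ via Item~\ref{item:due}. The paper phrases this last step as $(\Ran[H_{\rm net}(p),B])^\perp=(\Ran H_{\rm net}(p))^\perp\cap\ker\Gamma^T$ with $\Ran H_{\rm net}(p)=V(p)^\perp$, which is the same dual-vector argument you give.
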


\begin{proof}
Thanks to Theorem~\ref{Th_appro_con} and Lemma~\ref{lem:rank-iff-cycles}, the theorem is proved if we show that Item~\ref{item:AC-1} from Theorem~\ref{Th_appro_con} is equivalent to \eqref{eq:app-control-graph}. By Lemma~\ref{lem:Dl}, if $p \notin \mathcal R_{\mathcal G}$, we have $\rank H_{\rm net}(p) = n$. The conclusion now follows since, for $p \in \mathcal R_{\mathcal G}$, we have $(\Ran [H_{\rm net}(p), B])^\perp = (\Ran H_{\rm net}(p))^\perp \cap (\Ran \Gamma)^\perp= (\Ran H_{\rm net}(p))^\perp \cap \ker \Gamma^T$ and $ \Ran H_{\rm net}(p) = V(p)^\perp$ by Lemma~\ref{lem:Dl}. 
\end{proof}

We deduce the following corollary in the case where $V(p)$ is of dimension one for every $p\in \mathcal{R}_{\mathcal{G}}$.

\begin{corollary}
Assume that $\mathcal G$ is the disjoint union of directed cyclic graphs and that, for $l_1\ne l_2$ in $\llbracket1,L\rrbracket$,
\[\frac{\sum_{j\in \mathcal{J}_{l_1}}\zeta_j}{\sum_{j\in \mathcal{J}_{l_1}}\tau_j}\ne \frac{\sum_{j\in \mathcal{J}_{l_2}}\zeta_j}{\sum_{j\in \mathcal{J}_{l_2}}\tau_j}.\]
Then, for $q \in [1,+\infty)$, System~\eqref{eq_gantouh} is $L^q$-approximately controllable in time $\tau_{1}+\dotsb+\tau_n$ if and only if 
$\tilde y_l(p)^T \Gamma\ne 0$ for every $l\in \llbracket1,L\rrbracket$ and $p\in \mathcal{D}_l$. 
\end{corollary}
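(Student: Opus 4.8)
The plan is to obtain the corollary as a direct specialization of Theorem~\ref{th_appro_network_gantouh}. Since $\mathcal G$ is assumed to be the disjoint union of directed cyclic graphs, that theorem reduces the claim to showing that, under the separation hypothesis on the ratios $\rho_l := \frac{\sum_{j\in\mathcal J_l}\zeta_j}{\sum_{j\in\mathcal J_l}\tau_j}$, condition \eqref{eq:app-control-graph} is equivalent to the non-vanishing of $\tilde y_l(p)^T\Gamma$ for all $l\in\llbracket 1,L\rrbracket$ and $p\in\mathcal D_l$.

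First I would use the description of $\mathcal D_l$ from Lemma~\ref{lem:Dl}: every $p\in\mathcal D_l$ satisfies $\Re(p) = -\rho_l$. Hence, if $p\in\mathcal D_{l_1}\cap\mathcal D_{l_2}$ with $l_1\neq l_2$, taking real parts would give $\rho_{l_1}=\rho_{l_2}$, contradicting the hypothesis. Consequently, for each $p\in\mathcal R_{\mathcal G}$ there is a unique index $l(p)\in\llbracket 1,L\rrbracket$ with $p\in\mathcal D_{l(p)}$, so that $V(p)=\Span\{\tilde y_{l(p)}(p)\}$. Moreover, $\tilde y_{l(p)}(p)\neq 0$, since by \eqref{eq:zl} its component of index $\min\mathcal J_{l(p)}$ equals the empty product, namely $1$. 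Therefore $V(p)$ is a line, and $V(p)\cap\ker\Gamma^T=\{0\}$ holds if and only if $\tilde y_{l(p)}(p)\notin\ker\Gamma^T$, i.e.\ $\Gamma^T\tilde y_{l(p)}(p)\neq 0$, which is the same as $\tilde y_{l(p)}(p)^T\Gamma\neq 0$.

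Finally, since each $\mathcal D_l$ is nonempty, as $p$ ranges over $\mathcal R_{\mathcal G}$ the pair $(l(p),p)$ exhausts all pairs $(l,p)$ with $l\in\llbracket 1,L\rrbracket$ and $p\in\mathcal D_l$ (indeed $p\in\mathcal D_l$ forces $l(p)=l$ by uniqueness). Hence \eqref{eq:app-control-graph} is equivalent to requiring $\tilde y_l(p)^T\Gamma\neq 0$ for every such pair, and combining this with Theorem~\ref{th_appro_network_gantouh} yields the result. The argument is essentially bookkeeping; the only point that needs a moment of care is ruling out $p\in\mathcal D_{l_1}\cap\mathcal D_{l_2}$ for $l_1\neq l_2$ (which is precisely where the separation hypothesis enters) and checking that the spanning vector $\tilde y_{l(p)}(p)$ is nonzero, so that the intersection condition can be rephrased as the stated scalar non-vanishing condition.
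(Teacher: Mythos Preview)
Your proposal is correct and follows exactly the approach implicit in the paper, which simply remarks that the corollary is the specialization of Theorem~\ref{th_appro_network_gantouh} to the case where $V(p)$ is one-dimensional for every $p\in\mathcal R_{\mathcal G}$. You have supplied the obvious details---disjointness of the $\mathcal D_l$ via the real-part argument, non-vanishing of $\tilde y_l(p)$ from its first component---exactly as intended.
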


We next turn to our exact controllability result for System~\eqref{eq_gantouh}. We will rely on Theorem~\ref{Th_exact_con1} and, for that purpose, we first extend Lemma~\ref{lem:Dl}.

\begin{lemma}\label{lem:Dl-closure}
Let $M\in \overline{H_{\rm net}(\mathbb C)}$ and let $(p_\nu)_{\nu\in \mathbb N}$ be such that $M=\lim_{\nu\to\infty}H_{\rm net}(p_\nu)$. 
In particular, for every $j \in \llbracket 1, n \rrbracket$, $e^{p_\nu\tau_j}$ converges as $\nu$ goes to infinity.
Then the following holds true.
\begin{enumerate}
    \item \label{item:uno-one}
    The matrix $M$ is singular if and only if 
    there exists $l\in \llbracket1,L\rrbracket$ such that $\lim_{\nu\to\infty}\Re(p_\nu)=-\frac{\sum_{j\in \mathcal{J}_{l_1}}\zeta_j}{\sum_{j\in \mathcal{J}_{l_1}}\tau_j}$
and $\Im(p_\nu)=k_\nu \frac{2\pi}{\sum_{j\in \mathcal{J}_{l_1}}\tau_j}+\eta_\nu$ with $k_\nu\in \mathbb Z$ and $\lim_{\nu\to \infty}\eta_\nu=0$.
   \item\label{item:due-two} 
   If $M$ is singular and $l\in \llbracket1,L\rrbracket$ is an in Item~\ref{item:uno-one}, then 
    rank of the $l$th block of $M$ is equal to $h_l-1$ and its range is equal to the orthogonal space 
    to  the vector $Y_l(M)=\lim_{\nu\to \infty}y_l(p_\nu)$, where, for $p\in \mathbb C$,   $y_l(p)$ is defined as in  \eqref{eq:zl}.
\end{enumerate}
\end{lemma}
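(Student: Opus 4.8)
The plan is to reduce everything to Lemma~\ref{lem:Dl} by passing to the limit along the sequence $(p_\nu)_{\nu\in\mathbb N}$, exploiting the block-diagonal structure of $H_{\rm net}$. Since $H_{\rm net}(p)=\diag(\mathcal T_l(p)-C_{h_l}Z_l)_{l=1}^L$, the limiting matrix $M$ is block diagonal, $M=\diag(M_l)_{l=1}^L$ with $M_l=\lim_{\nu\to\infty}(\mathcal T_l(p_\nu)-C_{h_l}Z_l)$, and $M$ is singular if and only if at least one block $M_l$ is singular. So it suffices to analyze a single block. For that block, the key computation is the same determinant formula used in the proof of Lemma~\ref{lem:Dl}: one has $\det(\mathcal T_l(p)-C_{h_l}Z_l)=\prod_{t\in\mathcal J_l}e^{p\tau_t}-\prod_{t\in\mathcal J_l}e^{-\zeta_t}$ (up to a sign $(-1)^{h_l-1}$), which after taking limits along $(p_\nu)$ reads $\det M_l=\prod_{t\in\mathcal J_l}(\lim_\nu e^{p_\nu\tau_t})-e^{-\sum_{t\in\mathcal J_l}\zeta_t}$. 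Writing $\prod_{t\in\mathcal J_l}e^{p_\nu\tau_t}=e^{p_\nu\sum_{t\in\mathcal J_l}\tau_t}=e^{(\sum\tau_t)\Re p_\nu}e^{i(\sum\tau_t)\Im p_\nu}$, the modulus of this quantity is $e^{(\sum_{t\in\mathcal J_l}\tau_t)\Re p_\nu}$, so $\det M_l=0$ forces $\lim_\nu\Re p_\nu=-\frac{\sum_{j\in\mathcal J_l}\zeta_j}{\sum_{j\in\mathcal J_l}\tau_j}$ (note the modulus of $\lim_\nu\prod e^{p_\nu\tau_t}$ must equal $e^{-\sum\zeta_t}$) and then the argument condition, namely that $e^{i(\sum_{t\in\mathcal J_l}\tau_t)\Im p_\nu}\to 1$, which is exactly the statement that $\Im p_\nu=k_\nu\frac{2\pi}{\sum_{j\in\mathcal J_l}\tau_j}+\eta_\nu$ with $k_\nu\in\mathbb Z$ and $\eta_\nu\to0$. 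This proves Item~\ref{item:uno-one} in both directions, since conversely those two conditions on $(p_\nu)$ force $\prod_{t\in\mathcal J_l}e^{p_\nu\tau_t}\to e^{-\sum\zeta_t}$ and hence $\det M_l\to0$.

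For Item~\ref{item:due-two}, fix $l$ as in Item~\ref{item:uno-one}. By Lemma~\ref{lem:Dl}, for every $p$ with $\mathcal T_l(p)-C_{h_l}Z_l$ singular the rank is exactly $h_l-1$; but here $M_l$ is a \emph{limit} of matrices $\mathcal T_l(p_\nu)-C_{h_l}Z_l$ which need not themselves be singular, so I cannot invoke the rank statement directly. Instead I argue as follows. The entries of $M_l$ are explicit: the diagonal carries $\lim_\nu e^{p_\nu\tau_j}$, $j\in\mathcal J_l$, while the off-diagonal (cyclic) entries are those of $-C_{h_l}Z_l$, which are fixed negative numbers. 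The first-order $(h_l-1)\times(h_l-1)$ minor of $M_l$ obtained by deleting, say, the first row and last column equals $\prod_{j}(-e^{-\zeta_j})$ up to reindexing — more precisely it is the product of the cyclic subdiagonal entries, none of which vanish — so $M_l$ has rank at least $h_l-1$. Combined with $\det M_l=0$ this gives $\rank M_l=h_l-1$ exactly. For the range: the vector $y_l(p)$ defined by \eqref{eq:zl} satisfies $y_l(p)^T(\mathcal T_l(p)-C_{h_l}Z_l)=0$ whenever $p\in\mathcal D_l$; I would like the limiting identity $Y_l(M)^T M_l=0$. Since each entry of $y_l(p)$ is a finite product of terms $e^{p\tau_t+\zeta_t}$, which converge as $\nu\to\infty$ (the $e^{p_\nu\tau_t}$ all converge by hypothesis), the limit $Y_l(M)=\lim_\nu y_l(p_\nu)$ exists, and moreover $Y_l(M)\neq0$ because its first component is $1$. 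The identity $y_l(p_\nu)^T(\mathcal T_l(p_\nu)-C_{h_l}Z_l)$ is not identically zero (since $p_\nu$ need not lie in $\mathcal D_l$) but a direct computation — the same telescoping used in Lemma~\ref{lem:Dl} — shows that all but one of its coordinates vanish identically in $p$, and the remaining coordinate equals $\prod_{t\in\mathcal J_l}e^{p\tau_t+\zeta_t}-1$, which tends to $0$ by Item~\ref{item:uno-one}. Hence $Y_l(M)^T M_l=0$, so $\Ran M_l\subseteq Y_l(M)^\perp$, and equality follows from the dimension count $\rank M_l=h_l-1=\dim Y_l(M)^\perp$.

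The main obstacle, and the one place where some care is genuinely required, is precisely this last point: because $M_l$ is a limit of matrices that are generically \emph{nonsingular}, one cannot simply quote Lemma~\ref{lem:Dl} for the kernel/range description but must instead carry the left-null-vector identity through the limit. The cleanest way to do this is to record, already inside (or right after) the proof of Lemma~\ref{lem:Dl}, the exact value of the ``defect'' $y_l(p)^T(\mathcal T_l(p)-C_{h_l}Z_l)$ for \emph{all} $p\in\mathbb C$ — namely that it is $(\prod_{t\in\mathcal J_l}e^{p\tau_t+\zeta_t}-1)$ times a fixed coordinate vector — and then the limit passage is immediate. Everything else (the block decomposition, the determinant computation, the rank lower bound via an explicit nonvanishing minor, the existence of $Y_l(M)$ and the fact that it is nonzero) is routine and follows the template of Lemma~\ref{lem:Dl} verbatim.
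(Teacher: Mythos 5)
Your proof is correct and follows exactly the route the paper intends: the published proof is only the remark that the argument of Lemma~\ref{lem:Dl} can be adapted, and your write-up is that adaptation, with the one genuinely delicate point (that $M_l$ is a limit of generically nonsingular blocks, so the kernel/range description must be carried through the limit via the explicit identity $y_l(p)^T(\mathcal T_l(p)-C_{h_l}Z_l)=e^{-\zeta_{\max\mathcal J_l}}\bigl(\prod_{t\in\mathcal J_l}e^{p\tau_t+\zeta_t}-1\bigr)e_{h_l}^T$ and the constant nonvanishing $(h_l-1)\times(h_l-1)$ minor) correctly identified and handled.
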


The proof can be obtained by adapting that of Lemma~\ref{lem:Dl}. 

Let  $\tilde Y_l(M)\in \mathbb C^n$ denote the vector obtained from 
$Y_l(M)$ 
by identifying $\mathbb C^{h_l}$ with the subspace of $\mathbb C^n$ corresponding to the indices in $\mathcal J_l$. 
Reasoning as above, we can then obtain the following corollary of Theorem~\ref{Th_exact_con1}.

\begin{corollary}
Assume that $\mathcal G$ is the disjoint union of directed cyclic graphs and that, for $l_1\ne l_2$ in $\llbracket1,L\rrbracket$,
\[\frac{\sum_{j\in \mathcal{J}_{l_1}}\zeta_j}{\sum_{j\in \mathcal{J}_{l_1}}\tau_j}\ne \frac{\sum_{j\in \mathcal{J}_{l_2}}\zeta_j}{\sum_{j\in \mathcal{J}_{l_2}}\tau_j}.\]
Then System~\eqref{eq_gantouh} is $L^1$-exactly controllable 
in time $\tau_{1}+\dotsb+\tau_n$
if and only if 
$\tilde Y_l(M)^T \Gamma\ne 0$ for every $l\in \llbracket1,L\rrbracket$ and every $M\in \overline{H_{\rm net}(\mathbb C)}$ such that the $l$th diagonal block of $M$ is singular. 
\end{corollary}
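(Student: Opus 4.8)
The plan is to derive this corollary from Theorem~\ref{Th_exact_con1} together with the structural description of $H_{\rm net}$ and its closure provided by Lemma~\ref{lem:Dl-closure}, following the same scheme used for the approximate-controllability corollary that precedes it. First I would invoke Theorem~\ref{Th_exact_con1}: since $\mathcal G$ is a disjoint union of directed cycles, Lemma~\ref{lem:rank-iff-cycles} gives $\rank[K,B]=n$, so $L^1$-exact controllability in time $\tau_1+\dots+\tau_n$ is equivalent to Item~\ref{item:EC-1}, namely $\rank[M,B]=n$ for every $M\in\overline{H_{\rm net}(\mathbb C)}$. I would then fix $M\in\overline{H_{\rm net}(\mathbb C)}$ and use that $M=\diag(M_l)_{l=1}^L$ is block-diagonal with $M_l$ the limit of the $l$th block $\mathcal T_l(p_\nu)-C_{h_l}Z_l$ along a suitable sequence $(p_\nu)$. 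Block-diagonality of $M$ and the block-column structure $B=\Gamma$ reduce the condition $\rank[M,B]<n$ to the existence of some $l$ for which $M_l$ is singular and, writing $g=(g_1,\dots,g_L)$ in the corresponding block decomposition, for which there is a nonzero $g_l\in\mathbb C^{h_l}$ with $g_l^T M_l=0$ and $g_l^T\Gamma_l=0$, where $\Gamma_l$ denotes the rows of $\Gamma$ indexed by $\mathcal J_l$ — the other blocks $g_{l'}$ being forced to $0$ by invertibility of $M_{l'}$.

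Next I would apply Item~\ref{item:due-two} of Lemma~\ref{lem:Dl-closure}: when $M_l$ is singular it has rank $h_l-1$ and its range is the orthogonal complement of $Y_l(M)$, so the left kernel of $M_l$ is the line $\mathbb C\,Y_l(M)$. Hence $g_l^T M_l=0$ forces $g_l$ to be a scalar multiple of $\overline{Y_l(M)}$ (or, working with the non-conjugated bilinear pairing used throughout the paper, of $Y_l(M)$ itself), and the remaining condition $g_l^T\Gamma_l=0$ becomes $Y_l(M)^T\Gamma_l=0$, i.e.\ $\tilde Y_l(M)^T\Gamma=0$ after embedding $\mathbb C^{h_l}$ into $\mathbb C^n$. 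Therefore $\rank[M,B]=n$ for all $M\in\overline{H_{\rm net}(\mathbb C)}$ fails exactly when there exist $l$ and $M$ with the $l$th block $M_l$ singular and $\tilde Y_l(M)^T\Gamma=0$; negating, exact controllability holds iff $\tilde Y_l(M)^T\Gamma\neq 0$ for every such $l$ and $M$. The extra hypothesis that the ratios $\frac{\sum_{j\in\mathcal J_{l_1}}\zeta_j}{\sum_{j\in\mathcal J_{l_1}}\tau_j}$ are pairwise distinct is used, via Item~\ref{item:uno-one} of Lemma~\ref{lem:Dl-closure}, to guarantee that for a given limiting sequence at most one block $M_l$ can be singular, so that $V(p)$-type spans are one-dimensional and the criterion reduces cleanly to the scalar nonvanishing condition rather than a transversality condition on a higher-dimensional span; I would spell out that a sequence $(p_\nu)$ realizing singularity of block $l$ has $\Re(p_\nu)$ converging to $-\frac{\sum_{j\in\mathcal J_l}\zeta_j}{\sum_{j\in\mathcal J_l}\tau_j}$, which by distinctness cannot simultaneously make another block singular.

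The main obstacle I anticipate is bookkeeping rather than conceptual: one must be careful that $\overline{H_{\rm net}(\mathbb C)}$ is genuinely the set of block-diagonal matrices whose $l$th block lies in $\overline{\mathcal T_l(\mathbb C)-C_{h_l}Z_l}$ — this needs the observation that along any sequence $p_\nu$ for which all $e^{p_\nu\tau_j}$ converge, each block converges independently, and conversely any tuple of blockwise limits is realized by a common sequence, which may require a diagonal/compactness argument when the $\tau_j$ are irrationally related (using that $\Re(p_\nu)$ must stay bounded, as in the proof of Lemma~\ref{lem:equivs-EC}). A secondary point to handle with care is the identification of the left kernel of the singular block with $\mathbb C\,Y_l(M)$: this is exactly the content of Lemma~\ref{lem:Dl-closure}\eqref{item:due-two}, obtained there by passing to the limit in the explicit kernel computation of Lemma~\ref{lem:Dl}, so I would simply cite it, but I would note explicitly that since $Y_l(M)=\lim_\nu y_l(p_\nu)$ and the components of $y_l(p)$ in \eqref{eq:zl} are products of exponentials, $Y_l(M)$ is never the zero vector, which is what makes the condition $\tilde Y_l(M)^T\Gamma\neq 0$ meaningful. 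Once these points are in place, the corollary follows by the same one-line translation of the Hautus condition that produced the approximate-controllability corollary, and I would present it in that parallel form for brevity.
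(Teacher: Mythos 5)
Your proposal is correct and follows essentially the route the paper intends (it only says ``reasoning as above''): combine Theorem~\ref{Th_exact_con1} with Lemma~\ref{lem:Dl-closure}, use the block-diagonal structure of $M$ and $B=\Gamma$ plus the distinct-ratio hypothesis to ensure at most one singular block, and identify the one-dimensional left kernel of that block with $\mathbb C\,Y_l(M)$, exactly as in the approximate-controllability corollary. Your preliminary appeal to Lemma~\ref{lem:rank-iff-cycles} and the worry about $\overline{H_{\rm net}(\mathbb C)}$ being a product of blockwise closures are superfluous (the criterion quantifies directly over $M$ in the closure, and the rank condition on $[K,B]$ is already implied by Item~\ref{item:EC-1}), but they do not affect correctness.
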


\bibliographystyle{abbrv} 
\bibliography{ifacconf1}

\appendix

\section{Appendix}

We prove in this appendix the following statement, needed in the proof of Theorem~\ref{Th_exact_con1}.
Its proof follows that of Theorem~5.13 in \cite{ChitourHautus}.

\begin{proposition}
\label{prop_fond_exact_controllability_L1}
If System~\eqref{eq:hyperbolic} is $L^1$-exactly controllable in time 
$T_\ast = \tau_1 + \dotsb + \tau_n$
then there exist two matrices $R$ and $S$ with entries in $M(\mathbb{R}_{-})$ such that \eqref{eq:bezout} holds true.
\end{proposition}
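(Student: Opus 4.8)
The plan is to carry the problem into the realization-theoretic language of Section~\ref{sec:control_delay_system} and then to reproduce, with the bookkeeping changes forced by the kernel $Q=\diag(\delta_{-\tau_1},\dots,\delta_{-\tau_n})-K\delta_0$, the input symbol $P=B\delta_0$, and the product state space, the argument proving necessity of a B\'ezout identity in \cite[Theorem~5.13]{ChitourHautus}; the fact that $H=\widehat Q$ here lightens several of its steps. First I would reduce to a finite-time statement: by Proposition~\ref{prop:control-equiv}, $L^1$-exact controllability of \eqref{eq:hyperbolic} in time $T_\ast$ is equivalent to that of \eqref{eq_generic_delay}, hence by Proposition~\ref{prop:first_charact_control1} to $\Ran E_1(T_\ast)=\Sigma^1$, and by Theorem~\ref{th_appr_minimal_time} also to exact controllability from the origin. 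In the input--output form \eqref{eq:input-state}--\eqref{eq:output-z}, this says that the reachability map $\Omega^1\ni u\mapsto z\in X^{Q,1}$ is onto, and, again by Theorem~\ref{th_appr_minimal_time}, already onto with inputs supported in $[-T_\ast,0]$; by the open mapping theorem there is thus $C>0$ such that each $\phi\in X^{Q,1}\cong\Sigma^1$ is the output of some $u$ supported in $[-T_\ast,0]$ with $\|u\|_{L^1}\le C\|\phi\|_{\Sigma^1}$.

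Next I would build $R$ and $S$ column by column: it suffices to produce, for every $j\in\llbracket1,n\rrbracket$, vectors of measures $R_j$ with entries in $M(\mathbb R_-)$ (with values in $\mathbb R^n$) and $S_j$ with entries in $M(\mathbb R_-)$ (with values in $\mathbb R^m$) satisfying $Q\ast R_j+P\ast S_j=\delta_0 e_j$, and then set $R=[R_1,\dots,R_n]$, $S=[S_1,\dots,S_n]$. Let $Q^{-1}$ be the convolution inverse of $Q$ in the algebra of matrix-valued Radon measures with left-bounded support; it is the locally finite sum of Dirac masses on $\mathbb R_+$ built from the matrices $\Xi_\ell$ of Definition~\ref{def_Xi_n}. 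For a scalar $L^1$ function $\theta$ with compact support in $(-\infty,0)$, write $Q^{-1}\ast(\theta e_j)=\phi_{\theta,j}+w_{\theta,j}$ with $\phi_{\theta,j}$ supported in $\mathbb R_+$ and $w_{\theta,j}$ supported in $(-\infty,0)$; since $Q\ast(\phi_{\theta,j}+w_{\theta,j})=\theta e_j$ with both $\theta e_j$ and $Q\ast w_{\theta,j}$ supported in $(-\infty,0)$, one gets $\pi(Q\ast\phi_{\theta,j})=0$, i.e.\ $\phi_{\theta,j}\in X^{Q,1}$. By exact controllability in time $T_\ast$, $\phi_{\theta,j}$ is the output of some $u_{\theta,j}\in\Omega^1$ supported in $[-T_\ast,0]$; the resulting relation $\pi\bigl(Q^{-1}\ast(Bu_{\theta,j}-\theta e_j)\bigr)=0$, together with the supports of $Q^{-1}$, $u_{\theta,j}$ and $\theta$, yields a compactly supported $\rho_{\theta,j}$ with support in $\mathbb R_-$ such that
\[
Q\ast\rho_{\theta,j}+P\ast u_{\theta,j}=\theta e_j .
\]
All objects produced here have compact support contained in $\mathbb R_-$, since inputs live on the fixed interval $[0,T_\ast]$.

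The final step is to let $\theta$ run along an approximate identity concentrating at $0$ and to pass to the limit, upgrading the above identities to $Q\ast R_j+P\ast S_j=\delta_0 e_j$ with $R_j,S_j$ having entries in $M(\mathbb R_-)$, which is precisely how \cite[Theorem~5.13]{ChitourHautus} proceeds. I expect this limiting step to be the main obstacle: exact controllability only provides existence (with the open-mapping bound) of a preimage $u_{\theta,j}$ for each target, whereas the natural targets $\phi_{\theta,j}$ degenerate toward a Radon measure carrying an atom at $\tau_j$ as $\theta\to\delta_0$, so their $\Sigma^1$-norms blow up and the limit cannot be taken by soft compactness in $\Sigma^1$. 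One must instead exploit the pseudorational structure of $Q$ and $P$ --- finitely many Dirac masses with supports in $\mathbb R_-$, so that $\widehat Q=H$ and $\widehat P=B$ are entire of exponential type --- together with the finite-time property, as in \cite[Theorem~5.13]{ChitourHautus}, to first establish \eqref{eq:bezout} modulo a remainder and then absorb that remainder.

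The remaining ingredients are routine and are where the announced notational adaptations relative to \cite{ChitourHautus} enter: the identification $X^{Q,1}\cong\Sigma^1=\prod_{i=1}^n L^1([-\tau_i,0],\mathbb R)$ (via restriction to $[0,\tau_i]$ and a translation in each component), the verification that the convolution manipulations above respect this product structure, and the bookkeeping for the matrix-valued measures $R$ and $S$.
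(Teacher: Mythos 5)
Your setup (reduction via Propositions~\ref{prop:control-equiv} and \ref{prop:first_charact_control1} and Theorem~\ref{th_appr_minimal_time}, the open mapping theorem for the reachability map with inputs supported in $[-T_\ast,0]$, and the column-wise relation $Q\ast\rho_{\theta,j}+P\ast u_{\theta,j}=\theta e_j$) is sound and parallels the paper's argument. But the proof is not complete: you explicitly leave open the limiting step $\theta\to\delta_0$, which is precisely the heart of the matter, and you replace it with an unsubstantiated appeal to ``pseudorational structure'' and to establishing \eqref{eq:bezout} ``modulo a remainder'' to be ``absorbed'' --- no mechanism for either is given. Moreover, your diagnosis of the obstruction is incorrect: the $\Sigma^1$-norms of the targets $\phi_{\theta,j}$ do \emph{not} blow up. Since $Q^{-1}$ is a locally finite sum of Dirac masses, for an approximate identity $\theta$ with $\norm{\theta}_{L^1}=1$ one has $\norm{(\phi_{\theta,j})_i}_{L^1([0,\tau_i])}\le \abs{(Q^{-1})_{ij}}([0,\tau_i+\epsilon])$, uniformly in $\theta$ (mollifying an atom preserves its mass; it is the $L^\infty$ norm, not the $L^1$ norm, that degenerates). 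Consequently the open-mapping bound yields controls $u_{\theta,j}$ uniformly bounded in $L^1([-T_\ast,0],\mathbb R^m)$, and the correct ``soft compactness'' is weak-star compactness of bounded sets of Radon measures applied to the \emph{controls}: a subsequence of $u_{\theta,j}$ converges distributionally to a measure $S_j$ supported in $[-T_\ast,0]$, and one passes to the limit in the convolution identity.

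This is exactly how the paper proceeds, in a matrix form: it approximates $\pi(Q^{-1})$ by a sequence $\psi_k\in (X^{Q,1})^n$ (via a lemma of Yamamoto), observes that weak-star convergence forces a uniform bound on the total variations, hence $\sup_k\norm{\psi_{k,j}}_{\widetilde\Sigma^1}<\infty$, applies the open mapping theorem to get controls $S_k$ uniformly bounded in $L^1$, extracts a weak-star limit $S$ with entries in $M(\mathbb R_-)$, and obtains $\pi(Q^{-1}\ast P\ast S)=\pi(Q^{-1})$ by continuity of convolution and of the truncation $\pi$. It then sets $R:=Q^{-1}-Q^{-1}\ast P\ast S$; the identity $\pi(R)=0$ together with a second lemma of Yamamoto shows that $R$ has compact support in $(-\infty,0]$ and entries of order zero, and convolving on the left by $Q$ gives \eqref{eq:bezout}. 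You would also need this last step (support, compactness, and order-zero property of $R$, for which $\pi(R)=0$ alone is not enough), which your sketch does not address. As written, the proposal identifies the right skeleton but fails exactly at the two nontrivial points --- the compactness argument producing $S$ as a measure, and the structure argument producing $R$ --- so it does not constitute a proof.
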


\begin{proof}
Let $\mathcal{D}'_{+}(\mathbb{R})$ be the space of distributions having 
left-bounded
support, which becomes an algebra when endowed with the convolution product~$*$. Then the distribution $Q$ is invertible over $\mathcal D^\prime_+(\mathbb R)$ with respect to the convolution product, and its inverse is the matrix-valued Radon measure 
\[Q^{-1}=\diag(\delta_{\tau_1},\dots,\delta_{\tau_n})*   \sum_{k=0}^{+\infty} \left(   \sum_{j=1}^n K\diag(\delta_{\tau_1},\dots,\delta_{\tau_n}) \right)^{\ast k} 
,\]
where $M^{\ast k}$ denotes the convolution product of a matrix-valued Radon measure $M$ repeated $k$ times, with the convention $M^{* 0} = I_n \delta_0$. 

Let us define the map $\widetilde{G}\colon \widetilde{\Omega}^1 \longrightarrow \widetilde \Sigma^1$ by
\[(\widetilde{G}(u))_j (t)=(\pi(Q^{-1}*P*u))_j (t),\qquad j \in \llbracket 1, n\rrbracket,\; t\in [0,\tau_j],\; u \in \widetilde{\Omega}_1,\]
where $\widetilde \Sigma^1 = \prod_{j=1}^n L^1([0,\tau_j], \mathbb{R})$ and
  $\widetilde{\Omega}^1$ denotes the subspace of 
$\Omega^1$ made of inputs with 
compact support in $[-T_\ast,0]$ endowed with the norm $\|\cdot\|_{[-T_\ast,0],1}$. Firstly, we can see that the map $\widetilde{G}$ is a bounded linear operator because $Q^{-1}*P$ is a distribution with a finite number of Dirac distributions on each compact interval of $\mathbb{R}$. Secondly, note that System~\eqref{eq:hyperbolic} is $L^1$-exactly controllable in time $T_\ast$ if and only if the map $\widetilde{G}$ is surjective. We can now apply 
the open mapping theorem (see, e.g, \cite[Theorem~4.13]{rudin1991functional}) and deduce that there exists $\delta>0$ such that 
\begin{equation}
\label{eq:open_map_thereom}
\widetilde{G}(U) \supset \delta V,
\end{equation}
where $U$ and $V$ are the open unit balls of $\widetilde{\Omega}^1$ and $\widetilde \Sigma^1$ respectively.

Since 
$\pi(Q*\pi (Q^{-1}))=\pi(\delta_0)=0$ and the inclusion $X^{Q,2} \subset X^{Q,1}$ holds true, \cite[Lemma~4.3]{YamamotoRealization} implies that there exists a sequence 
$\psi_k=(\psi_{k,1},\dotsc,\psi_{k,n}) \in (X^{Q,1})^n$, $k \in \mathbb{N}$, such that $\psi_k \rightarrow \pi (Q^{-1})$ in the distributional sense as $k\to\infty$. Hence, for $i,j \in \llbracket 1, n\rrbracket$ and $k \in \mathbb N$, if we define $\left(Q^{-1}_{k}\right)_{i,j}$ as the Radon measure which is absolutely continuous with respect to the Lebesgue measure with density $\left(\psi_{k}\right)_{i,j}$,
we get that $\left(Q^{-1}_k\right)_{i,j}$ weak-star converges to $\left(\pi(Q^{-1})\right)_{i,j}$ in the sense of \cite[Paragraph~4.3]{maggi2012sets} as $k \to +\infty$. By \cite[Remark~4.35]{maggi2012sets}, we obtain that the total variation of $\left(Q^{-1}_k\right)_{i,j}$ over $[0, \tau_i]$, given by
\begin{equation*}
\left|\left(Q^{-1}_k\right)_{i,j}\right|([0,\tau_i]):= \int_0^{\tau_i} \left|\left(\psi_k\right)_{i,j}(t)\right| dt
\end{equation*}
is uniformly bounded with respect to $k \in \mathbb N$, i.e.,
\begin{equation*}
    \sup_{k \in \mathbb{N}}\left|\left(Q^{-1}_k\right)_{i,j}\right|([0,\tau_i])<\infty.
\end{equation*}
Therefore, 
there exists $C>0$ such that
\begin{equation*}
    \|\psi_{k,j}\|_{\widetilde \Sigma^1} \le C,\qquad j \in \llbracket 1,n\rrbracket,\ \ k \in \mathbb{N}.
\end{equation*}
Let $M'>0$ be such that $\delta M' >C$. We get from Equation~\eqref{eq:open_map_thereom} that
\begin{equation*}
\widetilde{G}(M'U) \supset \delta M'V
\end{equation*}
so that, for every $j \in \llbracket 1,n\rrbracket$ and $k \in \mathbb{N}$, there exists $S_{k,j} \in \widetilde{\Omega}_1$ such that
\begin{equation*}
    \widetilde{G}(S_{k,j})= \psi_{k,j} \quad \mbox{and} \quad  \|S_{k,j}\|_{[-T_\ast,0],\,1} \le M'.
\end{equation*}

Set $S_k=(S_{k,1},\dots,S_{k,n})$. 
 By construction, 
 $S_k \in {\widetilde{\Omega}}^1$ and 
\begin{equation}
\label{eq:bezout1}
\pi(Q^{-1}*P*S_k) \rightarrow \pi(Q^{-1}) \qquad \mbox{as $k\to\infty$},
\end{equation}
in distributional sense. Since the columns of $S_k$, for $k \in\mathbb{N}$, are uniformly bounded for the norm in $\widetilde{\Omega}^1$, by the weak compactness of Radon measures (see for instance \cite[Theorem 4.33]{maggi2012sets}), 
there exists a matrix $S$ with entries in $M(\mathbb{R}_-)$ 
such that, up to extracting a subsequence,  
$\lim_{k \rightarrow + \infty}S_k = S$ in distributional sense. Since the convolution is continuous in distributional sense and $\pi$ is continuous with respect to the strong dual topology, we deduce from Equation~\eqref{eq:bezout1} that
\begin{equation}
\label{eq:bezout2}
\pi(Q^{-1}*P*S)
=\pi (Q^{-1}).
\end{equation}

Let
 \begin{equation}
 \label{eq:bezout3}
  R:= Q^{-1}-Q^{-1}*P*S.
   \end{equation}
By Equation~\eqref{eq:bezout2}, we have $\pi(R)=0$, which, together with \cite[Lemma~A2]{YamamotoRealization}, implies that the support of $R$ is compact and contained in $(-\infty,0]$. Moreover, the entries of $R$ are distributions of order zero, since the convolution of distributions of order zero remain a distribution of order zero. Equation~\eqref{eq:bezout} is then obtained by left convoluting Equation~\eqref{eq:bezout3}  by $Q$.
\end{proof}

\end{document}